\documentclass[12pt, a4paper]{article}
\usepackage[utf8]{inputenc}
\usepackage[T1]{fontenc}
\usepackage{amsmath, amsthm, amsfonts, tikz, graphicx}
\usepackage[pdftex, colorlinks=true, linkcolor=blue, citecolor=blue, urlcolor=blue]{hyperref}

\newtheorem{theorem}{Theorem}[section]
\newtheorem{lemma}{Lemma}[section]
\newtheorem{corollary}{Corollary}[section]
\newtheorem{remark}{Remark}[section]
\newcommand{\x}{\boldsymbol{x}}

\title{Optimized Schwarz methods for heterogeneous heat transfer problems}

\author{Martin J. Gander\textsuperscript{1}, Liu-Di Lu\textsuperscript{1}, Tingting Wu\textsuperscript{2,3}}

\date{%
\small\textit{\textsuperscript{1}Section de Mathématiques, Université de Genève, rue du Conseil-Général 5-7, CP 64, 1205, Geneva, Switzerland}\\
\small\textit{\textsuperscript{2}School of Intelligent Equipment Engineering, Wuxi Taihu University, 214064, Wuxi, China}\\
\small\textit{\textsuperscript{3}State Key Laboratory of Mechanics and Control For Aerospace Structures, Nanjing University of Aeronautics and Astronautics, 210016, Nanjing, China}
}

\begin{document}

\maketitle

\begin{abstract}
We present here nonoverlapping optimized Schwarz methods applied to heat transfer problems with heterogeneous diffusion coefficients. After a Laplace transform in time, we derive the error equation and obtain the convergence factor. The optimal transmission operators are nonlocal, and thus inconvenient to use in practice. We introduce three versions of local approximations for the transmission parameter, and provide a detailed analysis at the continuous level in each case to identify the best local transmission conditions. Numerical experiments are presented to illustrate the performance of each local transmission condition. As shown in our analysis, local transmission conditions, which are scaled appropriately with respect to the heterogeneous diffusion coefficients, are more efficient and robust especially when the discontinuity of the diffusion coefficient is large.  
\end{abstract}

% REQUIRED
\textbf{Keywords}: domain decomposition, optimized Schwarz methods, heterogeneous heat equation, waveform relaxation, convergence analysis.

\section{Introduction}
Hypersonic vehicles often travel at speeds exceeding five times of the speed of sound, and due to this extreme speed, these vehicles are exposed to high aerodynamic and thermal loads~\cite{Anderson1989}. To ensure the safety of the vehicle, thermal protection structures must be designed and applied on the outer surface of the vehicle such that the inner structural temperature can stay in a sustainable range~\cite{Kumar2017}. Hence, it is vital to study the heat transfer problems in these critical areas to obtain the temperature of the vehicle. A typical illustration of thermal protection structures is shown in Figure~\ref{fig:illustration}. 
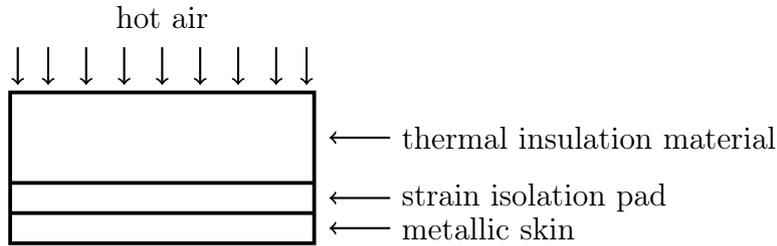
\begin{figure}
\centering
\begin{tikzpicture}
\draw[line width=0.5mm] (0,  0) rectangle (4,  2);
\draw[line width=0.5mm] (0,  0.4) -- (4,  0.4);
\draw[line width=0.5mm] (0,  0.8) -- (4,  0.8);
\draw[thick,  <-] (4.2,  0.2) -- (5,  0.2) node[right] {metallic skin};
\draw[thick,  <-] (4.2,  0.6) -- (5,  0.6) node[right] {strain isolation pad};
\draw[thick,  <-] (4.2,  1.4) -- (5,  1.4) node[right] {thermal insulation material};
\foreach \x in {0.1,  0.5,  1,  1.5,  2,  2.5,  3,  3.5,  3.9} {\draw[thick, ->] (\x,  2.6) -- (\x,  2.1);}
\node at (2,  3) {hot air};
\end{tikzpicture}
\caption{Illustration of thermal protection systems.}
\label{fig:illustration}
\end{figure}
Depending on the thermal protection techniques, several layers of materials can be applied over the vehicle skin, see e.g.~\cite{Uyanna2020} for a review. Each layer of the thermal protection structures may consist of different materials, such as aluminum and ceramic~\cite{Kumar2016}, and the diffusion coefficients can be very different from one material to another.

Numerical methods such as the finite element method and the boundary element method are often used to study such heat transfer problems, yielding reliable results~\cite{Zhang2019,Feng2016}. However, simulating heat transfer across various materials for critical areas of the vehicle can be time consuming. In~\cite{Girault2005,Girault20052}, the Reduced Models method is used to solve a nonlinear heat conduction problem, which drastically reduces the computing time. Given the geometric structure presented in Figure~\ref{fig:illustration}, nonoverlapping domain decomposition methods are natural candidates to introduce parallelism and accelerate the numerical solution of heat transfer problems with heterogenous diffusion coefficients. In~\cite{Divo2003}, the authors developed a domain decomposition, or artificial subsectioning technique, along with a boundary--element method, to solve such heat conduction problems, showing the potential of  domain decomposition.

The idea of domain decomposition was initially introduced by Hermann Amandus Schwarz in~\cite{Schwarz1870} to prove rigorously the existence of solution for Laplace problems. His method has then been developed as a computational tool with the arrival of parallel computing, see e.g.~\cite{Gander2008} for a historical review. Unlike dealing with homogeneous heat transfer problems where a continuous diffusion function is considered over the entire domain, the heterogeneity of the material between two subdomains require special attention for heterogeneous heat transfer problems. In~\cite{Maday2007, Dubois2015}, optimized Schwarz methods are analyzed for solving heterogeneous Laplace problems. A reaction--diffusion problem with heterogenous coefficients is studied in~\cite{Gander2019}. In~\cite{Gerardo-Giorda2005}, the authors consider using optimized Schwarz methods for solving unsymmetric advection--diffusion--reaction problems with strongly heterogenous and anisotropic diffusion coefficients. The balancing Neumann--Neumann method is applied in~\cite{Goldfeld2003} to treat linear elasticity systems with discontinuous coefficients. In~\cite{Gander2016}, the authors extend the study to parabolic heat transfer problems with a constant diffusion coefficient using Dirichlet--Neumann and Neumann--Neumann waveform relaxation methods. Optimized Schwarz waveform relaxation methods are considered in~\cite{Lemarie2013part1, Lemarie2013part2} to solve heterogeneous heat transfer problems. More recently, the authors in~\cite{Birken2024} analyzed at the continuous level of the Dirichlet--Neumann waveform relaxation method applied to heterogeneous heat transfer problems.

In the current study, we focus on the optimized Schwarz waveform relaxation methods to solve heat transfer problems with heterogeneous diffusion coefficients. It has already been observed in~\cite{Lemarie2013part1, Lemarie2013part2} that the optimal transmission operators are nonlocal in time, and thus are inconvenient to use in practise. For this reason, we introduce here three local approximations of the transmission operators by taking into account the heterogenous diffusion coefficients. As these local approximations are scaled differently with respect to the diffusion coefficients, we analyze in detail the min-max problem associated with each approximation and find analytical formulas for the optimized local transmission parameters. In particular, we show that the equioscillation property does not always lead to the best transmission parameters, as reported also in~\cite{Gander2016}. Thus, one needs to be careful when addressing the min-max problems to characterize the best transmission parameters. In addition, we also show the importance of using a good scaling to be able to derive an efficient and robust solver in the case of a largely heterogeneous media.

Our paper is organized as follows: in Section~\ref{sec:2}, we introduce the heterogeneous heat transfer problem and optimized Schwarz methods. A Laplace analysis is applied to the error equations to determine the convergence factor. In Section~\ref{sec:3}, we introduce three local approximations of the optimal transmission operators and provide a detailed analysis of each associated min-max problem. Numerical experiments are presented in Section~\ref{sec:4} to illustrate the performance of these local transmission conditions.

\section{Model problem}\label{sec:2}
To model the heat transfer between different materials as shown in Figure~\ref{fig:illustration}, we consider the heterogeneous heat equation
\begin{equation}\label{eq:heat}
\begin{aligned}
\partial_t u &= \nabla \cdot (\nu \nabla u)  + f  &&\text{ in } Q:=\Omega \times(0,  T), 
\\
u &= u_0 &&\text{ on } \Sigma_0:=\Omega\times\{0\}, 
\\
u &= g &&\text{ on } \Sigma:=\partial\Omega\times(0,  T), 
\end{aligned}
\end{equation}
where $\Omega\subset\mathbb{R}^d$,  $d=1,  2,  3$, with its boundary $\partial\Omega$, $T$ is the fixed final time, $\nu$ is the heat diffusion function, $f$ is the source term, $u_0$ is the initial condition, and $g$ represents some Dirichlet boundary conditions. Furthermore, we consider a natural decomposition of two nonoverlapping subdomains $\Omega_1$ and $\Omega_2$ such that $\Omega_1\cap\Omega_{2} = \Gamma$, with $\Gamma$ the interface between $\Omega_1$ and $\Omega_2$, as shown in Figure~\ref{fig:HeatJump}. The heat diffusion function $\nu$ is assumed to be a piecewise constant function in space, where $\nu (\x) =  \nu_j$ for $\x \in\Omega_j$ with $\nu_j>0$, $j=1, 2$. For the sake of brevity, we will omit the initial and boundary conditions in the following.
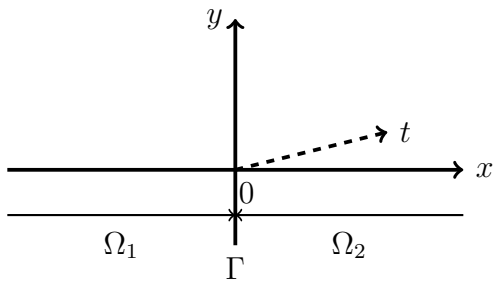
\begin{figure}
\centering
\begin{tikzpicture}
\draw[line width=0.5mm, ->] (-3,  0)  -- (3,  0)  node[right] {$x$};
\draw[line width=0.5mm, ->] (0, -1) -- (0, 2) node[left] {$y$};
\draw[line width=0.5mm, dashed, ->] (0, 0) -- (2, 0.5) node[right] {$t$};
\node at (0.15, -0.3) {0};
\draw[thick, ->] (-3, -0.6) -- (0, -0.6);
\node at (-1.5, -1) {$\Omega_1$};
\draw[thick, ->] (3, -0.6) -- (0, -0.6);
\node at (1.5, -1) {$\Omega_2$};
\node at (0, -1.3) {$\Gamma$};
\end{tikzpicture}
\caption{2D illustration of the decomposition.}
\label{fig:HeatJump}
\end{figure}

The following physical coupling conditions are applied on the interface 
\[u_1 = u_2, \quad  \nu_1 \partial_{\mathbf{n}_1}u_1= -\nu_2 \partial_{\mathbf{n}_2}u_2, \quad \text{ on } \Sigma:=\Gamma\times(0, T),\]
to ensure the continuity of the solution and its normal flux between the subdomains. Here, the unit outward normal vector is denoted by $\mathbf{n}_j$. According to these two physical coupling conditions, we can write the optimized Schwarz method as: for the iteration index $k = 1, 2, \ldots$, one solves
\begin{equation}\label{eq:OS}
\begin{aligned}
\partial_t u_1^{k+1}  &= \nu_1 \Delta u_1^{k+1} + f_1  &&\text{ in }  Q_1,
\\
\big(\nu_1 \partial_{\mathbf{n}_1}+S_1\big) u_1^{k+1} &= \big(\nu_{2} \partial_{\mathbf{n}_1}+S_{1}\big) u_{2}^k  && \text{ on } \Sigma, 
\\
\partial_t u_{2}^{k+1}  &= \nu_{2} \Delta u_{2}^{k+1} + f_2  &&\text{ in }  Q_2,
\\
\big(\nu_{2} \partial_{\mathbf{n}_2}-S_2\big) u_{2}^{k+1} &= \big(\nu_{1} \partial_{\mathbf{n}_2}-S_2\big) u_{1}^{k+1}  && \text{ on } \Sigma, 
\end{aligned}
\end{equation}
with $Q_j:=\Omega_j\times(0, T)$, $j=1, 2$. The system~\eqref{eq:OS} is then completed by the given initial and boundary conditions of the problem~\eqref{eq:heat}. Here, $f_j$ denotes the source term $f$ restricted to the space-time domain $Q_j$, and $S_j$ is a linear space-time operator. As illustrated in Figure~\ref{fig:HeatJump}, the decomposition is only in the $x$-direction, we thus consider in the following the one dimensional case, i.e., $\Omega=\mathbb{R}$, to focus on the transmission condition at the interface $x=0$. This will simplify the computations and allow us to obtain a more compact analytical form. In this case, the two space-time subdomains are $Q_1=(-\infty, 0)\times(0, T)$ and $Q_2=(0, \infty)\times(0, T)$, and the linear operator $S_j$ is only related to the time variable. Although the following convergence analyses are for the two-subdomain case only, our numerical experiments in Section 4 for multiple subdomains with different choices of the diffusion coefficient $\nu$ show that our theoretical results are also very useful in more general situations.

\subsection{Laplace Analysis} 
To understand the convergence behavior of the optimized Schwarz algorithm~\eqref{eq:OS}, we will study the associated error equations with solutions which go to zero when $x$ goes to infinity. We denote the error by $e_j^k(\x, t):=u(\x, t)-u_j^k(\x, t)$, $j=1, 2$, which satisfies by linearity the equation
\[\partial_t e_j^{k} = \partial_t \big(u -u_j^k\big)
=\nu_j \Delta \big(u- u_j^k\big)
=\nu_j\Delta e_j^{k} \quad \text{ in } Q_j.\]
To focus on the transmission condition in space at the interface $\Gamma$, we apply a Laplace transform in the time variable $t$, 
\[\hat{e}_j^k(\x,  s) := \mathcal{L}\{e_j^k(\x,  t)\} 
= \int _{0}^{\infty} e_j^k(\x,  t) e^{-st} \,  \mathrm{d}t,\]
where $s\in\mathbb{C}$ is a complex number. We study the associated error equation of~\eqref{eq:OS} after the Laplace transform, that is,
\begin{equation}\label{eq:OSLaplace}
\begin{aligned}
s \hat{e}_1^{k+1}(x,  s) &= \nu_1 \partial_{xx} \hat{e}_1^{k+1}(x,  s)  && \text{in} \ Q_1, \\
\big(\nu_1 \partial_x + \sigma_1(s)\big) \hat{e}_1^{k+1}(0,  s) &= \big(\nu_2 \partial_x + \sigma_1(s)\big) \hat{e}_2^{k}(0,  s),  \\
s \hat{e}_2^{k+1}(x,  s) &= \nu_2 \partial_{xx} \hat{e}_2^{k+1}(x,  s)  && \text{in} \ Q_2, \\
\big(\nu_2 \partial_x - \sigma_2(s)\big) \hat{e}_2^{k+1}(0,  s) &=\big(\nu_1 \partial_x -\sigma_2(s)\big)  \hat{e}_1^{k+1}(0,  s),
\end{aligned}
\end{equation} 
where $\sigma_j(s)$ are the Laplace symbols of the operators $S_j$. The general solutions are given by
\[\hat{e}_1^{k+1}(x,  s) = C_1^{k+1}(s) \hat{e}^{\frac{\sqrt{s}}{\sqrt{\nu_1}} x}, 
\quad 
\hat{e}_2^{k+1}(x,  s) = C_2^{k+1}(s) \hat{e}^{-\frac{\sqrt{s}}{\sqrt{\nu_2}} x}.\]
Applying the transmission conditions in~\eqref{eq:OSLaplace},  we obtain the convergence factor for $\{\hat{e}_j^{k}\}_{k=1,  2,  \ldots}$
\begin{equation}\label{eq:rhoehat}
\rho(s, \sigma_1,  \sigma_2) := 
\left| 
\frac{\sigma_1(s)-\sqrt{\nu_2} \sqrt{s}} 
{\sigma_1(s)+\sqrt{\nu_1} \sqrt{s}}  
\cdot  
\frac{\sigma_2(s)-\sqrt{\nu_1} \sqrt{s}} 
{\sigma_2(s)+\sqrt{\nu_2} \sqrt{s}} 
\right|.  	
\end{equation}
It is straightforward from~\eqref{eq:rhoehat} that we can get optimal convergence by choosing
\begin{equation}\label{eq:sigopt}
\sigma_1(s) = \sqrt{\nu_2} \sqrt{s},  
\quad 
\sigma_2(s) = \sqrt{\nu_1} \sqrt{s}.
\end{equation}
This leads to convergence in two iterations,  since the errors at iteration $k=2$ vanish. However, the best choice is nonlocal in time due to the term $\sqrt{s}$, and it is expensive to compute and inconvenient for the implementation. Therefore, the goal of the current study is to find good local approximations of $\sigma_j(s)$ that can still give fast convergence.

\section{Approximation of the optimal operators}\label{sec:3}
The idea is to fix a class of possible transmission conditions $\mathcal{C}$ and uniformly optimize the convergence factor over a range of frequencies for our problem. This corresponds to solve the min-max problem 
\begin{equation}\label{eq:minmax}
\min_{\sigma_j \in \mathcal{C}} \left(\max_{ s } \rho (s, \sigma_1, \sigma_2) \right).
\end{equation}
To find local approximations of $\sigma_j(s)$, we consider in the following $\sigma_j \in \mathbb{R}$, independent of the time variable. In this way, the convergence factor~\eqref{eq:rhoehat} becomes
\begin{equation}\label{eq:rhoehatR}
\rho(s, \sigma_1,  \sigma_2) := 
\left| 
\frac{\sigma_1-\sqrt{\nu_2} \sqrt{s}}
{\sigma_1+\sqrt{\nu_1} \sqrt{s}}  
\cdot  
\frac {\sigma_2 -\sqrt{\nu_1} \sqrt{s}}
{\sigma_2 + \sqrt{\nu_2} \sqrt{s}} 
\right|.  
\end{equation}
For the Laplace transform, we have $s=\eta +i\omega$ with $\eta,  \omega \in \mathbb{R}$. This implies that
\[\sqrt {s}=\sqrt{\eta +i\omega }
=\sqrt{\frac{\eta+\sqrt{\eta^2+\omega^2}}{2}} \pm i \sqrt{\frac{-\eta+\sqrt{\eta^2+\omega^2}}{2}}.\]
Since $\sqrt{s}$ is an even function of the imaginary part $\omega$, the convergence factor $\rho$ is also an even function of $\omega$. Therefore, we only consider $\omega \geq 0$ in the analysis. Now the imaginary part $\omega=0$ corresponds to a constant function in time, and since the error function $e_j^k(x, t)$ equals zero at $t=0$,  the constant function cannot be part of the error function in the iteration. From a numerical viewpoint, when solving the problem in the time interval $[0, T]$, we can heuristically state that $\omega\in[\omega_{\min}, \omega_{\max}]$, where the smallest frequency $\omega_{\min}$ is $\frac{\pi}{2T}$, and the largest frequency is related to the time step $\Delta t$, that is $\omega_{\max}=\frac{\pi}{\Delta t}$. We refer to~\cite[Figure 3.17]{Gander2024} for more details about this statement. Thus, we can set $\eta=0$ as we only solve the min-max problem~\eqref{eq:minmax} away from $\omega=0$. Denoting by $\widetilde{\omega}:=\sqrt{\frac{\omega}{2}}$, we get
\[\sqrt {s}=\sqrt{\frac{\omega}{2}} \pm i\sqrt{\frac{\omega}{2}} 
=\widetilde{\omega} \pm i\widetilde{\omega}. \]
The new parameter $\widetilde{\omega} \in [\widetilde{\omega}_1, \widetilde{\omega}_2]$ with $\widetilde{\omega}_1:=\sqrt{\frac{\omega_{\min}}{2}}=\sqrt{\frac{\pi}{4T}}$ and $\widetilde{\omega}_2:=\sqrt{\frac{\omega_{\max}}{2}}=\sqrt{\frac{\pi}{2\Delta t}}$. The convergence factor~\eqref{eq:rhoehatR} can then be simplified to 
\begin{equation}\label{eq:rhoehatRapprox}
\rho(\widetilde{\omega}, \sigma_1, \sigma_2) = 
\sqrt{ 
\frac{(\sigma_1-\sqrt{\nu_2} \widetilde{\omega})^2+\nu_2 \widetilde{\omega}^2 }
{(\sigma_1+\sqrt{\nu_1}\widetilde{\omega})^2+\nu_1\widetilde{\omega}^2} 
\cdot 
\frac{(\sigma_2-\sqrt{\nu_1} \widetilde{\omega})^2+\nu_1 \widetilde{\omega}^2 }
{(\sigma_2+\sqrt{\nu_2}\widetilde{\omega})^2+\nu_2\widetilde{\omega}^2} 
}.
\end{equation}
To find good local operators, we can restrict the range of $\sigma_j$. More precisely, suppose $ \sigma_1>0$ and substitute $\sigma_1$ by $-\sigma_1$ in~\eqref{eq:rhoehatRapprox}, we have
\[\rho(\widetilde{\omega}, -\sigma_1, \sigma_2) = 
\sqrt{
\frac{(\sigma_1 + \sqrt{\nu_2} \widetilde{\omega})^2 + \nu_2 \widetilde{\omega}^2 }
{(\sigma_1 - \nu_1 \widetilde{\omega}^2) + \nu_1\widetilde{\omega}^2}  
\cdot  
\frac{(\sigma_2 - \sqrt{\nu_1} \widetilde{\omega})^2 + \nu_1 \widetilde{\omega}^2} 
{(\sigma_2 + \sqrt{\nu_2}\widetilde{\omega})^2 + \nu_2 \widetilde{\omega}^2} }.\]
This implies that $\rho(\widetilde{\omega}, -\sigma_1, \sigma_2) >\rho(\widetilde{\omega}, \sigma_1, \sigma_2)$, when $ \sigma_1>0 $. Therefore, for fast convergence, $\sigma_1>0$ should be chosen. In a similar way, we can restrict the range of $\sigma_2$ to $\sigma_2 >0$. The min-max problem~\eqref{eq:minmax} thus becomes
\begin{equation}\label{eq:P}
\min_{\sigma_j >0} \left(\max_{\widetilde{\omega}_1 \leq \widetilde{\omega} \leq \widetilde{\omega}_2} \rho(\widetilde{\omega}, \sigma_1, \sigma_2) \right). \tag{P}
\end{equation}
Before analyzing the convergence of several choices for local transmission parameters $\sigma_j$, we give sufficient conditions on $\sigma_j$ that will guarantee convergence of the optimized Schwarz algorithm~\eqref{eq:OSLaplace}.

\begin{theorem}[Sufficient condition]\label{thm:SuffCond} 
Under the conditions 
\[\begin{aligned}
&0 < \sigma_2 \leq \sigma_1, &&\text{if} \ \nu_1 < \nu_2, 
\\
&0 < \sigma_1 \leq \sigma_2, &&\text{if} \ \nu_2 < \nu_1, 
\end{aligned}\]
the optimized Schwarz algorithm~\eqref{eq:OSLaplace} converges for all $ \widetilde{\omega}\in [\widetilde{\omega}_1,   \widetilde{\omega}_2]$ and the convergence factor~\eqref{eq:rhoehatRapprox} satisfies
\[\rho (\widetilde{\omega}, \sigma_1, \sigma_2)<1.\]
\end{theorem}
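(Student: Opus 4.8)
The plan is to prove that the squared convergence factor in \eqref{eq:rhoehatRapprox} stays strictly below $1$ for every $\widetilde{\omega}$ in the range. Write $N$ for the product of the two numerators,
\[
N := \big[(\sigma_1-\sqrt{\nu_2}\,\widetilde{\omega})^2+\nu_2\widetilde{\omega}^2\big]\,\big[(\sigma_2-\sqrt{\nu_1}\,\widetilde{\omega})^2+\nu_1\widetilde{\omega}^2\big],
\]
and $D$ for the product of the two denominators,
\[
D := \big[(\sigma_1+\sqrt{\nu_1}\,\widetilde{\omega})^2+\nu_1\widetilde{\omega}^2\big]\,\big[(\sigma_2+\sqrt{\nu_2}\,\widetilde{\omega})^2+\nu_2\widetilde{\omega}^2\big].
\]
Because $\widetilde{\omega}\ge\widetilde{\omega}_1>0$ and $\nu_1,\nu_2>0$, both denominators are strictly positive, so the assertion $\rho(\widetilde{\omega},\sigma_1,\sigma_2)<1$ is equivalent to $D-N>0$, and this is what I would establish.

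The computational core is to expand $D-N$. Writing $(\sigma\mp c)^2+c^2=\sigma^2\mp2c\sigma+2c^2$ and multiplying out, the monomials $\sigma_1^2\sigma_2^2$, $\sigma_1\sigma_2$, and the term containing neither $\sigma_1$ nor $\sigma_2$ cancel between $D$ and $N$; collecting what remains and pulling out the common factor, I expect to reach
\[
D-N = 2(\sqrt{\nu_1}+\sqrt{\nu_2})\,\widetilde{\omega}\,(\sigma_1+\sigma_2)\Big[\sigma_1\sigma_2 + (\sqrt{\nu_2}-\sqrt{\nu_1})\,\widetilde{\omega}\,(\sigma_1-\sigma_2) + 2\sqrt{\nu_1\nu_2}\,\widetilde{\omega}^2\Big].
\]
The prefactor $2(\sqrt{\nu_1}+\sqrt{\nu_2})\widetilde{\omega}(\sigma_1+\sigma_2)$ is strictly positive, since $\widetilde{\omega}>0$ and $\sigma_1+\sigma_2\ge\sigma_2>0$ under either hypothesis, so everything reduces to showing the bracket is positive.

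This is exactly where the ordering hypotheses are used. If $\nu_1<\nu_2$, then $\sqrt{\nu_2}-\sqrt{\nu_1}>0$ and the assumption $0<\sigma_2\le\sigma_1$ gives $\sigma_1-\sigma_2\ge0$, so the middle term of the bracket is nonnegative; together with $\sigma_1\sigma_2>0$ and $2\sqrt{\nu_1\nu_2}\widetilde{\omega}^2>0$ this makes the bracket strictly positive. If $\nu_2<\nu_1$, then $\sqrt{\nu_2}-\sqrt{\nu_1}<0$ while $0<\sigma_1\le\sigma_2$ gives $\sigma_1-\sigma_2\le0$, so the product $(\sqrt{\nu_2}-\sqrt{\nu_1})(\sigma_1-\sigma_2)$ is again nonnegative and the bracket is again strictly positive. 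In both cases $D-N>0$, hence $\rho(\widetilde{\omega},\sigma_1,\sigma_2)<1$ for all $\widetilde{\omega}\in[\widetilde{\omega}_1,\widetilde{\omega}_2]$, and the error $\hat{e}_j^k$ of the Laplace-transformed iteration \eqref{eq:OSLaplace} contracts at every such frequency. The only real obstacle is the bookkeeping in expanding $D-N$; once the displayed factorization is in hand the conclusion is immediate, and it is precisely the compatibility between the order of $\sigma_1,\sigma_2$ and the order of $\nu_1,\nu_2$ that pins down the sign of the bracket's middle term.
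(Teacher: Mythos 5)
Your proposal is correct and follows essentially the same route as the paper: the paper also reduces $\rho<1$ to the inequality $\widetilde{\omega}(\sqrt{\nu_1}-\sqrt{\nu_2})(\sigma_1-\sigma_2)-\sigma_1\sigma_2-2\sqrt{\nu_1\nu_2}\,\widetilde{\omega}^2<0$ (your bracket with the sign flipped) and concludes from $(\sqrt{\nu_1}-\sqrt{\nu_2})(\sigma_1-\sigma_2)\le 0$. The only difference is that you carry out the expansion of $D-N$ and its factorization explicitly, which the paper leaves implicit; your factorization checks out.
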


\begin{proof}
To guarantee convergence of the optimized Schwarz algorithm~\eqref{eq:OSLaplace}, we want from~\eqref{eq:rhoehatRapprox} that
\[\rho(\widetilde{\omega}, \sigma_1, \sigma_2) = 
\sqrt{ 
\frac{(\sigma_1-\sqrt{\nu_2} \widetilde{\omega})^2+\nu_2 \widetilde{\omega}^2 } 
{(\sigma_1+\sqrt{\nu_1} \widetilde{\omega})^2+\nu_1 \widetilde{\omega}^2} 
\cdot 
\frac{(\sigma_2-\sqrt{\nu_1} \widetilde{\omega})^2+\nu_1 \widetilde{\omega}^2} 
{(\sigma_2+\sqrt{\nu_2} \widetilde{\omega})^2+\nu_2 \widetilde{\omega}^2} 
} <1,\]
which can be simplified to
\[\widetilde{\omega}(\sqrt{\nu_1} - \sqrt{\nu_2})(\sigma_1 - \sigma_2)
-\sigma_1 \sigma_2
-2 \sqrt{\nu_1}\sqrt{\nu_2}\widetilde{\omega}^2 < 0. \]
A simple sufficient condition for this inequality to hold is $(\sqrt{\nu_1}-\sqrt{\nu_2})(\sigma_1-\sigma_2) \leq 0$, which is clearly not a necessary condition. This concludes the proof.
\end{proof}

In the following subsections, we consider three choices for the transmission parameters $\sigma_j$ and their related min-max problems~\eqref{eq:P}. In all cases, Theorem~\ref{thm:SuffCond} will be satisfied to guarantee convergence of optimized Schwarz algorithm~\eqref{eq:OSLaplace} when using these local transmission conditions. To treat the min-max problems~\eqref{eq:P} and find the best transmission parameters $\sigma_j$, we follow three steps similar as used in~\cite{Dubois2015}:
\begin{enumerate}
\item restrict the range of the transmission parameter $\sigma_j$ with respect to the frequencies $\widetilde{\omega}_1$ and $\widetilde{\omega}_2$;
\item identify possible local maximum points $\widetilde{\omega}$ for the min-max problem~\eqref{eq:P};
\item analyze how these local maxima behave when the transmission parameters $\sigma_j$ vary to find the minimizers.
\end{enumerate}

\subsection{Local transmission parameter: Version I}\label{sec:v1}
We first consider the transmission parameters $\sigma_j$ with one free variable $p$,
\begin{equation}\label{eq:sgmv1}
\sigma_1 =\sigma_2=\sqrt{\nu_2} p,  \quad p > 0,
\end{equation}
where we scale both parameters with only one diffusion coefficient $\nu_2$. Note that one could also scale with respect to $\nu_1$ instead. Here, the parameter $p$ is chosen to be positive such that the hypothesis in Theorem~\ref{thm:SuffCond} is satisfied, and thus the convergence of~\eqref{eq:OSLaplace} is guaranteed. Although this choice may not be the best one, as the optimal transmission operators~\eqref{eq:sigopt} are scaled with respect to both diffusion coefficients $\nu_1$ and $\nu_2$, we still analyze this very simple choice both for completeness and comparison purposes. The convergence factor~\eqref{eq:rhoehatRapprox} for this choice is given by
\begin{equation}\label{eq:rhov1}
\rho(\widetilde{\omega}, \, p) = 
\sqrt{ 
\frac{(p - \widetilde{\omega})^2 + \widetilde{\omega}^2 } 
{(p + \mu \widetilde{\omega})^2 + \mu^2 \widetilde{\omega}^2}  
\cdot 
\frac{(p - \mu \widetilde{\omega})^2 + \mu^2 \widetilde{\omega}^2}
{(p + \widetilde{\omega})^2 + \widetilde{\omega}^2} 
}, 
\end{equation}
where $\mu := \sqrt{\frac{\nu_1}{\nu_2}}$ such that $\mu^2$ is the ratio of the two diffusion coefficients. In the following, we only consider the case when $\mu>1$, since the case when $\mu<1$ can be converted to the case $\mu>1$ by interchanging $\nu_1$ and $\nu_2$. We now want to find the best value of the transmission parameter $p$ such that the convergence factor~\eqref{eq:rhov1} can be minimized uniformly over the range of frequencies $[\widetilde{\omega}_1, \widetilde{\omega}_2]$. In this way, the min-max problem~\eqref{eq:P} becomes
\begin{equation}\label{eq:P1}
\min_{p > 0} \left(\max_{\widetilde{\omega}_1 \leq \widetilde{\omega} \leq \widetilde{\omega}_2} \rho (\widetilde{\omega}, p) \right).  
\tag{P1}
\end{equation}
We first show how to restrict the range for the transmission parameter $p$.

\begin{lemma}[Restrict parameter $p$]\label{lem:restrictpv1}
The min-max problem~\eqref{eq:P1} is equivalent to the problem where we minimize the convergence factor when the transmission parameter $p$ is in the interval 
\[p\in\left\{
\begin{aligned}
&\big[\sqrt{2\mu }\widetilde{\omega}_1, \; \sqrt{2\mu }\widetilde{\omega}_2\big],  
&& \text{if} \ \mu \leq 2+\sqrt{3}, \\
&\Big[\widetilde{\omega}_1\sqrt{(\mu-1)^2-\delta}, \; \widetilde{\omega}_2\sqrt{(\mu-1)^2+\delta}\Big],  
&& \text{if} \ \mu > 2+\sqrt{3}, 
\end{aligned}
\right.\]
with $\delta =\sqrt{(\mu^2-4\mu+1)(\mu^2+1)}$.
\end{lemma}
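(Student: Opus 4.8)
The plan is to exploit a scaling invariance. Since the quadratics in~\eqref{eq:rhov1} are all homogeneous of degree two in the pair $(p,\widetilde{\omega})$, the convergence factor depends on $p$ and $\widetilde{\omega}$ only through the quotient $q:=p/\widetilde{\omega}$; writing $\rho(\widetilde{\omega},p)=\sqrt{G(q)}$ with
\[G(q):=\frac{\big((q-1)^2+1\big)\big((q-\mu)^2+\mu^2\big)}{\big((q+1)^2+1\big)\big((q+\mu)^2+\mu^2\big)},\]
one checks directly (each of the two factors is $<1$ for $q>0$, which is also Theorem~\ref{thm:SuffCond} with $\sigma_1=\sigma_2$) that $0<G(q)<1$ for all $q>0$, while $G(0)=G(+\infty)=1$. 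Hence $G$ attains its minimum on $(0,\infty)$, and the task reduces to locating the critical points of $G$ and deducing its global shape.

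For the critical points I would use the identity $\dfrac{q-a}{(q-a)^2+a^2}-\dfrac{q+a}{(q+a)^2+a^2}=\dfrac{2a(q^2-2a^2)}{q^4+4a^4}$ to rewrite $G'(q)=0$ (via $(\log G)'$) as $\dfrac{q^2-2}{q^4+4}+\dfrac{\mu(q^2-2\mu^2)}{q^4+4\mu^4}=0$. Clearing denominators and setting $x:=q^2$ turns this into the cubic
\[x^3-2(\mu^2-\mu+1)\,x^2+4\mu(\mu^2-\mu+1)\,x-8\mu^3=0,\]
which has $x=2\mu$ as a root; dividing it out leaves $x^2-2(\mu-1)^2x+4\mu^2=0$ with discriminant $4(\mu^2-4\mu+1)(\mu^2+1)$. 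Thus, when $\mu\le 2+\sqrt{3}$ the only positive critical point of $G$ is $q^\star=\sqrt{2\mu}$, whereas when $\mu> 2+\sqrt{3}$ there are three, namely $q^\star=\sqrt{2\mu}$ and $q_\pm=\sqrt{(\mu-1)^2\pm\delta}$ with $\delta$ as in the statement; the product of the two extra roots is $q_-^2q_+^2=4\mu^2=(q^\star)^2\cdot(q^\star)^2$ and $q_\pm\neq q^\star$, which gives the ordering $q_-<q^\star<q_+$. Combining this with $G(0)=G(+\infty)=1$, $0<G<1$ on $(0,\infty)$, and strict monotonicity of $G$ between consecutive critical points forces the shape of $G$: for $\mu\le 2+\sqrt{3}$, $G$ is strictly decreasing on $(0,q^\star]$ and strictly increasing on $[q^\star,\infty)$; for $\mu>2+\sqrt{3}$, $G$ is strictly decreasing on $(0,q_-]$, has a local maximum at $q^\star$, and is strictly increasing on $[q_+,\infty)$, so $q_\pm$ are local minima.

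The restriction then follows by a monotone replacement argument. Write $q_a\le q_b$ for the leftmost and rightmost critical points ($q_a=q_b=\sqrt{2\mu}$ if $\mu\le 2+\sqrt{3}$, and $q_a=\sqrt{(\mu-1)^2-\delta}$, $q_b=\sqrt{(\mu-1)^2+\delta}$ otherwise). If $p<\widetilde{\omega}_1 q_a$, then $p/\widetilde{\omega}\le p/\widetilde{\omega}_1<q_a$ for every $\widetilde{\omega}\in[\widetilde{\omega}_1,\widetilde{\omega}_2]$, so all these quotients lie in the initial branch where $G$ is strictly decreasing; replacing $p$ by $p':=\widetilde{\omega}_1 q_a$ keeps $p'/\widetilde{\omega}\le q_a$ while strictly increasing every quotient, hence strictly decreases $\rho(\widetilde{\omega},p)$ at each $\widetilde{\omega}$ and therefore strictly decreases $\max_{\widetilde{\omega}}\rho(\widetilde{\omega},p)$. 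Symmetrically, if $p>\widetilde{\omega}_2 q_b$ then all quotients satisfy $p/\widetilde{\omega}\ge p/\widetilde{\omega}_2>q_b$ and lie in the final branch where $G$ is strictly increasing, so replacing $p$ by $\widetilde{\omega}_2 q_b$ again strictly decreases the maximum. Consequently no $p$ outside $[\widetilde{\omega}_1 q_a,\widetilde{\omega}_2 q_b]$ can minimize~\eqref{eq:P1}, and $\min_{p>0}\max_{\widetilde{\omega}}\rho=\min_{p\in[\widetilde{\omega}_1 q_a,\widetilde{\omega}_2 q_b]}\max_{\widetilde{\omega}}\rho$, which is precisely the claimed interval; the threshold case $\mu=2+\sqrt{3}$, where $\delta=0$ and $(\mu-1)^2=2\mu$, is consistent with both formulas.

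The main obstacle I expect lies in the second paragraph: massaging the critical-point equation into a cubic that visibly factors, and, more delicately, pinning down the \emph{global} shape of $G$ in the three-critical-point regime $\mu>2+\sqrt{3}$ — i.e. proving $q_\pm$ are minima and $q^\star$ a maximum, not merely arguing locally. Once the monotone branches of $G$ are identified, the restriction step is routine bookkeeping, but one must keep track of the asymmetry: the lower bound pairs with $\widetilde{\omega}_1$ and the upper bound with $\widetilde{\omega}_2$, precisely because $p/\widetilde{\omega}$ is largest at $\widetilde{\omega}=\widetilde{\omega}_1$ and smallest at $\widetilde{\omega}=\widetilde{\omega}_2$.
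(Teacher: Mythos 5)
Your proof is correct and follows essentially the same route as the paper: the sign of $\partial\rho/\partial p$ (equivalently, of $G'$ after your homogeneity reduction $q=p/\widetilde{\omega}$) is governed by the same factored polynomial $(x-2\mu)\big(x^2-2(\mu-1)^2x+4\mu^2\big)$ in $x=q^2$, the case split on the discriminant $4(\mu^2-4\mu+1)(\mu^2+1)$ is identical, and the concluding push-$p$-inward monotone replacement is exactly the paper's argument. The reduction to the single variable $q$ and the explicit classification of the critical points of $G$ are a clean repackaging (and slightly more rigorous about global monotone branches) rather than a different method.
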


\begin{proof}
We first take the partial derivative of the convergence factor~\eqref{eq:rhov1} with respect to the transmission parameter $p$,
\begin{equation}\label{eq:signDrhoDp}
\text{sign}\Big(\frac{\partial{\rho}}{\partial{p}}\Big) = 
\text{sign}\Big(
(p^2 - 2\mu \widetilde{\omega}^2)
\big(p^4 - 2 p^2(\mu - 1)^2 \widetilde{\omega}^2 + 4\mu^2 \widetilde{\omega}^4\big)
\Big).
\end{equation}
The discriminant of the second polynomial $p^4- 2p^2(\mu - 1)^2 \widetilde{\omega}^2 + 4\mu^2 \widetilde{\omega}^4$ is
\begin{equation}\label{eq:Delta}
\Delta=4\widetilde{\omega}^4 (\mu^2-4\mu+1)(\mu^2+1)  .
\end{equation}
According to the value of the discriminant~\eqref{eq:Delta}, we divide the analysis into two cases.

\textbf{Case 1 $\Delta \leq 0$:} In this case, we find from~\eqref{eq:Delta} that $\mu \leq 2+\sqrt{3}$, and the polynomial $p^4- 2p^2(\mu - 1)^2 \widetilde{\omega}^2 + 4\mu^2 \widetilde{\omega}^4$ is always nonnegative. Thus, we have
\[\text{sign}\Big(\frac{\partial{\rho}}{\partial{p}}\Big)= 
\text{sign}\big(p^2-2\mu \omega^2\big)
=\left\{\begin{aligned}
&\text{positive}, && \text{if}  \ p>\sqrt{2\mu }\widetilde{\omega}, \\
&\text{negative}, && \text{if} \ p<\sqrt{2\mu} \widetilde{\omega}.
\end{aligned}
\right.\]
We observe that increasing $p$ will make the convergence factor~\eqref{eq:rhov1} decrease when $p<\sqrt{2\mu} \widetilde{\omega}_1$,  and decreasing $p$ will make the convergence factor~\eqref{eq:rhov1} decrease when $p>\sqrt{2\mu} \widetilde{\omega}_2$.  Therefore,  $p$ should be in the range of $[\sqrt{2\mu }\widetilde{\omega}_1, \sqrt{2\mu }\widetilde{\omega}_2]$ to minimize the convergence factor $\rho$.

\textbf{Case 2 $\Delta >0$:} In this case, we find from~\eqref{eq:Delta} that $\mu > 2+\sqrt{3}$. From~\eqref{eq:signDrhoDp}, we then find 
\[\text{sign}\Big(\frac{\partial{\rho}}{\partial{p}}\Big)= 
\left\{\begin{aligned}
&\text{negative}, && \text{if}  \ 0 < p^2 < \widetilde{\omega}^2\big((\mu-1)^2- \delta\big), \\
&\text{positive}, && \text{if}  \ \widetilde{\omega}^2\big((\mu-1)^2- \delta\big) < p^2 < 2\mu \widetilde{\omega}^2, \\
&\text{negative}, && \text{if} \ 2\mu \widetilde{\omega}^2 < p^2 < \widetilde{\omega}^2\big((\mu-1)^2+\delta\big),\\
&\text{positive}, && \text{if} \ p^2 > \widetilde{\omega}^2\big((\mu-1)^2+\delta\big).
\end{aligned}
\right.\]
Similar to \textbf{Case 1},  $p^2$ should be in the range of $[\widetilde{\omega}_1^2((\mu-1)^2-\delta), \, \widetilde{\omega}_2^2((\mu-1)^2+\delta)]$ to minimize the convergence factor $\rho$. This completes the proof. 
\end{proof}

We now study the behavior of the convergence factor~\eqref{eq:rhov1} as a function of $\widetilde{\omega}$.

\begin{lemma}[Local maxima of $\widetilde{\omega}$]\label{lem:maxomegav1}
Denoting by $\widetilde{\omega}_c := \frac{p}{\sqrt{2 \mu}}$, we can write the maximum of the convergence factor~\eqref{eq:rhov1} as
\[\begin{aligned}
&\text{if} \ 
\mu \leq 2+\sqrt{3},  
&&\max_{\widetilde{\omega}_1 \leq \widetilde{\omega} \leq \widetilde{\omega}_2} \rho(\widetilde{\omega}, p) 
=\max\left\{
\rho (\widetilde{\omega}_1, p), \,
\rho (\widetilde{\omega}_2, p)
\right\},  
\\
&\text{if} \ 
\mu > 2+\sqrt{3},  
&&\max_{\widetilde{\omega}_1 \leq \widetilde{\omega} \leq \widetilde{\omega}_2} \rho(\widetilde{\omega}, p) 
= \left\{\begin{aligned}
&\max\left\{\rho (\widetilde{\omega}_1, p), \,
\rho (\widetilde{\omega}_2, p)\right\},  
\widetilde{\omega}_c \notin[\widetilde{\omega}_1, \widetilde{\omega}_2],\\
&\max\left\{\rho (\widetilde{\omega}_1, p), \,
\rho (\widetilde{\omega}_c, p), \,
\rho (\widetilde{\omega}_2, p)\right\},   
\widetilde{\omega}_c \in [\widetilde{\omega}_1, \widetilde{\omega}_2].	 
\end{aligned}\right.    		
\end{aligned}\]
\end{lemma}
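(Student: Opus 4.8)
The plan is to fix $p>0$ and study $\rho(\cdot,p)$ from~\eqref{eq:rhov1} as a function of the single variable $\widetilde{\omega}$ on $(0,\infty)$, in the same spirit as the sign analysis of $\partial\rho/\partial p$ in the proof of Lemma~\ref{lem:restrictpv1}. Since $\rho\ge 0$, the sign of $\partial\rho/\partial\widetilde{\omega}$ coincides with that of $\partial(\rho^2)/\partial\widetilde{\omega}$, and $\rho^2$ in~\eqref{eq:rhov1} is a product of two rational functions of $\widetilde{\omega}$ whose denominators, after multiplying out, become the positive quantities $4\widetilde{\omega}^4+p^4$ and $4\mu^4\widetilde{\omega}^4+p^4$. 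Computing the logarithmic derivative $\partial(\log\rho^2)/\partial\widetilde{\omega}$, clearing these positive denominators and pulling out the positive factor $4p(\mu+1)$, I expect to arrive, with $z:=\widetilde{\omega}^2$, at a relation of the form
\[\text{sign}\Big(\frac{\partial\rho}{\partial\widetilde{\omega}}\Big)=\text{sign}\Big((2\mu z-p^2)\,\big(4\mu^2 z^2-2(\mu-1)^2p^2 z+p^4\big)\Big),\]
which is the analogue of~\eqref{eq:signDrhoDp}. The first factor vanishes exactly at $z=p^2/(2\mu)$, i.e.\ at $\widetilde{\omega}=\widetilde{\omega}_c$, which explains why $\widetilde{\omega}_c$ is the point singled out in the statement.

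Next I would analyze the quadratic factor $q(z):=4\mu^2 z^2-2(\mu-1)^2p^2 z+p^4$. Its discriminant is $4p^4\big((\mu-1)^4-4\mu^2\big)=4p^4(\mu^2-4\mu+1)(\mu^2+1)$, which is $\le 0$ precisely when $\mu\le 2+\sqrt{3}$ (using $\mu>1$); this is exactly the dichotomy appearing in the statement, and the proof splits accordingly. If $\mu\le 2+\sqrt{3}$, then $q(z)\ge 0$ for all $z$, hence $\text{sign}(\partial\rho/\partial\widetilde{\omega})=\text{sign}(2\mu\widetilde{\omega}^2-p^2)$, so $\rho(\cdot,p)$ is strictly decreasing on $(0,\widetilde{\omega}_c)$ and strictly increasing on $(\widetilde{\omega}_c,\infty)$. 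Consequently its maximum over any $[\widetilde{\omega}_1,\widetilde{\omega}_2]$ is attained at one of the endpoints, which is the first formula.

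If $\mu> 2+\sqrt{3}$, the upward parabola $q$ has two distinct real roots $z_\pm$; from $z_-z_+=p^4/(4\mu^2)>0$ and $z_-+z_+=(\mu-1)^2p^2/(2\mu^2)>0$ both roots are positive, and a short computation gives $q\big(p^2/(2\mu)\big)=-p^4(\mu^2-4\mu+1)/\mu<0$, so $q$ is negative at $p^2/(2\mu)$ and therefore $z_-<p^2/(2\mu)<z_+$. Tabulating the sign of $(2\mu z-p^2)q(z)$ on the four subintervals of $(0,\infty)$ cut out by $z_-<p^2/(2\mu)<z_+$ (negative, positive, negative, positive) shows that $\rho(\cdot,p)$ is successively decreasing, increasing, decreasing, increasing, with local minima at $\widetilde{\omega}=\sqrt{z_\pm}$ and a single interior local maximum at $\widetilde{\omega}_c$. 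Hence on $[\widetilde{\omega}_1,\widetilde{\omega}_2]$ the maximum is attained at an endpoint, and additionally possibly at $\widetilde{\omega}_c$ when $\widetilde{\omega}_c\in[\widetilde{\omega}_1,\widetilde{\omega}_2]$, which gives the second formula.

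The only genuine difficulty is the algebraic step producing the displayed sign relation: one has to expand $\partial(\log\rho^2)/\partial\widetilde{\omega}$, recognize the two ``sum of two squares'' denominators, and verify that the resulting cubic in $z$ admits $z=p^2/(2\mu)$ as a root so that it factors cleanly as $(2\mu z-p^2)q(z)$. Everything afterwards is elementary sign bookkeeping, entirely parallel to the case distinction already carried out in the proof of Lemma~\ref{lem:restrictpv1}.
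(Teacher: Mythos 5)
Your proposal is correct and follows essentially the same route as the paper: both reduce the problem to the sign of $\partial\rho/\partial\widetilde{\omega}$, factor it as $(2\mu\widetilde{\omega}^2-p^2)$ times the quartic $p^4-2p^2(\mu-1)^2\widetilde{\omega}^2+4\mu^2\widetilde{\omega}^4$ (your $q(z)$ with $z=\widetilde{\omega}^2$), split on the sign of the discriminant at $\mu=2+\sqrt{3}$, and conclude by the same monotonicity bookkeeping. Your explicit check that $q\bigl(p^2/(2\mu)\bigr)<0$ for $\mu>2+\sqrt{3}$, which places $\widetilde{\omega}_c$ strictly between the two roots and confirms it is the unique interior local maximum, is a detail the paper leaves implicit.
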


\begin{proof}
Taking the partial derivative of the convergence factor~\eqref{eq:rhov1} with respect to the frequency $\widetilde{\omega}$, we find
\[\text{sign} \Big(\frac{\partial{\rho}}{\partial{\widetilde{\omega}}}\Big) = 
\text{sign} \Big(-(p^2 - 2\mu \widetilde{\omega}^2)
(p^4 - 2p^2(\mu - 1)^2 \widetilde{\omega}^2 + 4\mu^2 \widetilde{\omega}^4)\Big),\]
which has the opposite sign of~\eqref{eq:signDrhoDp}. Given this similarity between the two partial derivatives, we also consider two cases.

\textbf{Case 1 $\mu \leq 2+\sqrt{3}$:} In this case, the discriminant~\eqref{eq:Delta} is non-positive, and the polynomial $p^4- 2p^2(\mu - 1)^2 \widetilde{\omega}^2 + 4\mu^2 \widetilde{\omega}^4$ is always nonnegative. Then, we have
\[\text{sign}\Big(\frac{\partial{\rho}}{\partial{\widetilde{\omega}}}\Big)= 
\text{sign}\big(2\mu \omega^2 - p^2\big)
=\left\{\begin{aligned}
&\text{negative}, && \text{if}  \ \widetilde{\omega}_1 < \widetilde{\omega} < \widetilde{\omega}_c, \\
&\text{positive}, && \text{if} \ \widetilde{\omega}_c < \widetilde{\omega} < \widetilde{\omega}_2,
\end{aligned}
\right.\]
meaning that the maximum of the convergence factor $\rho(\widetilde{\omega}, p)$ in the range $[\widetilde{\omega}_1, \widetilde{\omega}_2]$ is $\max\{\rho(\widetilde{\omega}_1, p), \,\rho(\widetilde{\omega}_2, p)\}$.

\textbf{Case 2 $\mu > 2+\sqrt{3}$:} In this case, we observe that,  
\[\text{sign}\Big(\frac{\partial{\rho}}{\partial{\widetilde{\omega}}}\Big)= 
\left\{\begin{aligned}
&\text{negative}, && \text{if}  \ 0 < \widetilde{\omega}^2 < \frac{\widetilde{\omega}_c^2}{2\mu}\big((\mu-1)^2- \delta\big), \\
&\text{positive}, && \text{if}  \ \frac{\widetilde{\omega}_c^2}{2\mu}\big((\mu-1)^2- \delta\big) < \widetilde{\omega}^2 < \widetilde{\omega}_c^2, \\
&\text{negative}, && \text{if} \ \widetilde{\omega}_c^2 < \widetilde{\omega}^2 < \frac{\widetilde{\omega}_c^2}{2\mu}\big((\mu-1)^2+\delta\big),\\
&\text{positive}, && \text{if} \ \widetilde{\omega}^2 > \frac{\widetilde{\omega}_c^2}{2\mu}\big((\mu-1)^2+\delta\big).
\end{aligned}
\right.\]
As the value of $\widetilde{\omega}_c=\frac{p}{\sqrt{2 \mu}}$ might fall outside the interval $[\widetilde{\omega}_1, \, \widetilde{\omega}_2]$, the maximum of the convergence factor $\rho(\widetilde{\omega}, p)$ will then be taken according to the value of $\widetilde{\omega}_c$. This concludes the proof.
\end{proof}

With the help of Lemma~\ref{lem:restrictpv1} and Lemma~\ref{lem:maxomegav1}, we can now identify the possible choices of the optimized parameter $p$ according to the ratio $\mu$.

\begin{theorem}[Optimized transmission parameter: $\mu \leq 2+\sqrt{3}$]\label{thm:optparav1}
The value $p$ minimizing the convergence factor~\eqref{eq:rhov1} is $p^* = \sqrt{2 \mu \widetilde{\omega}_1 \widetilde{\omega}_2}$.
\end{theorem}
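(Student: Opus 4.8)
The plan is to work in the regime $\mu \le 2+\sqrt{3}$, where by Lemma~\ref{lem:restrictpv1} the parameter can be restricted to $p \in [\sqrt{2\mu}\,\widetilde{\omega}_1, \sqrt{2\mu}\,\widetilde{\omega}_2]$, and by Lemma~\ref{lem:maxomegav1} the inner maximum over $\widetilde{\omega}$ is attained at one of the two endpoints, so that
\[
\max_{\widetilde{\omega}_1 \le \widetilde{\omega} \le \widetilde{\omega}_2} \rho(\widetilde{\omega}, p) = \max\{\rho(\widetilde{\omega}_1, p),\, \rho(\widetilde{\omega}_2, p)\}.
\]
The min-max problem~\eqref{eq:P1} thus reduces to minimizing over $p$ the maximum of just these two functions of $p$. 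First I would examine the monotonicity of $p \mapsto \rho(\widetilde{\omega}_1, p)$ and $p \mapsto \rho(\widetilde{\omega}_2, p)$ on the restricted interval, using the sign formula~\eqref{eq:signDrhoDp}: since $\mu \le 2+\sqrt{3}$ the quartic factor is nonnegative, so the sign of $\partial\rho/\partial p$ at a fixed $\widetilde{\omega}$ is the sign of $p^2 - 2\mu\widetilde{\omega}^2$. Hence $\rho(\widetilde{\omega}_1, p)$ is increasing in $p$ on the whole restricted interval (there $p \ge \sqrt{2\mu}\,\widetilde{\omega}_1$), while $\rho(\widetilde{\omega}_2, p)$ is decreasing (there $p \le \sqrt{2\mu}\,\widetilde{\omega}_2$). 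A decreasing function and an increasing function: their pointwise maximum is minimized exactly where they cross, i.e. at the equioscillation point where $\rho(\widetilde{\omega}_1, p) = \rho(\widetilde{\omega}_2, p)$.

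The core computation is then to solve $\rho(\widetilde{\omega}_1, p) = \rho(\widetilde{\omega}_2, p)$ for $p > 0$. Squaring~\eqref{eq:rhov1} and using the identity $(p-a\widetilde{\omega})^2 + a^2\widetilde{\omega}^2 = p^2 - 2a p\widetilde{\omega} + 2a^2\widetilde{\omega}^2$, the equation $\rho(\widetilde{\omega}_1,p)^2 = \rho(\widetilde{\omega}_2,p)^2$ becomes a rational identity in $p$, $\widetilde{\omega}_1$, $\widetilde{\omega}_2$, $\mu$. I expect the cross-multiplied polynomial to factor so that $p^2 = 2\mu\,\widetilde{\omega}_1\widetilde{\omega}_2$ drops out as the relevant root, giving $p^* = \sqrt{2\mu\,\widetilde{\omega}_1\widetilde{\omega}_2}$; one then checks this value lies in the restricted interval $[\sqrt{2\mu}\,\widetilde{\omega}_1, \sqrt{2\mu}\,\widetilde{\omega}_2]$, which is immediate since $\widetilde{\omega}_1 \le \sqrt{\widetilde{\omega}_1\widetilde{\omega}_2} \le \widetilde{\omega}_2$. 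A cleaner route that avoids a messy quartic is to exploit the scaling structure of $\rho$: at $\widetilde{\omega}_c = p/\sqrt{2\mu}$ one has $\rho(\widetilde{\omega}_c, p)$ equal to some $\mu$-dependent constant, and more usefully $\rho(\widetilde{\omega}, p)$ depends on $\widetilde{\omega}$ and $p$ only through the combination $\widetilde{\omega}/p$ together with an overall structure — so writing $r = \widetilde{\omega}/p$ one sees $\rho$ as a function $R(r)$ that decreases then increases with minimum... actually $\rho(\widetilde{\omega}_1,p)=\rho(\widetilde{\omega}_2,p)$ translates to $R(\widetilde{\omega}_1/p) = R(\widetilde{\omega}_2/p)$, and since $R$ has a single interior critical point (from Lemma~\ref{lem:maxomegav1}) at $r = 1/\sqrt{2\mu}$, symmetry of $R$ about that point in the variable $\log r$ — if it holds — forces $(\widetilde{\omega}_1/p)(\widetilde{\omega}_2/p) = 1/(2\mu)$, i.e. $p^2 = 2\mu\widetilde{\omega}_1\widetilde{\omega}_2$.

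The main obstacle I anticipate is verifying that the crossing-point equation has $p^* = \sqrt{2\mu\widetilde{\omega}_1\widetilde{\omega}_2}$ as its unique relevant solution and no spurious admissible root — i.e. confirming that after clearing denominators the polynomial genuinely factors with $p^2 - 2\mu\widetilde{\omega}_1\widetilde{\omega}_2$ as a factor, and that the other factor has no root in the restricted $p$-interval. The ``$\log r$-symmetry'' shortcut would make this transparent but needs to be checked directly from~\eqref{eq:rhov1}; if it fails, one falls back on the explicit algebra. Finally I would note that, having established monotonicity of the two endpoint curves in opposite directions, the minimizer of their max is necessarily the unique crossing point, so $p^*$ is indeed the global minimizer of~\eqref{eq:P1} in this case, completing the proof.
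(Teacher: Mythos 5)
Your proposal is correct and follows essentially the same route as the paper: restrict $p$ to $[\sqrt{2\mu}\,\widetilde{\omega}_1,\sqrt{2\mu}\,\widetilde{\omega}_2]$ via Lemma~\ref{lem:restrictpv1}, reduce the inner maximum to the two endpoints via Lemma~\ref{lem:maxomegav1}, observe from~\eqref{eq:signDrhoDp} that $\rho(\widetilde{\omega}_1,\cdot)$ increases and $\rho(\widetilde{\omega}_2,\cdot)$ decreases there, and conclude by equioscillation. As a side note, the ``$\log r$-symmetry'' you were unsure about does hold: writing $r=\widetilde{\omega}/p$ one checks directly that $\rho$ depends only on $r$ and satisfies $R(r)=R\bigl(1/(2\mu r)\bigr)$, which immediately yields $p^{*2}=2\mu\widetilde{\omega}_1\widetilde{\omega}_2$ without clearing denominators, and the opposite monotonicity on the restricted interval rules out any spurious root.
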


\begin{proof}
In this case, the maximum in the min-max problem~\eqref{eq:P1} is determined by Lemma~\ref{lem:maxomegav1} as $\max\left\{\rho(\widetilde{\omega}_1, p), \, \rho(\widetilde{\omega}_2, p)\right\}$, and we need to find its minimum with respect to $p$. According to~\eqref{eq:signDrhoDp}, it is easy to check that for the transmission parameter $p \in [\sqrt{2\mu }\widetilde{\omega}_1, \,\sqrt{2\mu }\widetilde{\omega}_2]$, the convergence factor $\rho (\widetilde{\omega}_1, p)$ is increasing with respect to $p$,  and  $\rho (\widetilde{\omega}_2, p)$ is decreasing with respect to $p$. Using then the equioscillation principle, the convergence factor can be minimized when its value at ${\omega}_1$ and ${\omega}_2$ are equal, i.e., $\rho(\widetilde{\omega}_1, p^*) = \rho(\widetilde{\omega}_2, p^*)$, which leads to the unique optimized parameter $p^*=\sqrt{2 \mu \widetilde{\omega}_1 \widetilde{\omega}_2}$.
\end{proof}

\begin{theorem}[Optimized transmission parameter: $\mu > 2+\sqrt{3}$]\label{thm:optparav2}
Let us denote by 
\[R_c := \rho(\widetilde{\omega}_c, p) = \rho(\frac{p}{\sqrt{2\mu}}, p) =  \sqrt{ \frac{(\sqrt{2 \mu}-1)^2+1}{(\sqrt{2}+\sqrt{\mu})^2+\mu} \frac{(\sqrt{2} -\sqrt{\mu})^2+\mu}{(\sqrt{2 \mu}+1)^2+1} }, 
\quad 
k_r := \frac{\widetilde{\omega}_2}{\widetilde{\omega}_1},\] 
and introduce two functions of $\mu$,
\[h_1(\mu) := \frac{\mu^2+1+\sqrt{(\mu^2-4 \mu+1)(\mu^2+4 \mu+1)}}{4\mu}, 
\quad 
h_2(\mu) := \frac{(\mu-1)^2 + \delta}{2\mu}.\]
Moreover, we divide the possible range of $p$ into three intervals,
\[\begin{aligned}
I_l &:=[\widetilde{\omega}_1\sqrt{(\mu-1)^2-\delta}, \sqrt{2\mu} \widetilde{\omega}_1], \quad 
I_c := [\sqrt{2\mu} \widetilde{\omega_1}, \sqrt{2\mu} \widetilde{\omega}_2], \\ 
I_r &:= [\sqrt{2\mu} \widetilde{\omega}_2, \widetilde{\omega}_2 \sqrt{(\mu-1)^2+\delta}].
\end{aligned}\]
According to the value of the ratio $k_r$, we have the following three cases:
\begin{enumerate}
\item[(i)] if $k_r > h_2(\mu)$, then one value of the parameter $p$ minimizing the convergence factor is $p^* =\sqrt{2 \mu \widetilde{\omega}_1 \widetilde{\omega}_2} \in I_c$. This optimized parameter $p^*$ is unique when $\rho(\widetilde{\omega}_1, p^*) \geq R_c$. Otherwise, the minimum of the convergence factor is also attained for any $p$ chosen in a closed interval around $p^*$; 

\item[(ii)] if $h_1(\mu) < k_r \leq h_2(\mu)$, the minimum of the convergence factor is attained for any $p$ chosen in a closed interval around $p^*$; 

\item[(iii)] if $k_r \leq h_1(\mu)$, then the minimum is attained with two distinct values $p_l$ and $p_r$, which can be obtained by solving $\rho(\widetilde{\omega}_1, p)=\rho(\widetilde{\omega}_2, p)$ in two intervals $I_l$ and $I_r$ respectively. Furthermore, these two distinct minimizers are the two positive roots of the fourth-order polynomial
\begin{equation}\label{eq:polyfour}
\frac{p^4}{2} + (\mu \widetilde{\omega}_2-\widetilde{\omega}_1)(\widetilde{\omega}_2-\mu \widetilde{\omega}_1)p^2 + 2 \mu^2 \widetilde{\omega}_1^2 \widetilde{\omega}_2^2 = 0.
\end{equation}
\end{enumerate}
\end{theorem}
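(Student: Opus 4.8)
The plan is to reduce~\eqref{eq:P1} to a one-dimensional problem over $p$ in the admissible interval $I_l\cup I_c\cup I_r$ given by Lemma~\ref{lem:restrictpv1}, where by Lemma~\ref{lem:maxomegav1} the quantity to be minimized is
\[
F(p)=\begin{cases}\max\{\rho(\widetilde{\omega}_1,p),\,\rho(\widetilde{\omega}_2,p)\}, & p\in I_l\cup I_r,\\ \max\{\rho(\widetilde{\omega}_1,p),\,R_c,\,\rho(\widetilde{\omega}_2,p)\}, & p\in I_c,\end{cases}
\]
since on $I_c$ one has $\widetilde{\omega}_c=p/\sqrt{2\mu}\in[\widetilde{\omega}_1,\widetilde{\omega}_2]$, and $R_c=\rho(\widetilde{\omega}_c,p)$ is independent of $p$ because $\widetilde{\omega}_c$ is proportional to $p$ and all powers of $p$ cancel in~\eqref{eq:rhov1}. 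Two structural facts organize the analysis. First, $\rho(\widetilde{\omega}_j,\sqrt{2\mu}\,\widetilde{\omega}_j)=R_c$, so $p\mapsto\rho(\widetilde{\omega}_1,p)$ equals $R_c$ at the left end of $I_c$ and $p\mapsto\rho(\widetilde{\omega}_2,p)$ equals $R_c$ at the right end of $I_c$. Second, substituting $p\mapsto 2\mu\widetilde{\omega}_1\widetilde{\omega}_2/p$ in~\eqref{eq:rhov1} yields the reflection identity $\rho(\widetilde{\omega}_1,p)=\rho(\widetilde{\omega}_2,2\mu\widetilde{\omega}_1\widetilde{\omega}_2/p)$; this map fixes $p^\ast:=\sqrt{2\mu\widetilde{\omega}_1\widetilde{\omega}_2}$, carries $I_l$ onto $I_r$ (here the identity $((\mu-1)^2-\delta)((\mu-1)^2+\delta)=4\mu^2$ matches endpoints), preserves $I_c$, and exchanges the two factors of $\rho$, so $F$ is invariant under it, and it suffices to study $F$ for $p\le p^\ast$.

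Next I would determine the monotonicity of $p\mapsto\rho(\widetilde{\omega}_1,p)$ and $p\mapsto\rho(\widetilde{\omega}_2,p)$ on each of $I_l$, $I_c$, $I_r$ by reading off the sign of~\eqref{eq:signDrhoDp} at $\widetilde{\omega}=\widetilde{\omega}_j$; its thresholds in $p^2$ are $\widetilde{\omega}_j^2((\mu-1)^2-\delta)$, $2\mu\widetilde{\omega}_j^2$ and $\widetilde{\omega}_j^2((\mu-1)^2+\delta)$, and the identity $2\mu/((\mu-1)^2-\delta)=((\mu-1)^2+\delta)/(2\mu)=h_2(\mu)$ records how the thresholds for $\widetilde{\omega}_1$ and for $\widetilde{\omega}_2$ interlace as $k_r$ varies. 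The outcome is: on $I_l$, $\rho(\widetilde{\omega}_1,\cdot)$ increases up to the value $R_c$, while $\rho(\widetilde{\omega}_2,\cdot)$ has at most one interior minimum; on $I_c$, $\rho(\widetilde{\omega}_1,\cdot)$ starts at $R_c$ and decreases (possibly turning up again near the right end of $I_c$), $\rho(\widetilde{\omega}_2,\cdot)$ mirrors this by the reflection, and $p^\ast$ is the unique point of $I_c$ where they coincide and is a local — hence global on $I_c$ — minimizer of their pointwise maximum. Thus on $I_c$ the two competitors are the equioscillation value $\rho(\widetilde{\omega}_1,p^\ast)$ and the constant $R_c$.

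I would then solve $\rho(\widetilde{\omega}_1,p)=\rho(\widetilde{\omega}_2,p)$. Writing $\rho(\widetilde{\omega}_j,p)^2=N_j(p)/D_j(p)$ with $D_j(p)=N_j(-p)$, the difference $N_1D_2-N_2D_1=N_1(p)N_2(-p)-N_2(p)N_1(-p)$ is odd in $p$, hence equals $p$ times a cubic in $p^2$; since $p^\ast$ is a root, this cubic factors, up to a positive constant, as $\big(p^2-2\mu\widetilde{\omega}_1\widetilde{\omega}_2\big)$ times $\tfrac{p^4}{2}+(\mu\widetilde{\omega}_2-\widetilde{\omega}_1)(\widetilde{\omega}_2-\mu\widetilde{\omega}_1)p^2+2\mu^2\widetilde{\omega}_1^2\widetilde{\omega}_2^2$, which is~\eqref{eq:polyfour}. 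Viewed as a quadratic in $p^2$,~\eqref{eq:polyfour} has discriminant $[(\mu\widetilde{\omega}_2-\widetilde{\omega}_1)(\widetilde{\omega}_2-\mu\widetilde{\omega}_1)]^2-4\mu^2\widetilde{\omega}_1^2\widetilde{\omega}_2^2$, and dividing by $\widetilde{\omega}_1^2\widetilde{\omega}_2^2$ and using $(\mu\widetilde{\omega}_2-\widetilde{\omega}_1)(\widetilde{\omega}_2-\mu\widetilde{\omega}_1)/(\widetilde{\omega}_1\widetilde{\omega}_2)=\mu(k_r+1/k_r)-(\mu^2+1)$ shows it is nonnegative, in the relevant branch ($k_r\ge 1$, $\mu>2+\sqrt{3}$), precisely when $k_r\le h_2(\mu)$; in that case the two roots $p_l<p^\ast<p_r$ satisfy $p_lp_r=(p^\ast)^2$, and the reflection symmetry together with the endpoint identity places $p_l\in I_l$ and $p_r\in I_r$.

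The final step is to compare the candidate minimal values, and this is where the bulk of the work, and the threshold $h_1(\mu)$, comes in. On $I_c$, $F\ge R_c$, with equality exactly on the closed, possibly degenerate set where $\rho(\widetilde{\omega}_1,p)\le R_c$ and $\rho(\widetilde{\omega}_2,p)\le R_c$, an interval around $p^\ast$ that is nondegenerate iff $\rho(\widetilde{\omega}_1,p^\ast)\le R_c$; comparing $(p^\ast)^2=2\mu\widetilde{\omega}_1\widetilde{\omega}_2$ with the threshold $\widetilde{\omega}_1^2((\mu-1)^2+\delta)$ shows this holds automatically for $k_r\le h_2(\mu)$ and must be checked by a direct computation when $k_r>h_2(\mu)$, which yields the dichotomy of case~(i). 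On $I_l$ (hence by reflection on $I_r$), $\rho(\widetilde{\omega}_1,\cdot)\le R_c$ and $F=\max\{\rho(\widetilde{\omega}_1,\cdot),\rho(\widetilde{\omega}_2,\cdot)\}$ is minimized at the crossing $p_l$ with common value $\rho(\widetilde{\omega}_1,p_l)$; one then shows this value is $<R_c$ iff $k_r\le h_1(\mu)$, where $h_1(\mu)$ is identified as the value of $k_r$ solving $\rho(\widetilde{\omega}_1,p_l)=R_c$. Collecting: for $k_r>h_2(\mu)$ the minimizer is $p^\ast$ (unique iff $\rho(\widetilde{\omega}_1,p^\ast)\ge R_c$); for $h_1(\mu)<k_r\le h_2(\mu)$ the minimum $R_c$ is attained on an interval around $p^\ast$; and for $k_r\le h_1(\mu)$ it is attained at the two distinct roots $p_l,p_r$ of~\eqref{eq:polyfour}. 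The principal obstacle is precisely this bookkeeping: keeping track of which of $\rho(\widetilde{\omega}_1,p)$, $\rho(\widetilde{\omega}_2,p)$, $R_c$ realizes the maximum on each of $I_l$, $I_c$, $I_r$, how this configuration reorganizes as $k_r$ crosses $h_1(\mu)$ and then $h_2(\mu)$, and the verification of the two threshold identities ($h_2(\mu)$ as the discriminant threshold of~\eqref{eq:polyfour}, and $h_1(\mu)$ from $\rho(\widetilde{\omega}_1,p_l)=R_c$).
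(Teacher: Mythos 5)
Your proposal is correct and follows essentially the same route as the paper's proof: restrict $p$ to $I_l\cup I_c\cup I_r$, read off the monotonicity of $\rho(\widetilde{\omega}_1,\cdot)$ and $\rho(\widetilde{\omega}_2,\cdot)$ from the sign of~\eqref{eq:signDrhoDp}, locate their crossings via the factorization~\eqref{eq:polysix} into $p^2-2\mu\widetilde{\omega}_1\widetilde{\omega}_2$ times~\eqref{eq:polyfour}, and compare the equioscillation values with the plateau $R_c$ through the thresholds $h_1(\mu)$ and $h_2(\mu)$. Your explicit reflection symmetry $p\mapsto 2\mu\widetilde{\omega}_1\widetilde{\omega}_2/p$ and the discriminant computation tying $k_r\leq h_2(\mu)$ to the existence of positive roots of~\eqref{eq:polyfour} make rigorous two points the paper only asserts (the ``similarity'' of $I_l$ and $I_r$, and the root count), which is a welcome sharpening rather than a different method.
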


\begin{proof} 
The main idea is to look at three intervals $I_l$,  $I_c$ and $I_r$ and find the best value of the transmission parameter $p$ in each interval separately. Let us start with the case when $p \in I_c$, where we have the interior local maximizer $\widetilde{\omega}_c = \frac{p}{\sqrt{2 \mu}}$ lying in the interval $[\widetilde{\omega}_1, \widetilde{\omega}_2]$, as shown in Figure~\ref{fig:rhov1} on the left. 
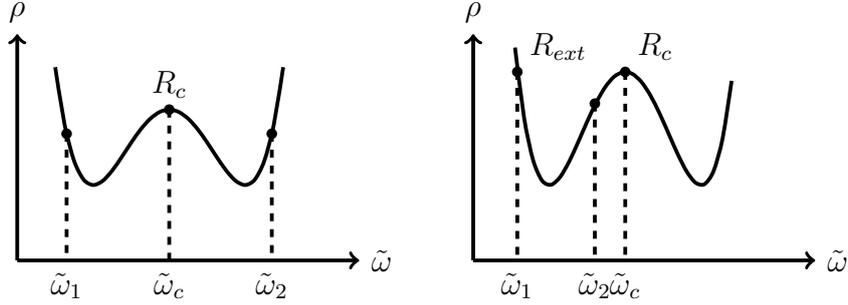
\begin{figure}
\centering
\begin{tikzpicture}
%case 1
\draw [line width=0.5mm, ->] (0, 0) -- (4.5, 0) node [anchor = west] {$\tilde \omega$};
\draw [line width=0.5mm, ->] (0, 0) -- (0, 3) node [anchor = south] {$\rho$};
%mu = 10
%\draw [domain = 0.5 : 4, thick, smooth, variable = \x] plot({\x}, {sqrt( ((10 - \x)^2 + (\x)^2) * ((10/100 - \x)^2 + (\x)^2) / ((10/100 + \x)^2 + (\x)^2) / ((10 + \x)^2 + (\x)^2) )});
%a two potential function 1+(x-1)^2(x-3)^2 
\draw [domain = 0.5:3.5, line width=0.5mm, smooth, variable = \x] plot({\x}, {(\x - 1)^2*(\x - 3)^2 + 1});
%omega_c
\draw [dashed, line width=0.5mm] (2, 2) node[anchor = south]{$R_c$} -- (2, 0) node[anchor = north] {$\tilde \omega_c$};
\fill (2, 2) circle (0.07);
%omega_1
\draw [dashed, line width=0.5mm] (0.65, 1.68) -- (0.65, 0) node[anchor = north] {$\tilde \omega_1$};
\fill (0.65, 1.68) circle (0.07);
%omega_2
\draw [dashed, line width=0.5mm] (3.35, 1.68) -- (3.35, 0) node[anchor = north] {$\tilde \omega_2$};
\fill (3.35, 1.68) circle (0.07);
%case 2
\draw [line width=0.5mm, ->] (6, 0) -- (10.5, 0) node [anchor = west] {$\tilde \omega$};
\draw [line width=0.5mm, ->] (6, 0) -- (6, 3) node [anchor = south] {$\rho$};
%two potential function
\draw [domain = 6.55:9.4, line width=0.5mm, smooth, variable = \x] plot({\x}, {1.5*(\x - 7)^2*(\x - 9)^2 + 1});
%omega_2
\draw [dashed, line width=0.5mm] (7.6, 2.08) -- (7.6, 0) node[anchor = north] {$\tilde \omega_2$};
\fill (7.6, 2.08) circle (0.07);
%omega_1
\draw [dashed, line width=0.5mm] (6.58, 2.5) node[anchor = south west]{$R_{ext}$} -- (6.58, 0) node[anchor = north] {$\tilde \omega_1$};
\fill (6.58, 2.5) circle (0.07);
%omega_c
\draw [dashed, line width=0.5mm] (8, 2.5) node[anchor = south west]{$R_c$} -- (8, 0) node[anchor = north] {$\tilde \omega_c$};
\fill (8, 2.5) circle (0.07);
\end{tikzpicture}
\caption{Illustration of the convergence factor $\rho$ as a function of $\widetilde{\omega}$ with different values of the parameter $p$. Left: $p \in I_c$. Right: $p \in I_r$.}
\label{fig:rhov1}
\end{figure} 
Then using Lemma~\ref{lem:maxomegav1}, the maximum in the min-max problem~\eqref{eq:P1} is given by
\[\max_{\widetilde{\omega}_1 \leq \widetilde{\omega} \leq \widetilde{\omega}_2} \rho(\widetilde{\omega}, p)
= \max \left\{\rho(\widetilde{\omega}_1, p),
\, 
R_c, 
\, 
\rho(\widetilde{\omega}_2, p) \right\}.\]
In this case, we can show that one of the minimal convergence factors can be obtained through the equioscillation property, i.e., $ \rho(\widetilde{\omega}_1, p) =\rho(\widetilde{\omega}_2, p)$, which leads to one of the optimized parameters $p^*$. We also observe that the interior local maximum $R_c$ might be greater than the convergence value at the endpoints with $p = p^*$, i.e., $R_c > \rho(\widetilde{\omega}_1, p^*) = \rho(\widetilde{\omega}_2, p^*)$. In that case, the maximum in the min-max problem~\eqref{eq:P1} is always $R_c$, and from its definition, $R_c$ is constant with respect to $p$. Thus, the minimum of the convergence factor is also attained when we move the parameter $p$ in an interval around $p^*$.

Solving the equality $ \rho(\widetilde{\omega}_1, p) =\rho(\widetilde{\omega}_2, p)$, we obtain a product of two polynomials of $p$,
\begin{equation}\label{eq:polysix}
\big(p^2 - 2\mu \widetilde{\omega}_1 \widetilde{\omega}_2\big)
\Big(\frac{p^4}{2}+(\mu \widetilde{\omega}_2-\widetilde{\omega}_1)(\widetilde{\omega}_2-\mu \widetilde{\omega}_1)p^2+2\mu^2 \widetilde{\omega}_1^2 \widetilde{\omega}_2^2 \Big) = 0.
\end{equation}
For the first polynomial $p^2 - 2\mu \widetilde{\omega}_1 \widetilde{\omega}_2$ in~\eqref{eq:polysix}, there is always one positive root $\sqrt{2 \mu \widetilde{\omega}_1 \widetilde{\omega}_2}$ lying in the interval $I_c$, as $\sqrt{\widetilde{\omega}_1 \widetilde{\omega}_2} \in [\widetilde{\omega}_1, \widetilde{\omega}_2]$. For the second polynomial in~\eqref{eq:polysix}, it is exactly the fourth-order polynomial~\eqref{eq:polyfour}, and we will study in the following its roots according to the value of $k_r$. 
%according to the value of $k_r=\frac{\widetilde{\omega}_2}{\widetilde{\omega}_1}$, we have the following three cases:
%\begin{enumerate}
%\item[(a)] if $k_r<h_2(\mu)$, the fourth-order polynomial~\eqref{eq:polyfour} has two positive roots;
%
%\item[(b)] if $k_r=h_2(\mu)$, the fourth-order polynomial~\eqref{eq:polyfour} has only one positive roots;
%
%\item[(c)] if $k_r>h_2(\mu)$, this implies that either $k_r>\mu$, or $ k_r<\mu$ and $\Delta <0$. Then the fourth-order polynomial~\eqref{eq:polyfour} has no positive root. \end{enumerate}
%Overall, there exists at least one positive root $p^*=\sqrt{2 \mu \widetilde{\omega}_1 \widetilde{\omega}_2}$ in the interval $I_c$.

Now, it remains to look at the optimized parameter $p^*$ in the intervals $I_l$ and $I_r$, and compare the results with those of $I_c$. The situations in these two intervals are very similar, and thus it is sufficient to consider only one case, for instance, $p \in I_r$. In this case, the local maximum point $\widetilde{\omega}_c = \frac{p}{\sqrt{2\mu}} \geq \widetilde{\omega}_2$, and thus lies on the right of the  interval $[\widetilde{\omega}_1,\widetilde{\omega}_2]$, as shown in Figure~\ref{fig:rhov1} on the right. In this case, we obtain once again from Lemma~\ref{lem:maxomegav1} that
\[\max_{\widetilde{\omega}_1 \leq \widetilde{\omega} \leq \widetilde{\omega}_2} \rho (\widetilde{\omega}, p) 
=
\max \left\{\rho(\widetilde{\omega}_1, p),  \rho(\widetilde{\omega}_2, p) \right\}.\]
When $p=\sqrt{2 \mu} \widetilde{\omega}_2$, we have $ \widetilde{\omega}_c =  \widetilde{\omega}_2$, and when $p$ takes other values in $I_r$, $\widetilde{\omega}_c$ moves away from $ \widetilde{\omega}_2$, as shown in Figure~\ref{fig:rhov1} on the right. Substituting $p=\sqrt{2 \mu} \widetilde{\omega}_2$ into~\eqref{eq:rhov1} and using the fact that $k_r = \frac{\widetilde{\omega}_2}{ \widetilde{\omega}_1}$, we obtain for the convergence factor at the endpoints $\widetilde{\omega}_1$ and $\widetilde{\omega}_2$ 
\[\begin{aligned}
\rho(\widetilde{\omega}_1, \sqrt{2 \mu} \widetilde{\omega}_2) &= R_{ext} := \sqrt{ \frac{(\sqrt{2 \mu} k_r-1)^2+1}{(\sqrt{2} k_r+\sqrt{\mu})^2+\mu}\frac{(\sqrt{2} k_r-\sqrt{\mu})^2+\mu}{(\sqrt{2 \mu} k_r+1)^2+1} },  
\\
\rho(\widetilde{\omega}_2, \sqrt{2 \mu} \widetilde{\omega}_2) &= R_c.
\end{aligned}\]
In particular, when $k_r>h_1(\mu)$, we have $R_{ext} > R_c$. To find the optimized parameter $p^*$, we need to compare $R_{ext}$ and $R_c$ to determine the minimum of the convergence factor $\rho$. According to the value of $k_r$, we have the following three cases:
\begin{enumerate}
\item[(i)] if $k_r > h_2(\mu)$, then as $h_2(\mu)>h_1(\mu)$, we have $k_r>h_1(\mu)$, which implies $R_{ext}>R_c$. In this case, the value $\rho(\widetilde{\omega}_1, p)$ increases as $p$ increases in the interval $I_r$, so the convergence factor cannot be improved for $p \in I_r$, and the minimal convergence factor can only be obtained when $p\in I_c$. Furthermore, when $k_r>h_2(\mu)$, there is no positive root for the fourth-order polynomial~\eqref{eq:polyfour}, thus, only one positive root exists for the sixth-order polynomial~\eqref{eq:polysix}, that is $p^*= \sqrt{2 \mu \widetilde{\omega}_1 \widetilde{\omega}_2}\in I_c$. Since the associated $\widetilde{\omega}_c = \frac{p^*}{\sqrt{2 \mu}} = \sqrt{\widetilde{\omega}_1 \widetilde{\omega}_2}$, which falls in the interval $[\widetilde{\omega}_1, \widetilde{\omega}_2]$; then from Lemma~\ref{lem:maxomegav1}, the maximum will be chosen either $R_c$ or $\rho(\widetilde{\omega}_1, p^*) = \rho(\widetilde{\omega}_2, p^*)$. If $\rho(\widetilde{\omega}_1, p^*) = \rho(\widetilde{\omega}_2, p^*) \geq R_c$, then this minimizer $p^*$ is unique for the min-max problem~\eqref{eq:P1}. Otherwise, the maximum is $R_c$, and the minimum of the min-max problem~\eqref{eq:P1} is also $R_c$. As $R_c$ is independent of $p$, it can be attained for any $p$ chosen in a closed interval around $p^*$;

\item[(ii)] if $h_1(\mu) < k_r \leq h_2(\mu)$, we obtain once again $R_{ext}>R_c$. As discussed above in (i), the convergence factor in this case cannot be improved for $p \in I_r$, and the minimal value of the convergence factor will only be obtained when $p\in I_c$. 
%Due to $R_{ext}>R_c$, we can know from Figure~\ref{fig:rhov1} that the roots of $ \rho(\widetilde{\omega}_1, p) =\rho(\widetilde{\omega}_2, p)$ cannot be in the intervals except $I_c$. 
Furthermore, the fourth-order polynomial~\eqref{eq:polyfour} has one positive root in $I_c$ if $k_r = h_2(\mu)$, and has two positive roots in $I_c$ if $k_r < h_2(\mu)$. This implies that the sixth-order polynomial~\eqref{eq:polysix} has at least two roots in $I_c$, and we have $\rho(\widetilde{\omega}_1, p) =\rho(\widetilde{\omega}_2, p) \leq R_c$.
%We can learn from Figure~\ref{fig:rhov1} that when $ \rho(\widetilde{\omega}_1, p) =\rho(\widetilde{\omega}_2, p)$ has at least two roots in $I_c$, $ \rho(\widetilde{\omega}_1, p) =\rho(\widetilde{\omega}_2, p)$ cannot be larger than $R_c$. 
Therefore, $R_c$ is the maximum value of $\rho$ for $\widetilde{\omega} \in [\widetilde{\omega}_1,\widetilde{\omega}_2]$. Then the minimum of the convergence factor is attained for any $p$ chosen in a closed interval around $p^*$; 

\item[(iii)] if $k_r \leq h_1(\mu)$, we have $R_{ext} \leq R_c$. Therefore, we can find a unique value $p_r \in  I_r$, $p_r \neq \sqrt{2\mu} \widetilde{\omega}_2$ that satisfies $\rho(\widetilde{\omega}_1, p_r) = \rho(\widetilde{\omega}_2, p_r)$. This then results in the fourth-order polynomial~\eqref{eq:polyfour}, and we have in particular that $R_c > \rho(\widetilde{\omega}_1, p_r) =\rho(\widetilde{\omega}_2, p_r)$. Furthermore, for $p \in  I_r$ and $p \neq \sqrt{2\mu} \widetilde{\omega}_2$, $\widetilde{\omega}_c \notin [\widetilde{\omega}_1, \widetilde{\omega}_2]$, then from Lemma~\ref{lem:maxomegav1}, the maximum will only be chosen between $\rho(\widetilde{\omega}_1, p)$ and $\rho(\widetilde{\omega}_2, p)$, from which we find the minimizer of the min-max problem~\eqref{eq:P1}. In particular, this minimum $\rho(\widetilde{\omega}_1, p_r)$ beats the best convergence factor obtained for $p\in I_c$. 
\end{enumerate}
Based on the similarity of the two intervals $I_r$ and $I_l$, we have respective results for $p\in I_l$. As all possible scenarios have been considered, this completes the proof. 
\end{proof}

%we can also add a comment here by saying that in numerics, $\mu^2 \geq 10$, so we are more in the case of the second theorem.

\subsection{Local transmission parameter: Version II}\label{sec:v2}
As discussed in Section~\ref{sec:v1}, the choice~\eqref{eq:sgmv1} of the transmission parameter $\sigma_j$ may not be optimal, as it only scales with respect to one diffusion coefficient. To improve it, we consider here a second choice of the local transmission parameters $\sigma_j$
\begin{equation}\label{eq:sgmv2}
\sigma_1 =\sqrt{\nu_2} q, \; \sigma_2=\sqrt{\nu_1} q,  \quad q > 0. 
\end{equation}
This choice now takes into account both diffusion coefficients $\nu_j$ but still with one free parameter $q$. Once again, the convergence of the optimized Schwarz algorithm~\eqref{eq:OSLaplace} is guaranteed by Theorem~\ref{thm:SuffCond} with $q$ positive. For this choice of $\sigma_j$, the convergence factor~\eqref{eq:rhoehatRapprox} becomes
\begin{equation}\label{eq:rhov2}
\rho(\widetilde{\omega}, q) = \sqrt{
\frac {(q- \widetilde{\omega})^2+\widetilde{\omega}^2 }{(q+\mu \widetilde{\omega})^2+\mu^2 \widetilde{\omega}^2} 
\cdot
\frac {(q - \widetilde{\omega})^2+  \widetilde{\omega}^2} {(q+\frac{1}{\mu}\widetilde{\omega})^2+\frac{1}{\mu^2} \widetilde{\omega}^2} 
},
\end{equation}
where $ \mu = \sqrt{\frac{\nu_1}{\nu_2}}$ as before. The related min-max problem~\eqref{eq:P} becomes
\begin{equation}\label{eq:P2}
\min_{ q > 0} \left(\max_{\widetilde{\omega}_1 \leq \widetilde{\omega} \leq \widetilde{\omega}_2} \rho (\widetilde{\omega}, q) \right),
\tag{P2}  
\end{equation}
which turns out to be much easier to analyze compared with the mix-max problem~\eqref{eq:P1}, and we can find a unique optimized transmission parameter $p$.

\begin{theorem}[Optimized transmission parameter: Version II]\label{thm:optparav3}
The unique optimized transmission parameter $q^*$ by solving the min-max problem~\eqref{eq:P2} is given by $q^{\ast} =\sqrt {2\widetilde{\omega}_1 \widetilde{\omega}_2} $.
\end{theorem}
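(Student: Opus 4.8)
The plan is to follow the same three–step strategy as for~\eqref{eq:P1}, but the analysis is much shorter because the two numerators in~\eqref{eq:rhov2} are identical. The starting observation is that $\rho(\widetilde{\omega},q)$ depends on $\widetilde{\omega}$ and $q$ only through the single ratio $r:=q/\widetilde{\omega}$: writing $q=r\widetilde{\omega}$ in~\eqref{eq:rhov2}, the factor $\widetilde{\omega}^2$ cancels everywhere and
\[\rho(\widetilde{\omega},q)^2=\Phi(r)^2:=\frac{\big((r-1)^2+1\big)^2}{\big((r+\mu)^2+\mu^2\big)\big((r+\tfrac1\mu)^2+\tfrac1{\mu^2}\big)}.\]
A direct computation shows $\Phi(0)=\Phi(+\infty)=1$ and, crucially, that $\Phi$ is invariant under the involution $r\mapsto 2/r$: the substitution maps $(r-1)^2+1$ to $\tfrac{2}{r^2}\big((r-1)^2+1\big)$, maps $(r+\mu)^2+\mu^2$ to $\tfrac{2\mu^2}{r^2}\big((r+\tfrac1\mu)^2+\tfrac1{\mu^2}\big)$ and maps $(r+\tfrac1\mu)^2+\tfrac1{\mu^2}$ to $\tfrac{2}{\mu^2 r^2}\big((r+\mu)^2+\mu^2\big)$, so the two denominator factors are merely interchanged and all the extra constants cancel. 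Hence $\Phi(r)=\Phi(2/r)$, with fixed point $r=\sqrt2$.

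Next I would pin down the monotonicity of $\Phi$, equivalently the signs of $\partial_q\rho$ and $\partial_{\widetilde{\omega}}\rho$ (which are opposite, since $\rho=\Phi(q/\widetilde{\omega})$). Clearing the strictly positive denominators in $\partial_r(\Phi^2)$ and using $(r-1)^2+1>0$, the sign reduces to that of a polynomial in $r$, and with $m:=\mu+\tfrac1\mu>2$ I expect it to factor cleanly as
\[\text{sign}\Big(\frac{\partial\rho}{\partial q}\Big)=\text{sign}\Big((q^2-2\widetilde{\omega}^2)\big(q^2+2(m-1)q\widetilde{\omega}+2\widetilde{\omega}^2\big)\Big),\qquad \text{sign}\Big(\frac{\partial\rho}{\partial \widetilde{\omega}}\Big)=-\,\text{sign}\Big(\frac{\partial\rho}{\partial q}\Big),\]
the second factor being strictly positive for $q,\widetilde{\omega}>0$. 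This factorization is the one place where a genuine (but short) calculation is required and is the main obstacle; note that the symmetry already forces $q=\sqrt2\,\widetilde{\omega}$ to be the critical value, so all that really needs checking is that the cofactor has no positive root. Granting this, $\widetilde{\omega}\mapsto\rho(\widetilde{\omega},q)$ is strictly decreasing on $(0,q/\sqrt2)$ and strictly increasing on $(q/\sqrt2,\infty)$, so the interior stationary point $q/\sqrt2$ is a local \emph{minimum} and
\[\max_{\widetilde{\omega}_1\le\widetilde{\omega}\le\widetilde{\omega}_2}\rho(\widetilde{\omega},q)=\max\{\rho(\widetilde{\omega}_1,q),\,\rho(\widetilde{\omega}_2,q)\};\]
in particular, unlike in Theorem~\ref{thm:optparav2}, no interior maximum can ever appear and no degenerate sub-cases arise. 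The same sign information shows that $q\mapsto\max_{\widetilde{\omega}}\rho(\widetilde{\omega},q)$ decreases for $q<\sqrt2\,\widetilde{\omega}_1$ and increases for $q>\sqrt2\,\widetilde{\omega}_2$, so the minimizer of~\eqref{eq:P2} lies in $[\sqrt2\,\widetilde{\omega}_1,\sqrt2\,\widetilde{\omega}_2]$, and on that interval $q\mapsto\rho(\widetilde{\omega}_1,q)$ is strictly increasing while $q\mapsto\rho(\widetilde{\omega}_2,q)$ is strictly decreasing.

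Finally I would close with the equioscillation argument. On $[\sqrt2\,\widetilde{\omega}_1,\sqrt2\,\widetilde{\omega}_2]$ the difference $q\mapsto\rho(\widetilde{\omega}_1,q)-\rho(\widetilde{\omega}_2,q)$ is strictly increasing, negative at the left endpoint and positive at the right endpoint, so it has a unique zero $q^*$, which is the unique minimizer of $\max\{\rho(\widetilde{\omega}_1,q),\rho(\widetilde{\omega}_2,q)\}$. At $q^*$ one has $\Phi(q^*/\widetilde{\omega}_1)=\Phi(q^*/\widetilde{\omega}_2)$ with $q^*/\widetilde{\omega}_2<\sqrt2<q^*/\widetilde{\omega}_1$; applying the symmetry to the larger argument gives $\Phi(q^*/\widetilde{\omega}_1)=\Phi(2\widetilde{\omega}_1/q^*)$ with $2\widetilde{\omega}_1/q^*<\sqrt2$, and since $\Phi$ is injective on $(0,\sqrt2)$ we deduce $q^*/\widetilde{\omega}_2=2\widetilde{\omega}_1/q^*$, i.e.\ $(q^*)^2=2\widetilde{\omega}_1\widetilde{\omega}_2$, so $q^*=\sqrt{2\widetilde{\omega}_1\widetilde{\omega}_2}$. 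This value indeed lies in $(\sqrt2\,\widetilde{\omega}_1,\sqrt2\,\widetilde{\omega}_2)$ because $k_r=\widetilde{\omega}_2/\widetilde{\omega}_1>1$, and uniqueness is built into the construction. As a consistency check, solving $\rho(\widetilde{\omega}_1,q)=\rho(\widetilde{\omega}_2,q)$ algebraically should produce a polynomial in $q^2$ whose only admissible factor is $q^2-2\widetilde{\omega}_1\widetilde{\omega}_2$, recovering $q^*$ directly.
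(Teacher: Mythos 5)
Your proposal is correct and follows essentially the same route as the paper: reduce to the sign of $\partial\rho/\partial q$ and $\partial\rho/\partial\widetilde{\omega}$ (your factorization $\mathrm{sign}\big((q^2-2\widetilde{\omega}^2)(q^2+2(m-1)q\widetilde{\omega}+2\widetilde{\omega}^2)\big)$ with $m=\mu+1/\mu$ does check out, and the cofactor is indeed positive), restrict $q$ to $[\sqrt2\,\widetilde{\omega}_1,\sqrt2\,\widetilde{\omega}_2]$, note the maximum over $\widetilde{\omega}$ sits at the endpoints, and conclude by equioscillation. The only embellishment is the scaling/involution observation $\Phi(r)=\Phi(2/r)$, which gives a slicker derivation of $q^*=\sqrt{2\widetilde{\omega}_1\widetilde{\omega}_2}$ than solving the equioscillation polynomial directly, but does not change the argument's structure.
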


\begin{proof}
The proof follows similar ideas in the proof of Lemma~\ref{lem:restrictpv1} and Lemma~\ref{lem:maxomegav1}. More precisely, we first take the partial derivative of the convergence factor~\eqref{eq:rhov2} with respect to the transmission parameter $q$ and the frequency $\widetilde{\omega}$ respectively,
\[\text{sign}\left( \frac{\partial \rho}{\partial q} \right)= \text{sign}(q^2-2\widetilde{\omega}^2),  
\quad 
\text{sign}\left(\frac{\partial \rho}{\partial \widetilde{\omega}}\right)= \text{sign}(2\widetilde{\omega}^2-q^2).\]
From the partial derivative with respect to $q$ and $\widetilde{\omega}$, we observe that:
\begin{enumerate}
\item[(i)] increasing $q$ will make the convergence factor~\eqref{eq:rhov2} decrease when $q<\sqrt{2} \widetilde{\omega}_1$, and decreasing $q$ will make the convergence factor~\eqref{eq:rhov2} decrease when $q>\sqrt{2} \widetilde{\omega}_2$.  Therefore, we can restrict the range of $q$ to the interval $[\sqrt{2}\widetilde{\omega}_1,  \sqrt{2}\widetilde{\omega}_2]$; 
\item[(ii)] from the partial derivative with respect to the frequency $\widetilde{\omega}$, the convergence factor $\rho(\widetilde{\omega}, q)$ is decreasing for $\widetilde{\omega} \in (\widetilde{\omega}_1,  \frac{q}{\sqrt{2}})$ and is increasing for $\widetilde{\omega} \in (\frac{q}{\sqrt{2}},  \widetilde{\omega}_2)$. This implies that the maximum of the convergence factor $\rho(\widetilde{\omega}, q)$ in the range $[\widetilde{\omega}_1, \widetilde{\omega}_2]$ is $\max\{\rho(\widetilde{\omega}_1, q), \,\rho(\widetilde{\omega}_2, q)\}$; 
\item[(iii)] as for determining the minimum in the min-max problem~\eqref{eq:P2}, we find that $\rho (\widetilde{\omega}_1,  q)$ is increasing, and $ \rho (\widetilde{\omega}_2,  q)$ is decreasing for $q \in [\sqrt{2}\widetilde{\omega}_1,  \sqrt{2}\widetilde{\omega}_2]$.
\end{enumerate}
We can thus conclude that the convergence factor is minimized uniformly by equioscillation, when its value at ${\omega}_1$ and ${\omega}_2$ are equal, i.e., $\rho (\widetilde{\omega}_1,  q^{\ast}) = \rho (\widetilde{\omega}_2,  q^{\ast})$. Solving this equation gives the unique optimized transmission parameter $q^{\ast} =\sqrt {2\widetilde{\omega}_1 \widetilde{\omega}_2}$.
\end{proof}

\subsection{Local transmission parameter: Version III}\label{sec:v3}
In Section~\ref{sec:v2}, we showed a choice~\eqref{eq:sgmv2} taking into account both two diffusion coefficients $\nu_j$ and funnd a unique optimized transmission parameter for the min-max problem~\eqref{eq:P2}. However, we still have only one parameter to tune with this choice for both subdomains $Q_1$ and $Q_2$. More generally, we can consider two transmission parameters,   
\begin{equation}\label{eq:sgmv3}
\sigma_1 = \sqrt{\nu_2} p, 
\ 
\sigma_2 = \sqrt{\nu_1} q,  
\quad  
p, \, q > 0. 
\end{equation}
with two free parameters each for subdomain. The convergence factor~\eqref{eq:rhoehatRapprox} for this choice becomes
\begin{equation}\label{eq:rhov3}
\rho(\widetilde{\omega}, p, q) = \sqrt{
\frac {(p - \widetilde{\omega})^2 + \widetilde{\omega}^2 }{(p + \mu \widetilde{\omega})^2 + \mu^2 \widetilde{\omega}^2} 
\cdot
\frac {(q - \widetilde{\omega})^2 + \widetilde{\omega}^2} {(q + \frac{1}{\mu}\widetilde{\omega})^2 + \frac{1}{\mu^2} \widetilde{\omega}^2} 
}.
\end{equation}
To guarantee convergence of the optimized Schwarz algorithm~\eqref{eq:OSLaplace}, we state next a sufficient condition for the parameters $p$ and $q$ based on Theorem~\ref{thm:SuffCond}.

\begin{corollary}[Sufficient condition]\label{cor:SuffCond}
Suppose that the transmission parameters $p, \, q > 0$ satisfy
\[0<q \leq p  
\quad 
\text{if} \ \nu_1 < \nu_2, 
\quad  
0<p \leq q 
\quad 
\text{if} \ \nu_2 < \nu_1.\]
Then, we have $\rho (\widetilde{\omega}, p, q)<1$ for all $\widetilde{\omega}\in [\widetilde{\omega}_1, \widetilde{\omega}_2]$.
\end{corollary}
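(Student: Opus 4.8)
The plan is to derive the corollary directly from Theorem~\ref{thm:SuffCond} by substituting the Version~III choice~\eqref{eq:sgmv3} of the transmission parameters into its hypotheses. Recall that Theorem~\ref{thm:SuffCond} guarantees $\rho(\widetilde{\omega},\sigma_1,\sigma_2)<1$ for all $\widetilde{\omega}\in[\widetilde{\omega}_1,\widetilde{\omega}_2]$ provided $0<\sigma_2\leq\sigma_1$ when $\nu_1<\nu_2$, and $0<\sigma_1\leq\sigma_2$ when $\nu_2<\nu_1$. Since the convergence factor~\eqref{eq:rhov3} is precisely~\eqref{eq:rhoehatRapprox} evaluated at $\sigma_1=\sqrt{\nu_2}\,p$ and $\sigma_2=\sqrt{\nu_1}\,q$, it suffices to check that the stated conditions on $p$ and $q$ translate into those of Theorem~\ref{thm:SuffCond}.

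First I would treat the case $\nu_1<\nu_2$. Here the assumption $0<q\leq p$ gives $\sigma_2=\sqrt{\nu_1}\,q\leq\sqrt{\nu_1}\,p<\sqrt{\nu_2}\,p=\sigma_1$, where the last strict inequality uses $\sqrt{\nu_1}<\sqrt{\nu_2}$ together with $p>0$; positivity of $\sigma_2$ is immediate. Thus $0<\sigma_2\leq\sigma_1$ and Theorem~\ref{thm:SuffCond} applies. Next I would treat the symmetric case $\nu_2<\nu_1$: the assumption $0<p\leq q$ gives $\sigma_1=\sqrt{\nu_2}\,p\leq\sqrt{\nu_2}\,q<\sqrt{\nu_1}\,q=\sigma_2$, so $0<\sigma_1\leq\sigma_2$, which is again exactly the hypothesis of Theorem~\ref{thm:SuffCond} in that regime. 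In both cases the theorem yields $\rho(\widetilde{\omega},p,q)<1$ on $[\widetilde{\omega}_1,\widetilde{\omega}_2]$, which is the claim.

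There is essentially no obstacle: the corollary is a direct specialization of Theorem~\ref{thm:SuffCond}, and the only thing to verify is the elementary chain of inequalities relating $(p,q)$ to $(\sigma_1,\sigma_2)$ through the scaling factors $\sqrt{\nu_1},\sqrt{\nu_2}$. As an alternative, equally short route, one may observe that after the substitution the simplified sufficient condition $(\sqrt{\nu_1}-\sqrt{\nu_2})(\sigma_1-\sigma_2)\leq 0$ appearing in the proof of Theorem~\ref{thm:SuffCond} becomes $(\sqrt{\nu_1}-\sqrt{\nu_2})(\sqrt{\nu_2}\,p-\sqrt{\nu_1}\,q)\leq 0$, whose sign one checks directly in each of the two cases; this also makes transparent that the conditions on $p$ and $q$ are sufficient but not necessary.
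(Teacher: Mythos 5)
Your proof is correct and matches the paper's intent: the corollary is stated as a direct consequence of Theorem~\ref{thm:SuffCond}, and the paper gives no separate proof precisely because the verification reduces to the elementary chain of inequalities you write out (e.g., $\sigma_2=\sqrt{\nu_1}\,q\leq\sqrt{\nu_1}\,p<\sqrt{\nu_2}\,p=\sigma_1$ when $\nu_1<\nu_2$ and $0<q\leq p$). Nothing further is needed.
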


The related min-max problem is
\begin{equation}\label{eq:P3}
\min_{p,  q > 0} \left(\max_{\widetilde{\omega}_1 \leq \widetilde{\omega} \leq \widetilde{\omega}_2} \rho (\widetilde{\omega}, p, q) \right). 
\tag{P3} 
\end{equation}
In the following, we consider parameters $p$ and $q$ that satisfy the conditions in Corollary~\ref{cor:SuffCond} to make the optimized Schwarz algorithm~\eqref{eq:OSLaplace} converge. To optimize these two parameters, we follow once again similar steps as in the previous two sections, that is, we first restrict the range for the parameters $(p, q)$ and locate possible values of local maximum point $\widetilde{\omega}$. Then, we analyze how these local maximum points behave when the parameters $(p, q)$ vary. The following result provides the order between $p$ and $q$ in terms of the diffusion coefficient ratio $\mu$.

\begin{lemma}[Order of $p$ and $q$]
If $\mu >1$,  the min-max problem~\eqref{eq:P3} is equivalent to
\[\min_{0<p \leq q} \left(\max_{\widetilde{\omega}_1 \leq \widetilde{\omega} \leq \widetilde{\omega}_2} \rho (\widetilde{\omega}, p, q) \right). \]
If $\mu <1$,  the min-max problem~\eqref{eq:P3} is equivalent to
\[\min_{0<q \leq p} \left(\max_{\widetilde{\omega}_1 \leq \widetilde{\omega} \leq \widetilde{\omega}_2} \rho (\widetilde{\omega}, p, q) \right). \]
\end{lemma}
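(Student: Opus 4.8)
The plan is to show the equivalence by comparing the convergence factor $\rho(\widetilde{\omega}, p, q)$ in~\eqref{eq:rhov3} against the version obtained by swapping the roles of the two parameters, i.e., $\rho(\widetilde{\omega}, q, p)$ evaluated with the reciprocal ratio. First I would observe that the two factors in~\eqref{eq:rhov3} have the same shape: the first factor involves $\mu$ in the denominator, and the second involves $1/\mu$. So, given a candidate pair $(p,q)$, I would compare $\rho(\widetilde{\omega}, p, q)$ with $\rho(\widetilde{\omega}, q, p)$. Writing out
\[
\frac{\rho(\widetilde{\omega}, p, q)^2}{\rho(\widetilde{\omega}, q, p)^2}
=
\frac{(p+\frac{1}{\mu}\widetilde{\omega})^2 + \frac{1}{\mu^2}\widetilde{\omega}^2}{(p+\mu\widetilde{\omega})^2+\mu^2\widetilde{\omega}^2}
\cdot
\frac{(q+\mu\widetilde{\omega})^2+\mu^2\widetilde{\omega}^2}{(q+\frac{1}{\mu}\widetilde{\omega})^2+\frac{1}{\mu^2}\widetilde{\omega}^2},
\]
where the numerator factors of $\rho$ (those involving $(p-\widetilde\omega)^2+\widetilde\omega^2$ and $(q-\widetilde\omega)^2+\widetilde\omega^2$) cancel completely since they are symmetric in the swap. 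Introducing the auxiliary function $g(r,a) := (r + a\widetilde{\omega})^2 + a^2\widetilde{\omega}^2$, the ratio above is $\frac{g(p,1/\mu)\,g(q,\mu)}{g(p,\mu)\,g(q,1/\mu)}$, so the comparison reduces to analyzing how $g(r,a)$ depends monotonically on $a$ for fixed $r>0$: since $g(r,a) = r^2 + 2ra\widetilde\omega + 2a^2\widetilde\omega^2$ is increasing in $a>0$, we have $g(r,\mu) > g(r,1/\mu)$ whenever $\mu > 1$.

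With this monotonicity in hand, the key step is the following: when $\mu>1$, the ratio $\frac{g(p,1/\mu)\,g(q,\mu)}{g(p,\mu)\,g(q,1/\mu)}$ is $\le 1$ precisely when $q \le p$ fails to help, i.e., one shows that if $p > q$ then $\rho(\widetilde\omega, q, p) \le \rho(\widetilde\omega, p, q)$ for every $\widetilde\omega$, so the pair $(q,p)$ (which now satisfies $0<q\le p$ — wait, we want $0<p\le q$) is at least as good. More carefully: given any feasible pair with $p>q$, I would show that swapping to $(\tilde p,\tilde q)=(q,p)$, which satisfies $0<\tilde p\le\tilde q$, does not increase $\max_{\widetilde\omega}\rho$; this is done by checking, via the $g$-monotonicity, that $\frac{g(p,1/\mu)}{g(p,\mu)} \le \frac{g(q,1/\mu)}{g(q,\mu)}$ when $p\ge q$ and $\mu>1$ — equivalently that $r\mapsto \frac{g(r,1/\mu)}{g(r,\mu)}$ is nonincreasing in $r>0$, which follows by differentiating in $r$ and using $\mu>1$. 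Hence restricting to $0<p\le q$ loses nothing, and symmetrically for $\mu<1$ one restricts to $0<q\le p$. Combined with Corollary~\ref{cor:SuffCond}, which already forces exactly these orderings for convergence, the restricted min-max problems have the same value and the same minimizers as~\eqref{eq:P3}.

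The main obstacle will be the monotonicity claim that $r \mapsto \frac{g(r,1/\mu)}{g(r,\mu)}$ is nonincreasing for $r>0$ when $\mu>1$: this requires differentiating the rational function and showing the resulting numerator has a definite sign for all $r>0$ and all admissible $\widetilde\omega$. The derivative's numerator is a polynomial in $r$ whose coefficients involve $\mu$ and $\widetilde\omega$, and one must verify it is $\le 0$; I expect this reduces, after factoring out positive terms, to the inequality $\mu - 1/\mu > 0$, i.e., $\mu>1$, but the bookkeeping of cross terms is the delicate part. An alternative, cleaner route to sidestep this is to argue directly at the level of the quantity $\rho(\widetilde\omega,p,q)^2 - \rho(\widetilde\omega,q,p)^2$: clearing denominators yields a difference that factors as $(\text{something manifestly positive})\times(p-q)\times(\mu-\tfrac1\mu)\times(\text{positive})$, from which both cases follow at once; I would try this factorization first, as it is the most economical path and mirrors the sufficient-condition computation in the proof of Theorem~\ref{thm:SuffCond}.
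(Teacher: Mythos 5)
Your proposal follows essentially the same route as the paper: compare $\rho(\widetilde{\omega},p,q)$ with the swapped $\rho(\widetilde{\omega},q,p)$, note that the numerator factors cancel, and reduce everything to a sign identity. Indeed, your ``alternative, cleaner route'' at the end is exactly the paper's proof: the paper states $\operatorname{sign}\bigl(\rho(\widetilde{\omega},p,q)^2-\rho(\widetilde{\omega},q,p)^2\bigr)=\operatorname{sign}\bigl((\mu-1)(p-q)\bigr)$, which is precisely the factorization you predict (one finds $g(q,\mu)g(p,\tfrac1\mu)-g(p,\mu)g(q,\tfrac1\mu)=2\widetilde{\omega}\,(p-q)\bigl(\mu-\tfrac1\mu\bigr)\bigl[pq+(p+q)\bigl(\mu+\tfrac1\mu\bigr)\widetilde{\omega}+2\widetilde{\omega}^2\bigr]$, with the last bracket manifestly positive).

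One correction to your first route: the directions are flipped. The desired conclusion $\rho(\widetilde{\omega},q,p)\le\rho(\widetilde{\omega},p,q)$ for $p>q$ is equivalent to $\frac{g(p,1/\mu)}{g(p,\mu)}\ge\frac{g(q,1/\mu)}{g(q,\mu)}$, i.e.\ to $r\mapsto\frac{g(r,1/\mu)}{g(r,\mu)}$ being \emph{nondecreasing}, not nonincreasing as you wrote; and it is in fact increasing, since the numerator of its derivative equals $2\bigl(\mu-\tfrac1\mu\bigr)\widetilde{\omega}\bigl[r^2+2\bigl(\mu+\tfrac1\mu\bigr)r\widetilde{\omega}+2\widetilde{\omega}^2\bigr]>0$ for $\mu>1$. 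With that sign corrected your first route works and coincides with the computation above. A minor further caveat: Corollary~\ref{cor:SuffCond} gives only a \emph{sufficient} condition for convergence, so it does not ``force'' the ordering; the equivalence of the min--max problems rests solely on the uniform (in $\widetilde{\omega}$) improvement under swapping, which is what you and the paper both establish.
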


\begin{proof}
Generally, we can consider to solve the min-max problem in the case $\mu >1$. The other case $\mu < 1$ turns to the case $\mu >1$ by interchanging $p$ and $q$ and replacing $\mu$ by $1/\mu$ in~\eqref{eq:rhov3}. Thus, we assume that $\mu>1$ and $p > q$. The convergence factor is given by~\eqref{eq:rhov3}. Interchanging the values of $p$ and $q$ in~\eqref{eq:rhov3}, this becomes
\[\rho(\widetilde{\omega}, q, p) =\sqrt{ 
\frac {(q- \widetilde{\omega})^2+\widetilde{\omega}^2 }{(q+\mu \widetilde{\omega})^2+\mu^2 \widetilde{\omega}^2} 
\cdot
\frac {(p - \widetilde{\omega})^2+  \widetilde{\omega}^2} {(p+\frac{1}{\mu}\widetilde{\omega})^2+\frac{1}{\mu^2} \widetilde{\omega}^2} 
} .\]
In particular, we have
\[\text{sign}\big(\rho(\widetilde{\omega}, p, q)^2-\rho(\widetilde{\omega}, q, p)^2\big) = \text{sign}\big((\mu-1)(p-q)\big).\]
In the case $\mu>1$ and $p>q$,  we have $\rho(\widetilde{\omega}, p, q)>\rho(\widetilde{\omega}, q, p)$, meaning that the convergence factor $\rho$ is uniformly improved by interchanging $p$ and $q$. Therefore, when $\mu>1$, it is sufficient to consider the parameters $p\leq q$. 
\end{proof}

From now on, we assume that $\mu > 1$ and hence $0<p \leq q$. Then, the conditions in Corollary~\ref{cor:SuffCond} are well satisfied. In this case, we find a similar result as Lemma~\ref{lem:restrictpv1}.

\begin{lemma}[Restrict $p$ and $q$]\label{lem:restrictpv3} 
When $\mu > 1$, we can restrict the range of the parameters $p$ and $q$ to the intervals
\[\begin{aligned}
p &\in \big[\widetilde{\omega}_1(\sqrt{\mu^2+1}-(\mu-1)),  
\; 
\widetilde{\omega}_2(\sqrt{\mu^2+1}-(\mu-1))\big],  
\\
q &\in \big[\widetilde{\omega}_1\frac{\sqrt{\mu^2+1}+(\mu-1)}{\mu},  
\;
\widetilde{\omega}_2\frac{\sqrt{\mu^2+1}+(\mu-1)}{\mu}\big].
\end{aligned}\]
\end{lemma}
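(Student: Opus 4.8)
The plan is to imitate the proof of Lemma~\ref{lem:restrictpv1}, but now exploiting the fact that the convergence factor~\eqref{eq:rhov3} factors as a product of two independent pieces, one depending only on $p$ and one depending only on $q$. Writing $\rho^2 = A(\widetilde\omega,p)\,B(\widetilde\omega,q)$ with
\[
A(\widetilde\omega,p) = \frac{(p-\widetilde\omega)^2+\widetilde\omega^2}{(p+\mu\widetilde\omega)^2+\mu^2\widetilde\omega^2},
\qquad
B(\widetilde\omega,q) = \frac{(q-\widetilde\omega)^2+\widetilde\omega^2}{(q+\tfrac1\mu\widetilde\omega)^2+\tfrac1{\mu^2}\widetilde\omega^2},
\]
I would first compute $\partial_p\log\rho = \tfrac12\,\partial_p\log A$ and determine its sign. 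A direct differentiation of the rational function $A$ shows that $\mathrm{sign}(\partial_p\rho)$ equals the sign of a quadratic in $p^2$ (with coefficients depending on $\mu$ and $\widetilde\omega$); since $\mu>1$, this quadratic has a single positive root in $p^2$, namely $p^2 = \widetilde\omega^2\big(\sqrt{\mu^2+1}-(\mu-1)\big)^2$. Thus $\rho$ is decreasing in $p$ for $p$ below this threshold and increasing above it. Exactly as in Lemma~\ref{lem:restrictpv1}, this forces the optimal $p$ to lie in $\big[\widetilde\omega_1(\sqrt{\mu^2+1}-(\mu-1)),\,\widetilde\omega_2(\sqrt{\mu^2+1}-(\mu-1))\big]$: if $p$ were below the left endpoint, increasing it decreases $\rho$ at every $\widetilde\omega\in[\widetilde\omega_1,\widetilde\omega_2]$, hence decreases the max; if above the right endpoint, decreasing $p$ does the same.

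Next I would repeat the argument for $q$ by differentiating $B$. By the symmetry $A(\widetilde\omega,p;\mu) = B(\widetilde\omega,p;1/\mu)$ — that is, the $q$-factor is obtained from the $p$-factor by replacing $\mu$ with $1/\mu$ — the threshold for $q$ is obtained from the one for $p$ by substituting $\mu\to 1/\mu$ and simplifying: $q^2 = \widetilde\omega^2\big(\sqrt{1/\mu^2+1}-(1/\mu-1)\big)^2 = \widetilde\omega^2\big(\tfrac{\sqrt{\mu^2+1}+(\mu-1)}{\mu}\big)^2$ (using $\mu>1$ so that $1/\mu-1<0$ and the expression is positive). The same monotonicity reasoning then confines the optimal $q$ to $\big[\widetilde\omega_1\tfrac{\sqrt{\mu^2+1}+(\mu-1)}{\mu},\,\widetilde\omega_2\tfrac{\sqrt{\mu^2+1}+(\mu-1)}{\mu}\big]$. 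Because the two factors depend on $p$ and $q$ separately, these two restrictions can be imposed simultaneously with no interaction, which is what makes this case cleaner than Version~I.

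The one point requiring a little care — the main obstacle — is verifying that the relevant quadratic in $p^2$ (and in $q^2$) genuinely has exactly one sign change on $(0,\infty)$, i.e. exactly one positive root, for all $\mu>1$ and all $\widetilde\omega>0$; unlike the Version~I case~\eqref{eq:signDrhoDp}, there is here no spurious extra factor like $(p^2-2\mu\widetilde\omega^2)$, so I expect the discriminant to be unconditionally positive and the constant term to have the sign forcing one positive and one negative root, but this should be checked explicitly (it reduces to an elementary inequality in $\mu$). Once the sign of $\partial_p\rho$ and $\partial_q\rho$ is pinned down, the restriction of the intervals follows verbatim from the endpoint argument already used in Lemma~\ref{lem:restrictpv1}, and I would simply refer back to it rather than rewriting it. A final sanity check: the product of the two thresholds' $\widetilde\omega$-coefficients is $\big(\sqrt{\mu^2+1}-(\mu-1)\big)\cdot\tfrac{\sqrt{\mu^2+1}+(\mu-1)}{\mu} = \tfrac{(\mu^2+1)-(\mu-1)^2}{\mu} = 2$, which is consistent with $p\le q$ when $\mu>1$ and matches the $q^*=\sqrt{2\widetilde\omega_1\widetilde\omega_2}$ found in Version~II when $p=q$.
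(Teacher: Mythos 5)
Your proposal is correct and follows essentially the same route as the paper: differentiate the two factors separately, observe that $\mathrm{sign}(\partial_p\rho)=\mathrm{sign}\big(p^2+2(\mu-1)\widetilde{\omega}\,p-2\mu\widetilde{\omega}^2\big)$ (note this is a quadratic in $p$, not in $p^2$, but it does have exactly one positive root, $\widetilde{\omega}(\sqrt{\mu^2+1}-(\mu-1))$, because its constant term is negative — exactly the check you flagged), and then apply the endpoint monotonicity argument of Lemma~\ref{lem:restrictpv1}. The only cosmetic difference is that the paper obtains the $q$-threshold by direct differentiation of the second factor, giving $\mathrm{sign}(\partial_q\rho)=\mathrm{sign}\big(\mu q^2-2(\mu-1)\widetilde{\omega}q-2\widetilde{\omega}^2\big)$, rather than via the $\mu\mapsto 1/\mu$ symmetry you invoke; both yield the same interval.
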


\begin{proof} 
Taking a partial derivative of the convergence factor~\eqref{eq:rhov3} with respect to the transmission parameters $p$ and $q$, we find
\[\begin{aligned}
\text{sign}\left( \frac{\partial \rho}{\partial p} \right) &= \text{sign}\left(p^2 + 2p(\mu-1)\widetilde{\omega} - 2\mu \widetilde{\omega}^2\right)\\
&=\left\{\begin{aligned}
&\text{positive}, &&\text{if} \ p> \widetilde{\omega}\big(\sqrt{\mu^2+1}-(\mu-1)\big), \\
&\text{negative}, &&\text{if} \ p< \widetilde{\omega}\big(\sqrt{\mu^2+1}-(\mu-1)\big).
\end{aligned}\right. 
\\
\text{sign}\left( \frac{\partial \rho}{\partial q} \right) &= \text{sign}\left(\mu q^2 - 2q(\mu-1) \widetilde{\omega} - 2\widetilde{\omega}^2\right)\\ 
&=\left\{\begin{aligned}
&\text{positive}, &&\text{if} \ q> \widetilde{\omega}\frac{\sqrt{\mu^2+1}+(\mu-1)}{\mu}, \\
&\text{negative}, &&\text{if} \ q< \widetilde{\omega}\frac{\sqrt{\mu^2+1}+(\mu-1)}{\mu}.
\end{aligned}\right.
\end{aligned}\]
Therefore, when $p < \widetilde{\omega}_1(\sqrt{\mu^2+1}-(\mu-1))$, increasing $p$ improves uniformly the convergence factor $\rho$, while when $p > \widetilde{\omega}_2(\sqrt{\mu^2+1}-(\mu-1))$, decreasing $p$ will improve uniformly the convergence factor $\rho$. Similar arguments hold for the transmission parameter $q$. Therefore, the two restriction intervals follow.
\end{proof}

From the range of $p$ and $q$, we observe that $\frac{p q}{2}$ is actually in the range of $[\widetilde{\omega}_1^2, \widetilde{\omega}_2^2]$. Furthermore, once we restrict the transmission parameters $p$ and $q$, we can find the local maxima of $\widetilde{\omega}$ as in Lemma~\ref{lem:maxomegav1}. Note also that in practice for common choices of $\widetilde{\omega}_j$, where $\widetilde{\omega}_2$ is much larger than $\widetilde{\omega}_1$, we numerically find that the convergence factor $\rho$ behaves as in Figure~\ref{fig:rhov3} when the optimized parameters are obtained. Thus, we consider in the following such convergence behavior and determine the associated optimized parameter pair $(p, q)$.
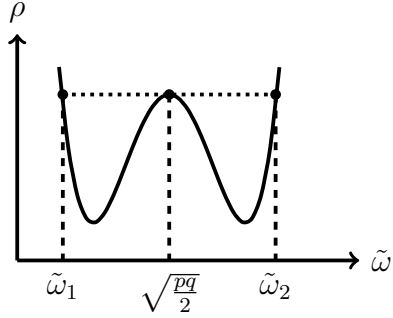
\begin{figure}
\centering
\begin{tikzpicture}
%case 1
\draw [line width=0.5mm, ->] (0, 0) -- (4.5, 0) node [anchor = west] {$\tilde \omega$};
\draw [line width=0.5mm, ->] (0, 0) -- (0, 3) node [anchor = south] {$\rho$};
%a two potential function 1+(x-1)^2(x-3)^2 
\draw [domain = 0.55:3.45, line width=0.5mm, smooth, variable = \x] plot({\x}, {1.7*(\x - 1)^2*(\x - 3)^2 + 0.5});
%omega_c
\draw [dashed, line width=0.5mm] (2, 2.2) -- (2, 0) node[anchor = north] {$\sqrt{\frac{pq}{2}}$};
\fill (2, 2.2) circle (0.07);
%omega_1
\draw [dashed, line width=0.5mm] (0.6, 2.2) -- (0.6, 0) node[anchor = north] {$\tilde \omega_1$};
\fill (0.6, 2.2) circle (0.07);
%omega_2
\draw [dashed, line width=0.5mm] (3.4, 2.2) -- (3.4, 0) node[anchor = north] {$\tilde \omega_2$};
\fill (3.4, 2.2) circle (0.07);
%equal
\draw [dotted, line width=0.5mm] (0.6, 2.2) -- (3.4, 2.2);
\end{tikzpicture}
\caption{Illustration of the convergence factor with respect to $\widetilde{\omega}$, when the optimized $p$ and $q$ are obtained.}
\label{fig:rhov3}
\end{figure}

\begin{lemma}[Local maxima of $ \widetilde{\omega}$]\label{lem:maxomegav3} 
For $\widetilde{\omega}\in [\widetilde{\omega}_1, \widetilde{\omega}_2]$,  the maximum of the convergence factor is 
\begin{equation}\label{eq:maxomega}
\max_{\widetilde{\omega}_1 \leq \widetilde{\omega} \leq \widetilde{\omega}_2} \rho(\widetilde{\omega}, p, q) = \max\left\{\rho(\widetilde{\omega}_1, p, q), \; \rho(\sqrt{\frac{pq}{2}}, p, q), \; \rho(\widetilde{\omega}_2, p, q)\right\}.
\end{equation}  
\end{lemma}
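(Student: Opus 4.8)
My plan is to follow the derivative‑analysis scheme of Lemmas~\ref{lem:restrictpv1} and~\ref{lem:maxomegav1}: fix an admissible pair $(p,q)$ in the restricted rectangle of Lemma~\ref{lem:restrictpv3}, study the sign of $\partial\rho/\partial\widetilde{\omega}$ on $(\widetilde{\omega}_1,\widetilde{\omega}_2)$, locate the interior critical points, and show that only one of them can be an interior local maximum. Since $\rho>0$ it is equivalent to look at $\partial_{\widetilde{\omega}}\log\rho^2$. Reading $\rho^2$ off~\eqref{eq:rhov3} as a ratio of four quadratics in $\widetilde{\omega}$, each a sum of two squares and hence strictly positive, the logarithmic derivative is a rational function with positive denominator, so $\text{sign}(\partial\rho/\partial\widetilde{\omega})=\text{sign}(P(\widetilde{\omega}))$ for an explicit polynomial $P$ of degree six in $\widetilde{\omega}$.

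The structural point that produces the value $\sqrt{pq/2}$ is that $\rho^2$ is invariant under the involution $\widetilde{\omega}\mapsto pq/(2\widetilde{\omega})$, whose unique positive fixed point is $\widetilde{\omega}=\sqrt{pq/2}$: substituting $\widetilde{\omega}=pq/(2v)$ into each of the four quadratics in~\eqref{eq:rhov3} reproduces the corresponding quadratic up to a common factor $p^2q^2/(4v^4)$ that cancels between numerator and denominator. Consequently $\partial\rho/\partial\widetilde{\omega}$ vanishes at $\widetilde{\omega}=\sqrt{pq/2}$, and in fact $(2\widetilde{\omega}^2-pq)$ divides $P$, so that $P(\widetilde{\omega})=(2\widetilde{\omega}^2-pq)\,Q(\widetilde{\omega})$ with $Q$ of degree four and positive leading coefficient. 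The same invariance forces $Q$ to be of reciprocal type, $Q(\widetilde{\omega})=\widetilde{\omega}^2\,\widetilde{Q}\big(\widetilde{\omega}+pq/(2\widetilde{\omega})\big)$ for a quadratic $\widetilde{Q}$; hence for $\widetilde{\omega}>0$ the sign of $\partial\rho/\partial\widetilde{\omega}$ is governed entirely by $\text{sign}(2\widetilde{\omega}^2-pq)$ together with $\text{sign}\big(\widetilde{Q}(y)\big)$ at $y=\widetilde{\omega}+pq/(2\widetilde{\omega})$, which ranges over $[\sqrt{2pq},\infty)$, the minimum $\sqrt{2pq}$ being attained exactly at the fixed point.

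To finish, recall from the discussion right after Lemma~\ref{lem:restrictpv3} that in the restricted rectangle $pq/2\in[\widetilde{\omega}_1^2,\widetilde{\omega}_2^2]$, so $\sqrt{pq/2}\in[\widetilde{\omega}_1,\widetilde{\omega}_2]$. Since $\widetilde{\omega}\mapsto y$ decreases on $(0,\sqrt{pq/2})$ and increases on $(\sqrt{pq/2},\infty)$, every admissible value of $y$ is attained symmetrically on the two sides of the fixed point, so the interior critical points of $\rho$ consist of $\sqrt{pq/2}$ together with pairs of points symmetric about it — the two $\widetilde{\omega}$‑values mapping to a root of $\widetilde{Q}$ that lies above $\sqrt{2pq}$ — and $\rho$ is monotone between consecutive critical points. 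In the regime considered before the statement, where $\widetilde{\omega}_2$ is much larger than $\widetilde{\omega}_1$ and $\rho$ has the shape of Figure~\ref{fig:rhov3}, $\widetilde{Q}$ has at most one root exceeding $\sqrt{2pq}$, so each such pair consists of two local minima, making $\sqrt{pq/2}$ the only possible interior local maximum; together with the two endpoints this forces the maximum over $[\widetilde{\omega}_1,\widetilde{\omega}_2]$ to be attained at $\widetilde{\omega}_1$, $\sqrt{pq/2}$, or $\widetilde{\omega}_2$, which is~\eqref{eq:maxomega}.

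The delicate step is this last one: a priori $\widetilde{Q}$ could have two roots above $\sqrt{2pq}$, which would produce two extra interior local maxima flanking $\sqrt{pq/2}$ and violate~\eqref{eq:maxomega}. Excluding this is precisely where one must exploit the constraints on $(p,q)$ from Lemma~\ref{lem:restrictpv3} and the separation $\widetilde{\omega}_1\ll\widetilde{\omega}_2$, consistent with the paper's warning that the naive equioscillation picture can be misleading here.
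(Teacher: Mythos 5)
Your involution observation is correct and elegant: substituting $\widetilde{\omega}\mapsto pq/(2\widetilde{\omega})$ does map each of the four quadratics in~\eqref{eq:rhov3} to another one of them up to a common factor (the numerator quadratic in $p$ goes to the numerator quadratic in $q$ and vice versa, and likewise for the denominators), so $\rho^2$ is invariant and $\sqrt{pq/2}$ is automatically a critical point. The problem is that you stop exactly where the proof has to be finished. You yourself flag that ``a priori $\widetilde{Q}$ could have two roots above $\sqrt{2pq}$, which would produce two extra interior local maxima flanking $\sqrt{pq/2}$ and violate~\eqref{eq:maxomega},'' and then you never exclude this; you only assert that it does not happen ``in the regime of Figure~\ref{fig:rhov3},'' which is circular, since the shape of Figure~\ref{fig:rhov3} is precisely what the lemma is supposed to certify. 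As it stands, the conclusion~\eqref{eq:maxomega} is not established.

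The gap closes as soon as you actually compute the sign factorization instead of arguing only structurally. The paper's computation~\eqref{eq:DrhoDomega} shows that $\mathrm{sign}(\partial\rho/\partial\widetilde{\omega})$ is $(2\widetilde{\omega}^2-pq)$ times an explicit \emph{quadratic} $\widetilde{\omega}^2+bp\widetilde{\omega}+\tfrac{\gamma p^2}{2}$ (with $\gamma=q/p$), not a quartic: in your notation $\widetilde{Q}(y)=y+bp$ is \emph{linear}, because the reciprocal quartic you posit degenerates. A linear $\widetilde{Q}$ has one root, so the only possible interior critical points are $\sqrt{pq/2}$ and one involution-symmetric pair $r_1<\sqrt{pq/2}<r_2$ with $r_1r_2=pq/2$; checking signs on the four resulting subintervals shows $r_1,r_2$ can only be local minima and $\sqrt{pq/2}$ the only candidate interior maximum (and if the quadratic is nonnegative everywhere, there is no interior maximum at all). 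In either case the maximum over $[\widetilde{\omega}_1,\widetilde{\omega}_2]$ is attained among $\widetilde{\omega}_1$, $\sqrt{pq/2}$, $\widetilde{\omega}_2$, which is~\eqref{eq:maxomega}. (For fairness: the paper's own proof also leans on the numerically observed Figure~\ref{fig:rhov3} to decide the sign of the quadratic at $\sqrt{pq/2}$, but its explicit quadratic factorization already caps the number of interior critical points at three, which is what your argument is missing.) Also, your claim that the sign polynomial has degree six is inconsistent with this explicit degree-four factorization; that discrepancy is a symptom of not having carried out the computation on which the whole final step depends.
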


\begin{proof} 
Taking a partial derivative of~\eqref{eq:rhov3} with respect to $\widetilde{\omega}$, we get
\begin{equation}\label{eq:DrhoDomega}
\begin{aligned}
\text{sign}\left( \frac{\partial \rho}{\partial \widetilde{\omega}} \right)
&=\text{sign}\Big( (2\widetilde{\omega}^2 - pq) \times\\
&\big( \widetilde{\omega}^2 + \frac{(\mu -1)(\gamma \mu -1)-\sqrt{(\mu^2 +1)(\gamma^2 \mu^2 +1)}}{2 \mu}p\widetilde{\omega} + \frac{\gamma p^2}{2} \big)\Big),
\end{aligned}
\end{equation}  
where we introduced the ratio $\gamma := \frac{q}{p}$. When the first polynomial of $\widetilde{\omega}$ in~\eqref{eq:DrhoDomega} equals zero, i.e., $2\widetilde{\omega}^2 - p q = 0$, we obtain that $\widetilde{\omega} = \sqrt{\frac{p q}{2}}$. To study whether this value is a local maximum point for $\widetilde{\omega} \in [\widetilde{\omega}_1, \widetilde{\omega}_2]$, we need to know the sign of the second polynomial in~\eqref{eq:DrhoDomega} near the point $\widetilde{\omega} = \sqrt{\frac{p q}{2}}$. Using the ratio $\gamma$, we have $\widetilde{\omega}^2 = \frac{\gamma p^2}{2}$. Substituting this into the second polynomial of $\widetilde{\omega}$ in~\eqref{eq:DrhoDomega}, we find
\begin{equation}\label{eq:Drhoomegapart}
\gamma p^2 + \sqrt{\frac{\gamma}{2}}\frac{(\mu -1)(\gamma \mu -1)-\sqrt{(\mu^2 +1)(\gamma^2 \mu^2 +1)}}{2 \mu}p^2.
\end{equation} 
Supposing that~\eqref{eq:Drhoomegapart} is nonnegative, we get
\[\frac{(\mu -1)(\gamma \mu -1)-\sqrt{(\mu^2 +1)(\gamma^2 \mu^2 +1)}}{2 \mu} \geq -\sqrt{2 \gamma}.\] 
We can then bound the second polynomial in~\eqref{eq:DrhoDomega} by
\[\begin{aligned}
\widetilde{\omega}^2 + \frac{(\mu-1)(\gamma\mu-1) - \sqrt{(\mu^2+1)(\gamma^2 \mu^2+1)}}{2 \mu}p\widetilde{\omega} + \frac{\gamma p^2}{2}  & \geq \\
\widetilde{\omega}^2 - \sqrt{2 \gamma}  p\widetilde{\omega} + \frac{\gamma p^2}{2} &= ( \widetilde{\omega}-\frac{\sqrt{2 \gamma} p }{2})^2  \geq 0.
\end{aligned}\]
This implies that the second polynomial in~\eqref{eq:DrhoDomega} is nonnegative, and the sign of the partial derivative only depends on the first polynomial in~\eqref{eq:DrhoDomega}, that is, $\rho(\widetilde{\omega}, p, q)$ is decreasing for $\widetilde{\omega} \in [\widetilde{\omega}_1, \sqrt{\frac{p q}{2}}]$ and is increasing for $\widetilde{\omega} \in [\sqrt{\frac{p q}{2}}, \widetilde{\omega}_2]$. This contradicts the fact that the convergence $\rho$ behaves as in Figure~\ref{fig:rhov3}. Therefore, the equation~\eqref{eq:Drhoomegapart} is negative, and the second polynomial in~\eqref{eq:DrhoDomega} is also negative when $\widetilde{\omega}^2= \frac{p q}{2}$. For this reason, the convergence factor $\rho$ has a local maximum in $\widetilde{\omega}$ at $\sqrt{\frac{p q}{2}}$. According to the range of the transmission parameters $p$ and $q$, we have $\sqrt{\frac{p q}{2}}\in [\widetilde{\omega}_1, \widetilde{\omega}_2]$. Therefore,  the maximum value of the convergence factor $\rho(\widetilde{\omega}, p, q)$ for $\widetilde{\omega}\in[\widetilde{\omega}_1, \widetilde{\omega}_2]$ is given by~\eqref{eq:maxomega}.
\end{proof}

With the help of Lemma~\ref{lem:restrictpv3} and Lemma~\ref{lem:maxomegav3}, we obtain a similar result as Theorem~\ref{thm:optparav1} and Theorem~\ref{thm:optparav3} that the optimized transmission parameter pair $(p^*, q^*)$ can be obtained by an equioscillation of these three local maxima.

\begin{theorem}[Optimized transmission parameters: Version III] 
When $\mu> 1$, the unique minimizer pair $(p^*,  q^*)$ of Problem~\eqref{eq:P3} is the solution of the system of the two equations
\[\rho(\widetilde{\omega}_1, \, p^*, \, q^*) = \rho(\widetilde{\omega}_2, \, p^*, \, q^*),
%p^* q^* = 2\widetilde{\omega}_1 \widetilde{\omega}_2, 
\quad
\rho(\widetilde{\omega}_1, \, p^*, \, q^*) = \rho(\sqrt{\widetilde{\omega}_1 \widetilde{\omega}_2}, \, p^*, \, q^*).\]
\end{theorem}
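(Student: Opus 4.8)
The plan is to mirror the three-step strategy already carried out for Versions~I and~II, now in two parameters. By Lemma~\ref{lem:restrictpv3}, the search for $(p,q)$ can be confined to the product of two compact intervals, and by Lemma~\ref{lem:maxomegav3} the inner maximum over $\widetilde{\omega}$ is attained at one of the three candidate points $\widetilde{\omega}_1$, $\sqrt{pq/2}$, $\widetilde{\omega}_2$. So the min-max problem~\eqref{eq:P3} reduces to minimizing the maximum of the three values
$\rho(\widetilde{\omega}_1,p,q)$, $\rho(\sqrt{pq/2},p,q)$, $\rho(\widetilde{\omega}_2,p,q)$ over the restricted box. The claim is that at the optimum all three are equal, and the asserted system encodes exactly that equioscillation, using $\sqrt{pq/2}=\sqrt{\widetilde{\omega}_1\widetilde{\omega}_2}$ at the solution (which is consistent since $pq/2\in[\widetilde{\omega}_1^2,\widetilde{\omega}_2^2]$ by Lemma~\ref{lem:restrictpv3}).

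First I would record the monotonicity of the three candidate values as functions of the two parameters, reusing the sign computations from the proof of Lemma~\ref{lem:restrictpv3}. Since $\partial\rho/\partial p$ changes sign at $p=\widetilde{\omega}(\sqrt{\mu^2+1}-(\mu-1))$ and $\partial\rho/\partial q$ at the analogous threshold for $q$, one checks that on the restricted intervals $\rho(\widetilde{\omega}_1,p,q)$ is increasing in both $p$ and $q$ while $\rho(\widetilde{\omega}_2,p,q)$ is decreasing in both; the interior value $\rho(\sqrt{pq/2},p,q)$ must be analyzed by substituting $\widetilde{\omega}^2=pq/2$ into~\eqref{eq:rhov3} and examining its behavior in $p$ and $q$ separately (it turns out to depend only on $\mu$ and the product $pq$ through the evaluation point, combined with the explicit $p,q$ in the denominators). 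These monotonicities are the engine of the argument: moving $(p,q)$ in a direction that decreases one endpoint value forces another to increase, so no direction improves the maximum once equioscillation holds.

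Next I would argue existence and uniqueness of a solution to the two-equation system. The first equation $\rho(\widetilde{\omega}_1,p,q)=\rho(\widetilde{\omega}_2,p,q)$ defines (via the opposing monotonicities) a curve in the $(p,q)$-box along which the common endpoint value varies monotonically; the second equation $\rho(\widetilde{\omega}_1,p,q)=\rho(\sqrt{\widetilde{\omega}_1\widetilde{\omega}_2},p,q)$ cuts this curve. A cleaner route is to clear denominators: $\rho(\widetilde{\omega}_1,p,q)^2=\rho(\widetilde{\omega}_2,p,q)^2$ and $\rho(\widetilde{\omega}_1,p,q)^2=\rho(\sqrt{\widetilde{\omega}_1\widetilde{\omega}_2},p,q)^2$ are each polynomial identities in $p,q$; factoring out spurious roots (as happened in~\eqref{eq:polysix}) should leave a system with a single positive solution in the restricted box. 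Then a standard equioscillation/exchange argument finishes: if at a minimizer the three values were not all equal, the strict monotonicities let us perturb $(p,q)$ to strictly decrease the largest, contradicting optimality; and conversely the unique equioscillation point is a minimizer because any move away increases at least one of the three values above the common level.

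The main obstacle I anticipate is controlling the interior value $\rho(\sqrt{pq/2},p,q)$: unlike Version~I, where the interior maximum $R_c$ was \emph{constant} in the single parameter, here it genuinely varies with $(p,q)$, so one cannot dismiss cases where it dominates as easily. I would need to verify that, over the restricted box and under the Figure~\ref{fig:rhov3} convergence profile assumed in Lemma~\ref{lem:maxomegav3}, the three-way equioscillation is simultaneously achievable and that the configuration where only the two endpoints equioscillate (with the interior value strictly below) cannot be optimal — essentially showing the interior value is not already above the endpoint level at the endpoint-equioscillation point, which is where the hypothesis $\widetilde{\omega}_2\gg\widetilde{\omega}_1$ and the sign analysis of~\eqref{eq:Drhoomegapart} do the work. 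Establishing that this system of two polynomial equations has exactly one admissible root, rather than merely at least one, is the delicate bookkeeping step.
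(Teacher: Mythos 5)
Your proposal follows essentially the same route as the paper: equioscillation of the three candidate maxima from Lemma~\ref{lem:maxomegav3}, where the endpoint equation $\rho(\widetilde{\omega}_1,p,q)=\rho(\widetilde{\omega}_2,p,q)$ reduces algebraically to $pq=2\widetilde{\omega}_1\widetilde{\omega}_2$ (your ``curve''), turning the second equation into a one-parameter polynomial equation on the restricted interval. The uniqueness of the admissible root, which you rightly flag as the delicate step, is in fact settled in the paper only by asymptotic analysis and a numerical check (see the remark following the theorem), so your caution there matches the actual state of the argument.
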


\begin{proof} 
According to the equioscillation principle, we need to have at the endpoints of the frequency $\widetilde{\omega}$ that $\rho(\widetilde{\omega}_1, p, q) = \rho(\widetilde{\omega}_2, p, q)$ to acquire the minimum of the convergence factor $\rho$. After some algebraic simplification, we obtain $p q=2 \widetilde{\omega}_1 \widetilde{\omega}_2$. This then enables us to reduce the range of the parameter to $p \in I_p := [\widetilde{\omega}_1(\sqrt{\mu^2+1}-(\mu-1)),  \sqrt{2 \widetilde{\omega}_1 \widetilde{\omega}_2}]$, and the min-max problem~\eqref{eq:P3} becomes
\[\min_{p\in I_p} \big( \max\{R_1(p), \; R_c(p)\} \big), 
\
R_1(p):=\rho(\widetilde{\omega}_1, p, \frac{2 \widetilde{\omega}_1 \widetilde{\omega}_2}{p}), 
\,
R_c(p):=\rho(\sqrt{\widetilde{\omega}_1 \widetilde{\omega}_2}, p, \frac{2 \widetilde{\omega}_1 \widetilde{\omega}_2}{p}).\]
Using once again the equioscillation principle, the optimized parameters $p^*$ can be found when $R_1(p) = R_c(p)$ for $p\in I_p$, which can be reduced to the equation
\begin{equation}\label{eq:V3solution} 
\frac{(p-\widetilde{\omega}_1)^2+\widetilde{\omega}_1^2}{(p+\mu \widetilde{\omega}_1)^2+\mu^2 \widetilde{\omega}_1^2}
\cdot 
\frac{(p-\widetilde{\omega}_2)^2+\widetilde{\omega}_2^2}{(p+ \mu \widetilde{\omega}_2)^2+\mu^2\widetilde{\omega}_2^2}
=
\Big(\frac{(p- \sqrt{\widetilde{\omega}_1 \widetilde{\omega}_2})^2+\widetilde{\omega}_1 \widetilde{\omega}_2}{(p+\mu \sqrt{\widetilde{\omega}_1 \widetilde{\omega}_2})^2+\mu^2 \widetilde{\omega}_1 \widetilde{\omega}_2}\Big)^2 .
\end{equation}
Solving then this polynomial of $p$, we can identify the optimized transmission parameters. Note that there exist closed forms for the roots of this polynomial. Among all, we can list three simple solutions, that are $0$ and $\pm i\sqrt{2 \mu \widetilde{\omega}_1 \widetilde{\omega}_2}$, the other roots are much more complicated. In practice, when the time step $\Delta t$ is small, the frequency $\widetilde{\omega}_2 = \sqrt{\frac{\pi}{2\Delta t}}$ is much greater than $\widetilde{\omega}_1 = \sqrt{\frac{\pi}{4T}}$. In this case, we can use asymptotic analysis and find an approximate solution $p^* \approx \frac{2 \mu}{\mu-1}\widetilde{\omega}_1$, which lies in the interval $I_p$. Overall, for all the roots, we find one unique real root $p^* \in I_p$, and use once again the fact that $p q=2 \widetilde{\omega}_1 \widetilde{\omega}_2$ to find $q^*$, and this completes the proof. 
\end{proof}

\begin{remark}
To avoid complex and expensive calculation, we can show numerically the graph of~\eqref{eq:V3solution} in Figure~\ref{fig:root} where $p\in I_p$ with a set of $(\widetilde{\omega}_1, \widetilde{\omega}_2, \mu)$. It can be seen that there exists a unique root in~\eqref{eq:V3solution} for $p\in I_p$. Note that the behavior illustrated in Figure~\ref{fig:root} remains similar for all our numerical experiments with different sets of $(\widetilde{\omega}_1, \widetilde{\omega}_2, \mu)$.
\begin{figure}
\centering
\includegraphics[scale=0.25]{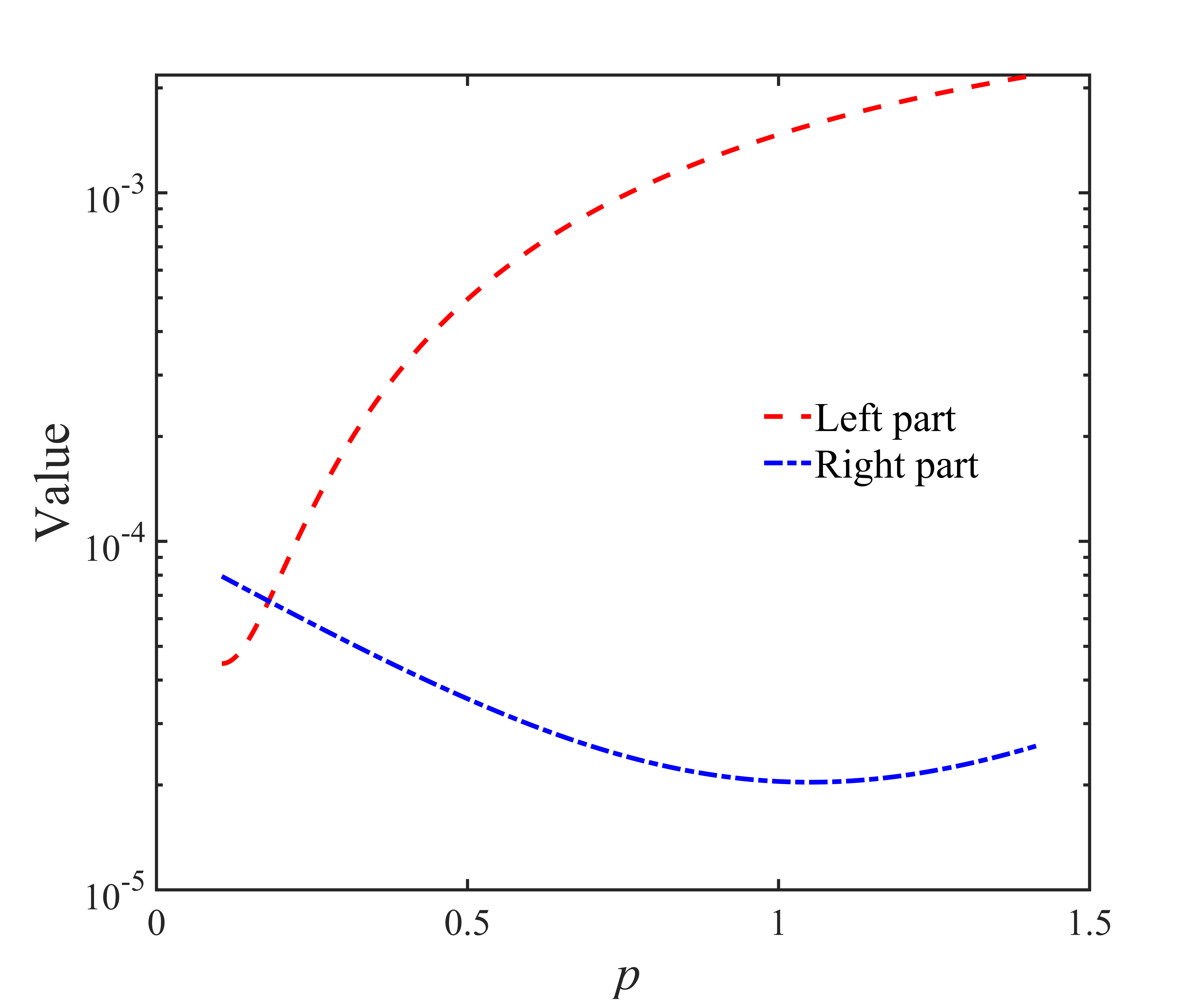}
\caption{Illustration of the left and rights part in~\eqref{eq:V3solution} for $p\in I_p$. }
\label{fig:root}
\end{figure}
\end{remark}

%Wolfram alpha: solve for x in ((x - a)^2 + a^2) / ((x + ca)^2 + c^2*a^2) * ((x - b)^2 + b^2) / ((x + cb)^2 + c^2*b^2) = (((x - sqrt(a*b))^2 + a*b) / ((x + c*sqrt(a*b))^2 + c^2*a*b))^2

\section{Numerical Experiments}\label{sec:4}
We now show some numerical experiments to compare the performance of the three local approximations of the optimal operator $\sigma_j$ discussed in Section~\ref{sec:3}. For our numerical tests, we consider solving the problem~\eqref{eq:heat} in a one-dimensional space domain $\Omega=(0, 1)$ and for a fixed final time $T=5$. Furthermore, we take a source term $f=0$, a constant initial condition $u_0=20$ and a homogenous Dirichlet boundary condition $g=0$. The space domain $\Omega$ is decomposed into two nonoverlapping subdomains $\Omega _1=(0, \frac{1}{2})$ and $\Omega_2=(\frac{1}{2}, 1)$. In all numerical experiments, the heat diffusion coefficients are $\nu_1=1$ and $\nu_2 = \frac{1}{\mu^2}$, where the ratio $\mu^2 = \frac{\nu_1}{\nu_2}$ is always chosen to be greater than 1. We use a finite element discretization in space with a uniform mesh size $\Delta x$, and a backward Euler discretization in time with a constant time step $\Delta t$. In the Schwarz iteration, we use the $L^{\infty}$ error
\[ e^n:=\|\mathbf{U}-\mathbf{u}^n\|_{\infty}, \]
where $\mathbf{U}$ is the discrete global solution of the problem~\eqref{eq:heat} and $\mathbf{u}^n$ is the combined solution of the subdomains at iteration $n$.

\subsection{Impact of the ratio $\mu$}
We first test the impact of the heat diffusion coefficient ratio $\mu$. For a given mesh size $\Delta x = 1 / 40$ and a time step $\Delta t=1 / 40$, we show in Figure~\ref{fig:2mu} the convergence behavior of the three local transmission conditions for the two different ratios $\frac{\nu_1}{\nu_2}=[10, 10^2]$.
\begin{figure}
\centering
\includegraphics[scale=0.23]{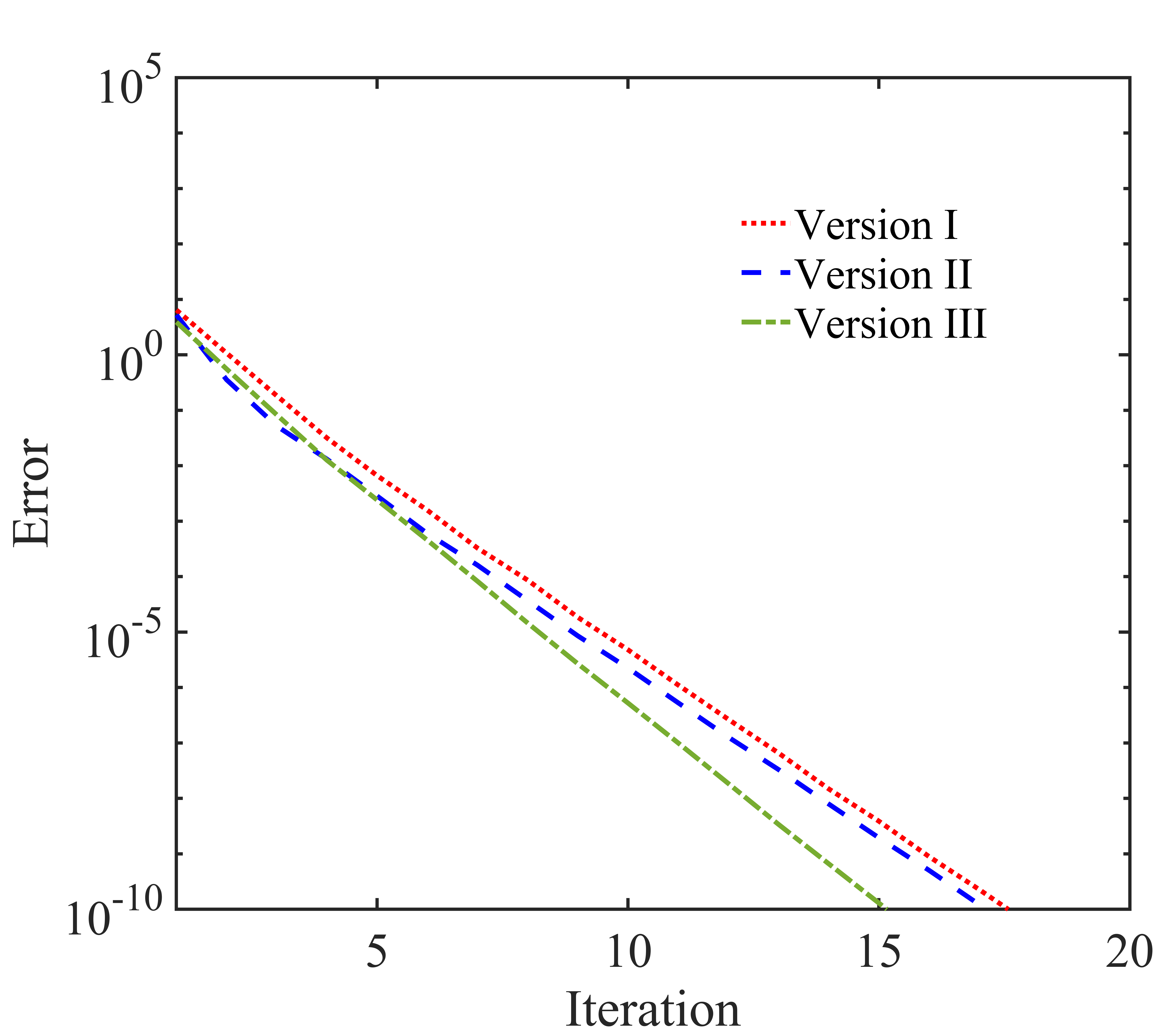}
\includegraphics[scale=0.23]{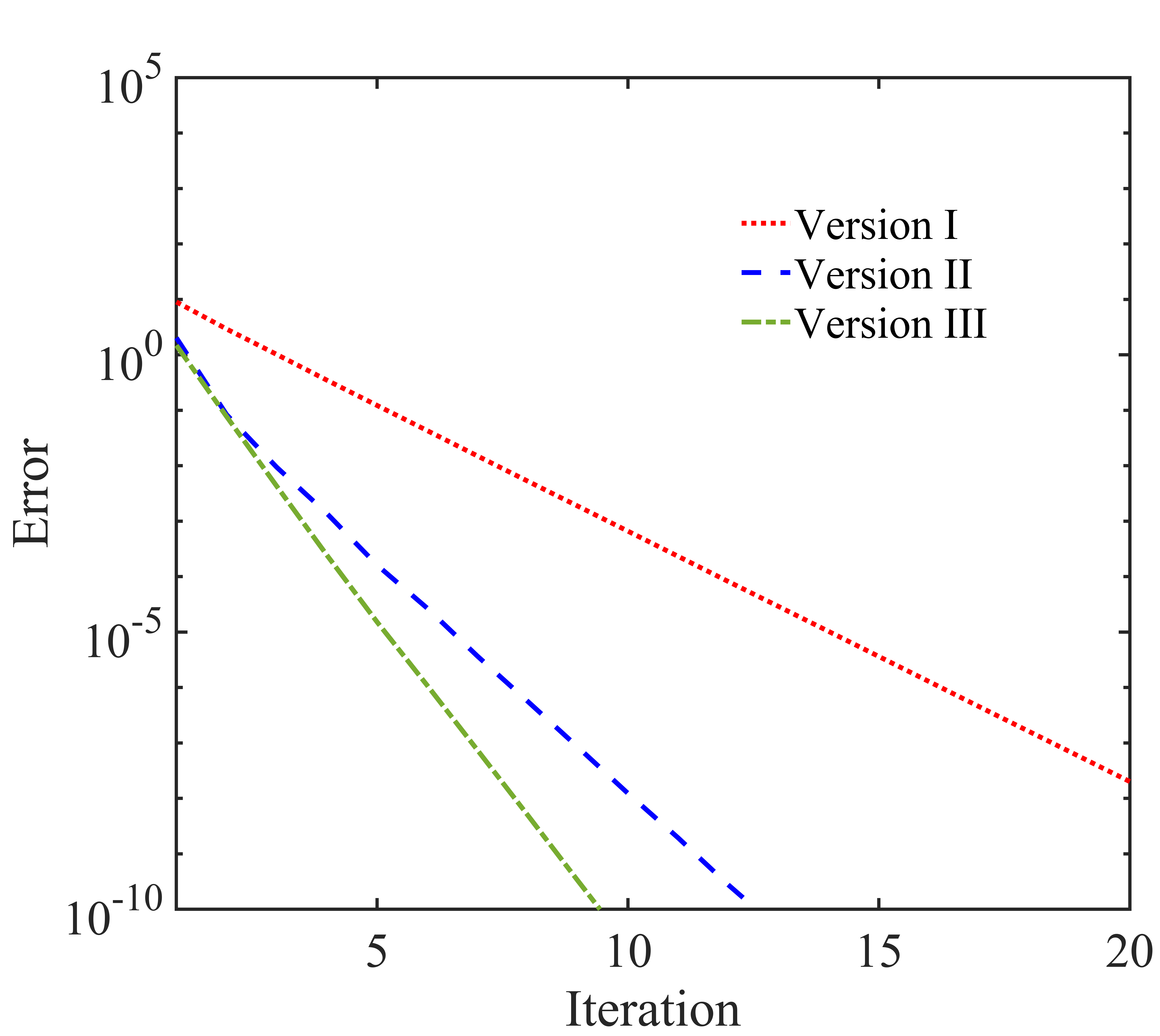}
\caption{Convergence behavior of the three local transmission conditions with a mesh size $\Delta x = \frac{1}{40}$ and a time step $\Delta t=\frac{1}{40}$. Left: $\frac{\nu_1}{\nu_2}=10$. Right: $\frac{\nu_1}{\nu_2}=10^2$.}
\label{fig:2mu}
\end{figure}
We observe that the convergence behavior of Version II and III are slightly better than that of Version I in the case $\mu^2 = 10$, as shown in Figure~\ref{fig:2mu} on the left. However, for the ratio $\mu^2 =10^2$, we observe in Figure~\ref{fig:2mu} on the right that the performance of Version II and III become much better, while Version I becomes less efficient. As expected, the local transmission conditions Version II and Version III are appropriately scaled with respect to both diffusion coefficients $\nu_1$ and $\nu_2$, and thus perform better; but Version I is only scaled with respect to one diffusion coefficient $\nu_2$, thus is less robust when the ratio is changed. Overall, the performance of Version III is the best for the two cases tested.

For this test case, we also show in Figure~\ref{fig:rho} the convergence factor $\rho$ as function of the frequency $\tilde \omega$ of these three versions. Similarly, we observe that Version III yields a much smaller convergence factor compared to the other two versions, which also confirms the convergence behavior observed in Figure~\ref{fig:2mu}.
\begin{figure}
\centering
\includegraphics[scale=0.15]{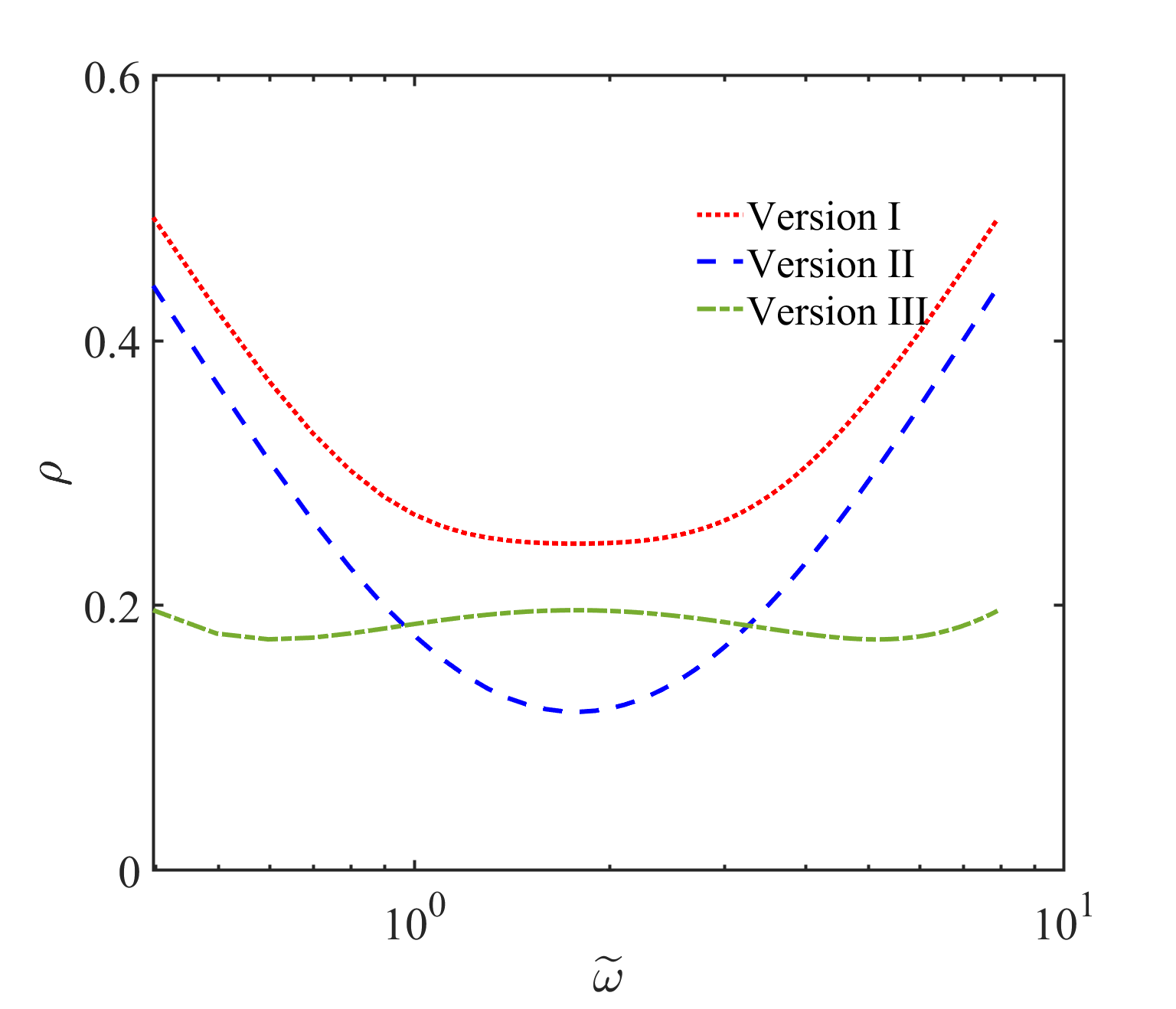}
\includegraphics[scale=0.15]{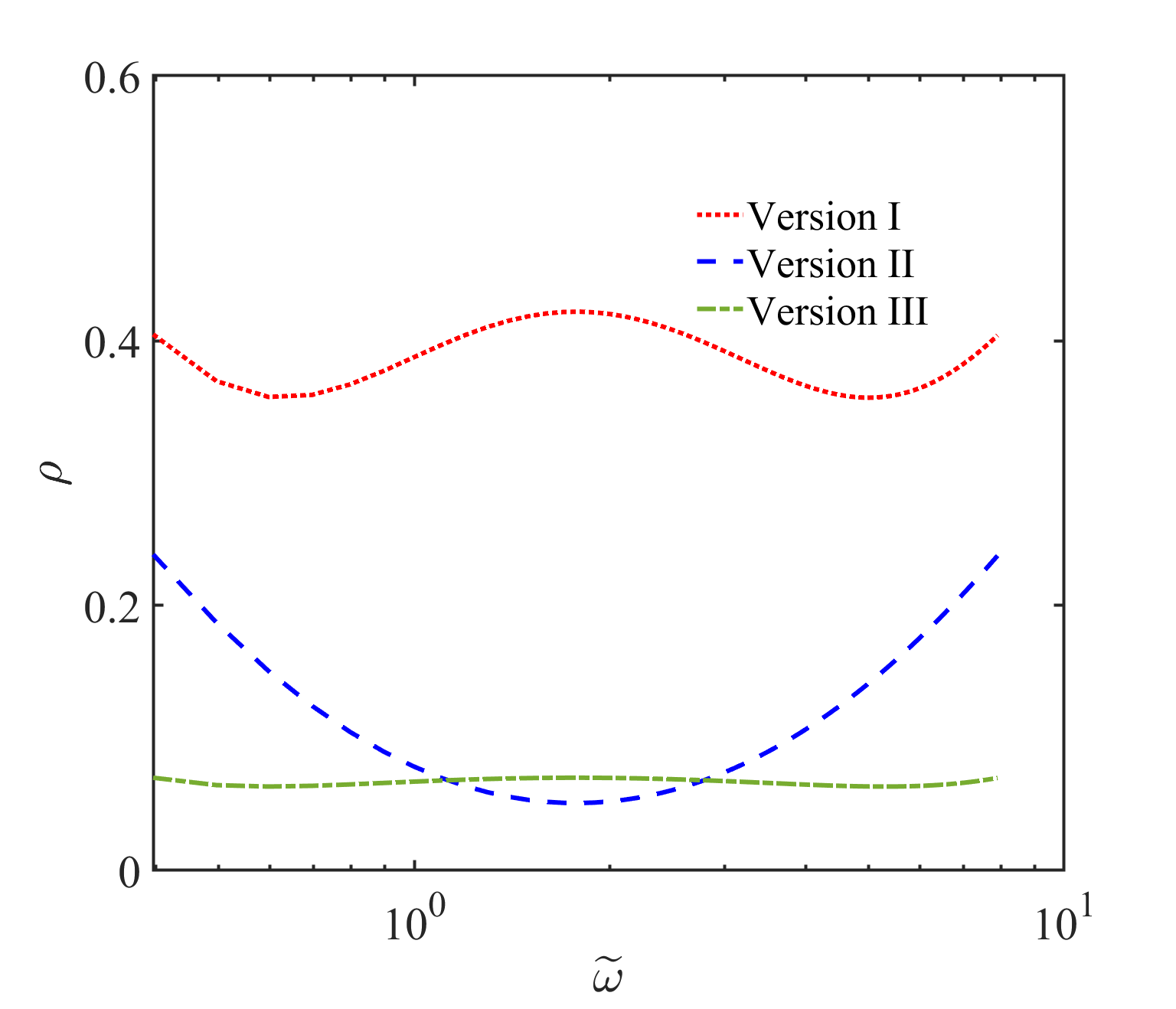}
\caption{Comparison of the convergence factor $\rho$ with respect to the frequency $\tilde \omega$ for all three versions. Left: $\frac{\nu_1}{\nu_2}=10$. Right: $\frac{\nu_1}{\nu_2}=10^2$.}
\label{fig:rho}
\end{figure}

To get better insights into the impact of the ratio, we keep the mesh size $\Delta x = 1 / 40$ and the time step $\Delta t = 1 / 40$ and vary the diffusion coefficients ratio $\mu^2$. Table~\ref{tab:4ratio} shows the number of iterations needed to reach a tolerance of $10^{-8}$ for the three versions when the diffusion coefficient ratio increases. 
\begin{table}\label{tab:4ratio}
\centering
\caption{Number of iterations to reach a tolerance of $10^{-8}$ for four ratios $\frac{\nu_1}{\nu_2}$. }
\begin{tabular}{|c|c|c|c|}
\hline
$\mu^2=\frac{\nu_1}{\nu_2}$ & Version I & Version II & Version III \\ 
\hline
$10^1$ & 15 & 14 & 13 \\ 
$10^2$ & 21 & 11 & 8 \\ 
$10^3$ & 39 & 9 & 6 \\ 
$10^4$ & 169 & 9 & 6 \\ 
\hline
\end{tabular}
\end{table}
We observe once again that the convergence behavior of Version II and III is better than Version I. In particular, as Version I is only scaled with respect to $\nu_2$ for both local transmission parameters, that is, $\sigma_1=\sigma_2=\sqrt{\nu_2}p$, thus when the ratio $\mu$ increases, they cannot take into account this change accordingly in each subdomain, and become much worse for large ratios. On the contrary, both Version II and III are scaled with respect to two diffusion coefficients $\nu_1$ and $\nu_2$, thus able to handle much easier when changing the coefficient ratio. They become even much more efficient and robust for a large coefficient ratio $\mu$. Among all, Version III outperforms the others for all tested cases in Table~\ref{tab:4ratio}.

\subsection{Influence of the time step $\Delta t$}
Next, we test the impact of the time step $\Delta t$, which will influence the high frequency value $\omega_{\max} =\pi / \Delta t$, thus changes the range of the frequency $\omega$. We keep the same mesh size $\Delta x = 1 / 40$ and consider two different diffusion ratios $\frac{\nu_1}{\nu_2} = 10$ and $\frac{\nu_1}{\nu_2} = 10^3$. We investigate here the impact of the time step $\Delta t$ in both two cases. The convergence behavior for the four different time steps $\Delta t= [\frac{1}{20}, \frac{1}{40}, \frac{1}{80}, \frac{1}{160}]$ is illustrated in Figure~\ref{fig:dt}. 
\begin{figure}
\begin{center}
\includegraphics[scale=0.15]{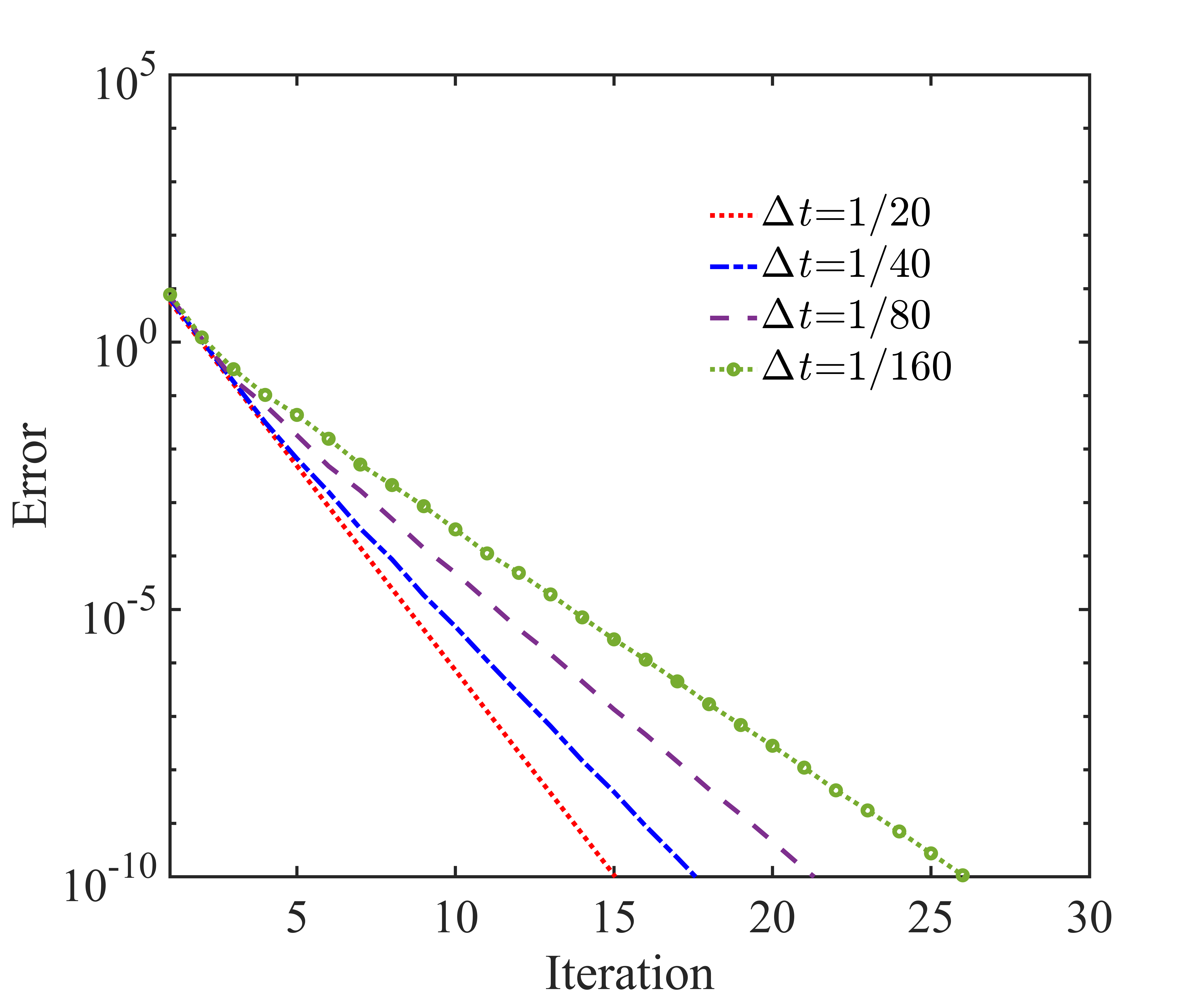}
\includegraphics[scale=0.15]{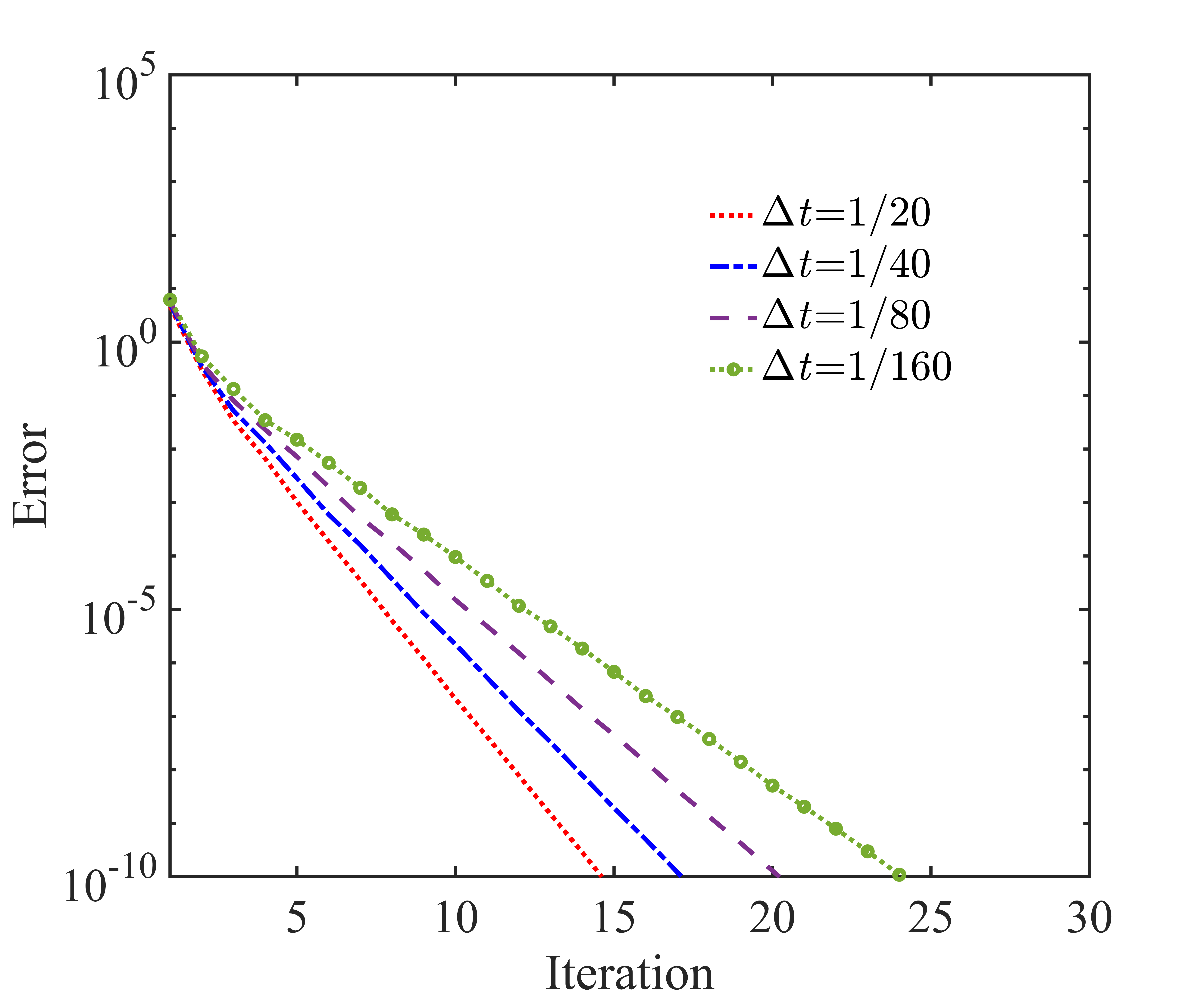} 
\includegraphics[scale=0.15]{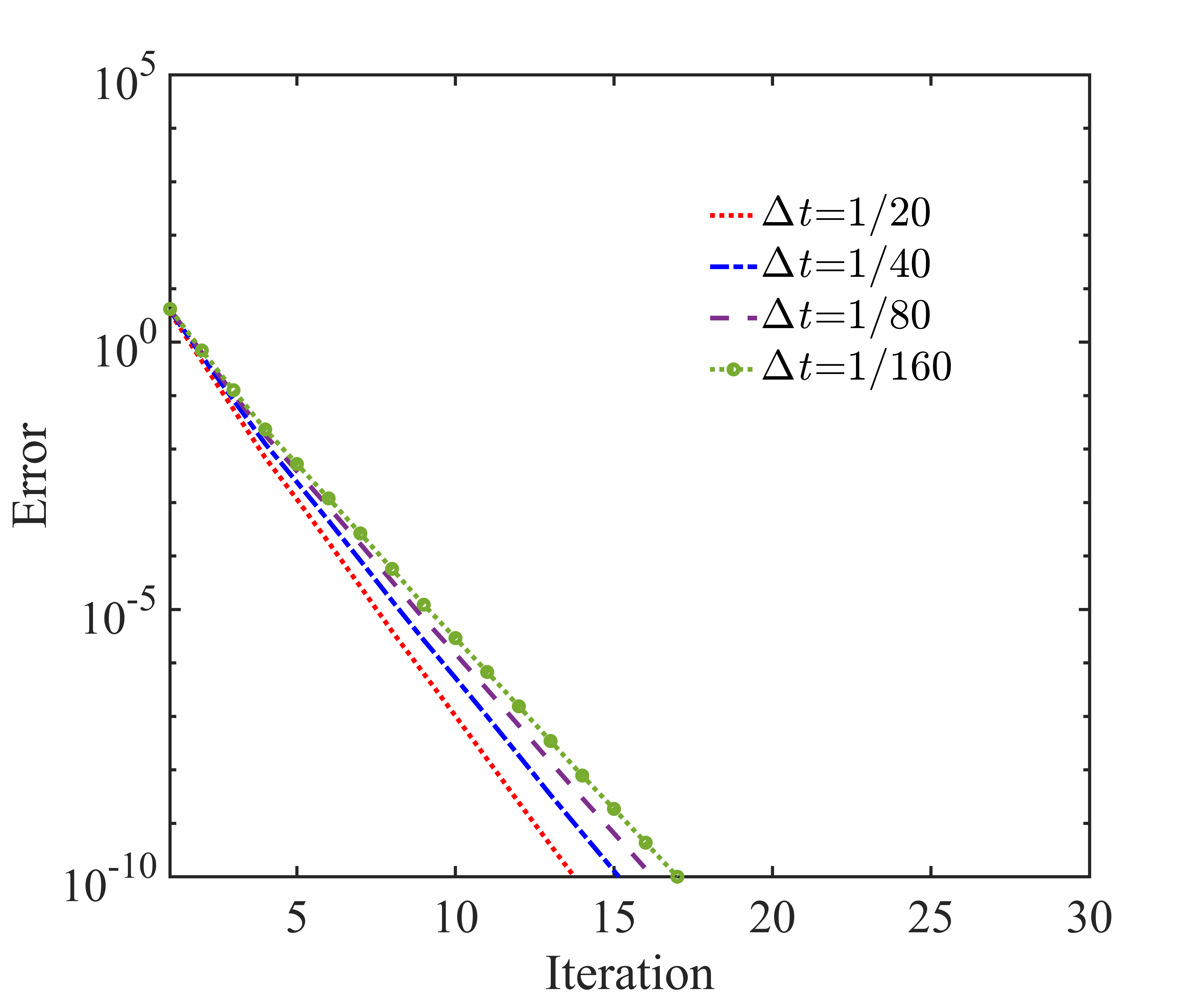} 
\includegraphics[scale=0.15]{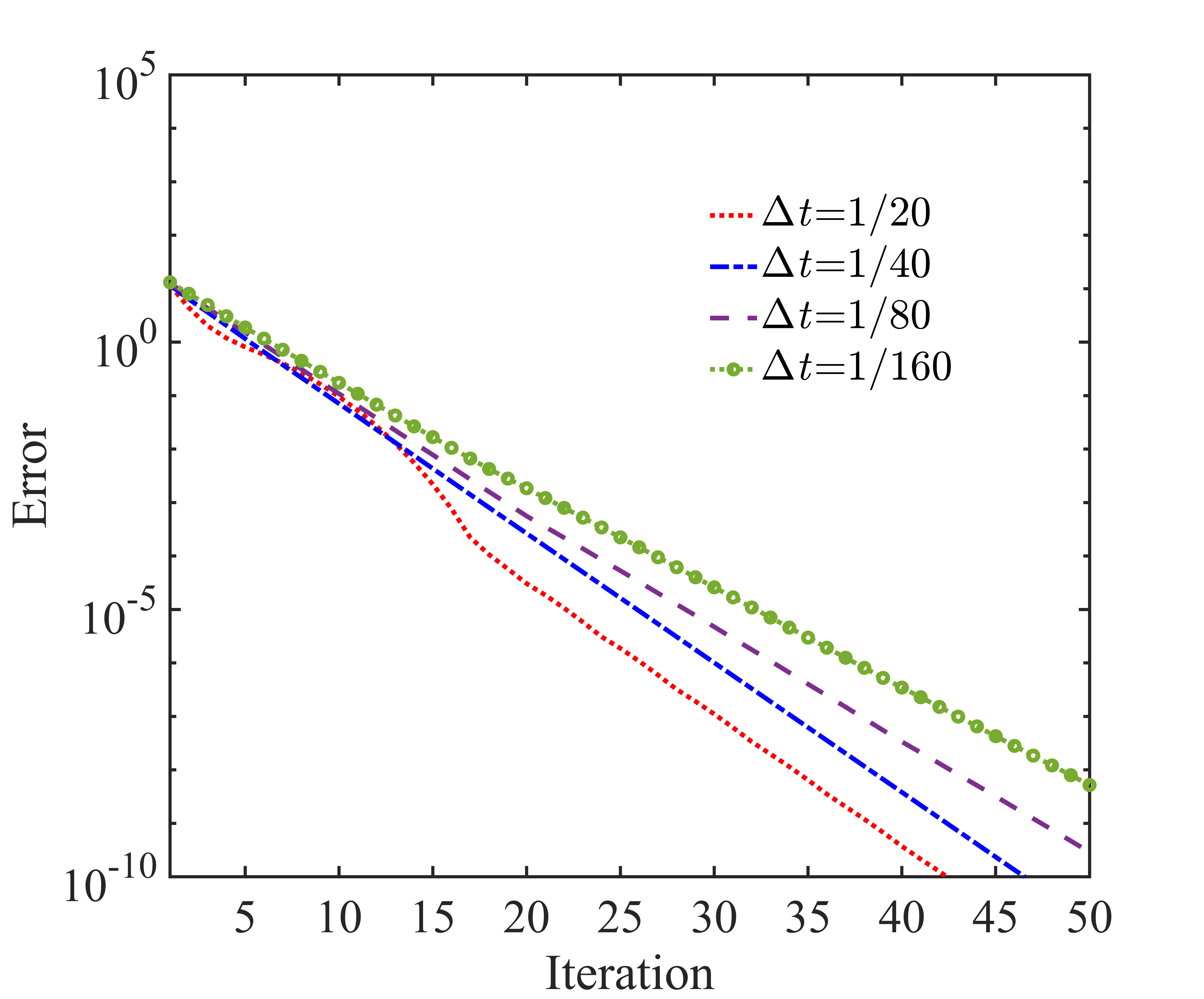}
\includegraphics[scale=0.15]{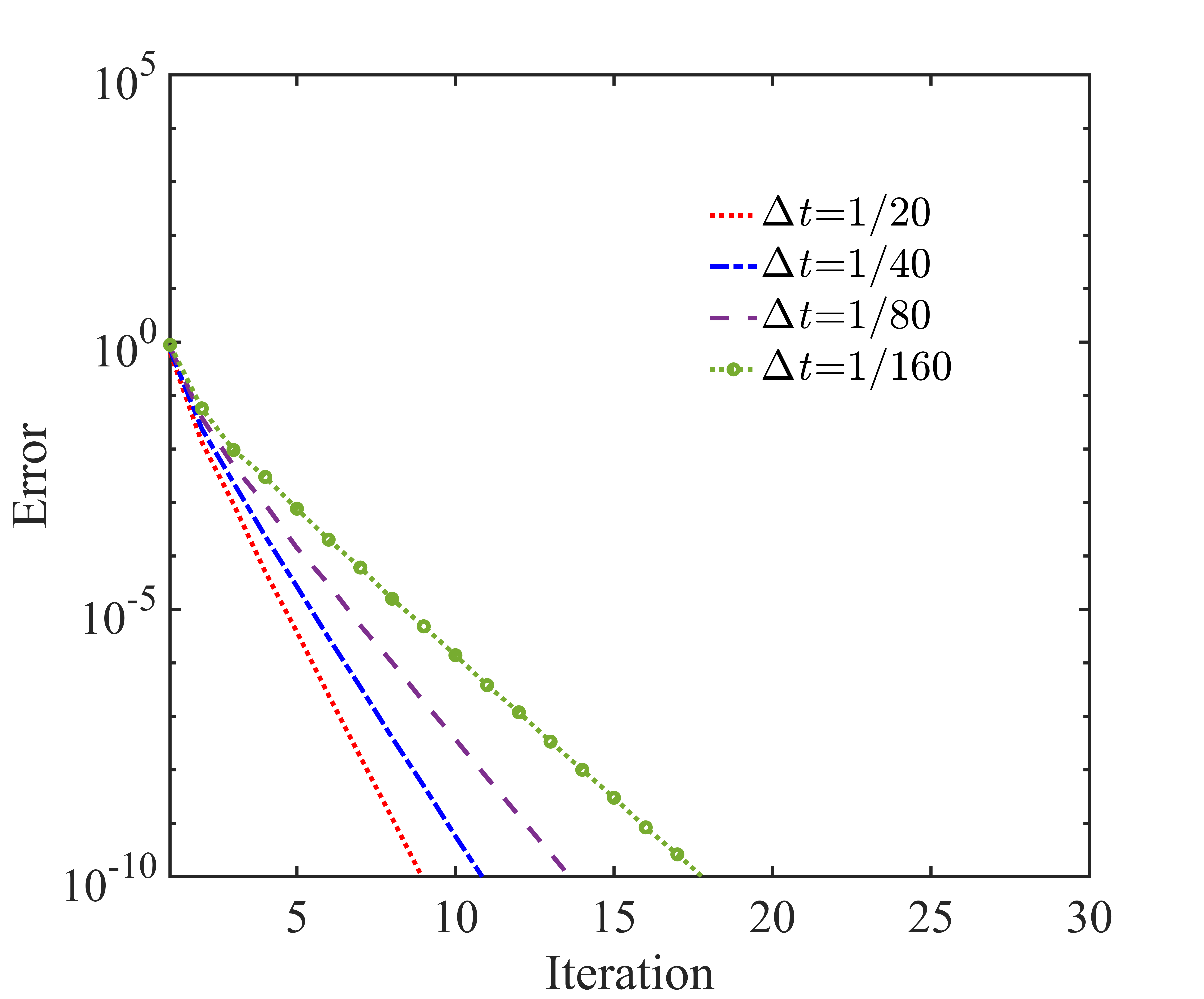} 
\includegraphics[scale=0.15]{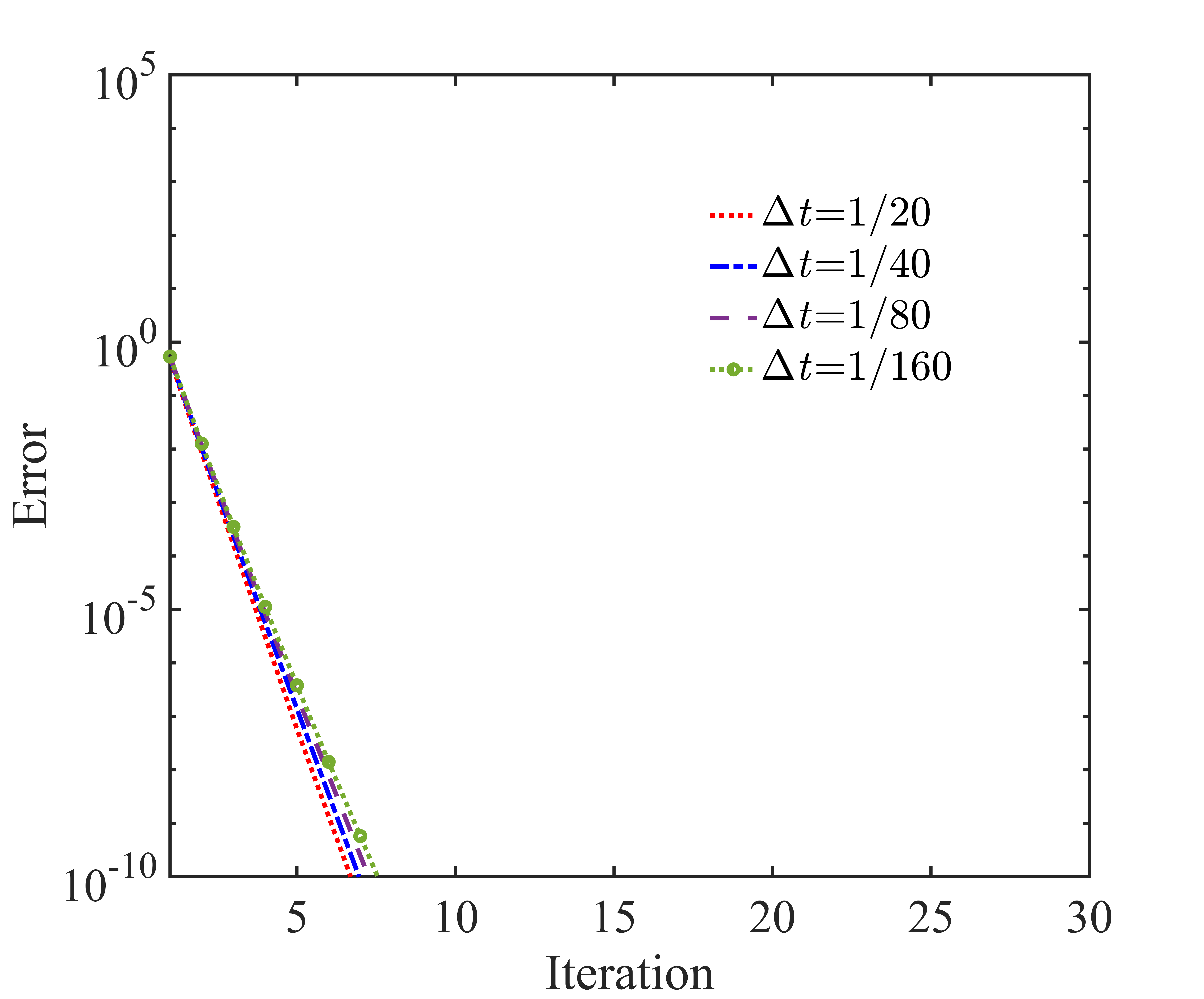} 
\end{center}
\caption{Convergence behavior of the three local transmission conditions with a given mesh size $\Delta x = \frac{1}{40}$ and four different time steps $\Delta t= [\frac{1}{20}, \frac{1}{40}, \frac{1}{80}, \frac{1}{160}]$. Top: $\frac{\nu_1}{\nu_2}=10$. Bottom: $\frac{\nu_1}{\nu_2}=10^3$. Left: Version I. Middle: Version II. Right: Version III. }
\label{fig:dt}
\end{figure}
Generally speaking, we observe that the convergence becomes less efficient when the time step $\Delta t$ decreases. In particular, the convergence of Version I and II deteriorates for small time step as shown in Figure~\ref{fig:dt} on the left and in the middle, whereas the performance of Version III varies very little when decreasing the time step especially for large diffusion ratio. Among all the tested cases, the convergence of Version III is more stable as shown in Figure~\ref{fig:dt} on the right.

\subsection{Influence of the mesh size $\Delta x$}
In a similar way, we test now the impact of the mesh size $\Delta x$ in the case of a relatively small ratio $\frac{\nu_1}{\nu_2} = 10$ and a large ratio $\frac{\nu_1}{\nu_2} = 10^3$. We keep the time step $\Delta t = 1 / 40$ and show in Figure~\ref{fig:dx} the convergence behavior for the three different mesh sizes $\Delta x = [\frac{1}{20}, \frac{1}{40}, \frac{1}{80}]$. 
\begin{figure}
\begin{center}
\includegraphics[scale=0.15]{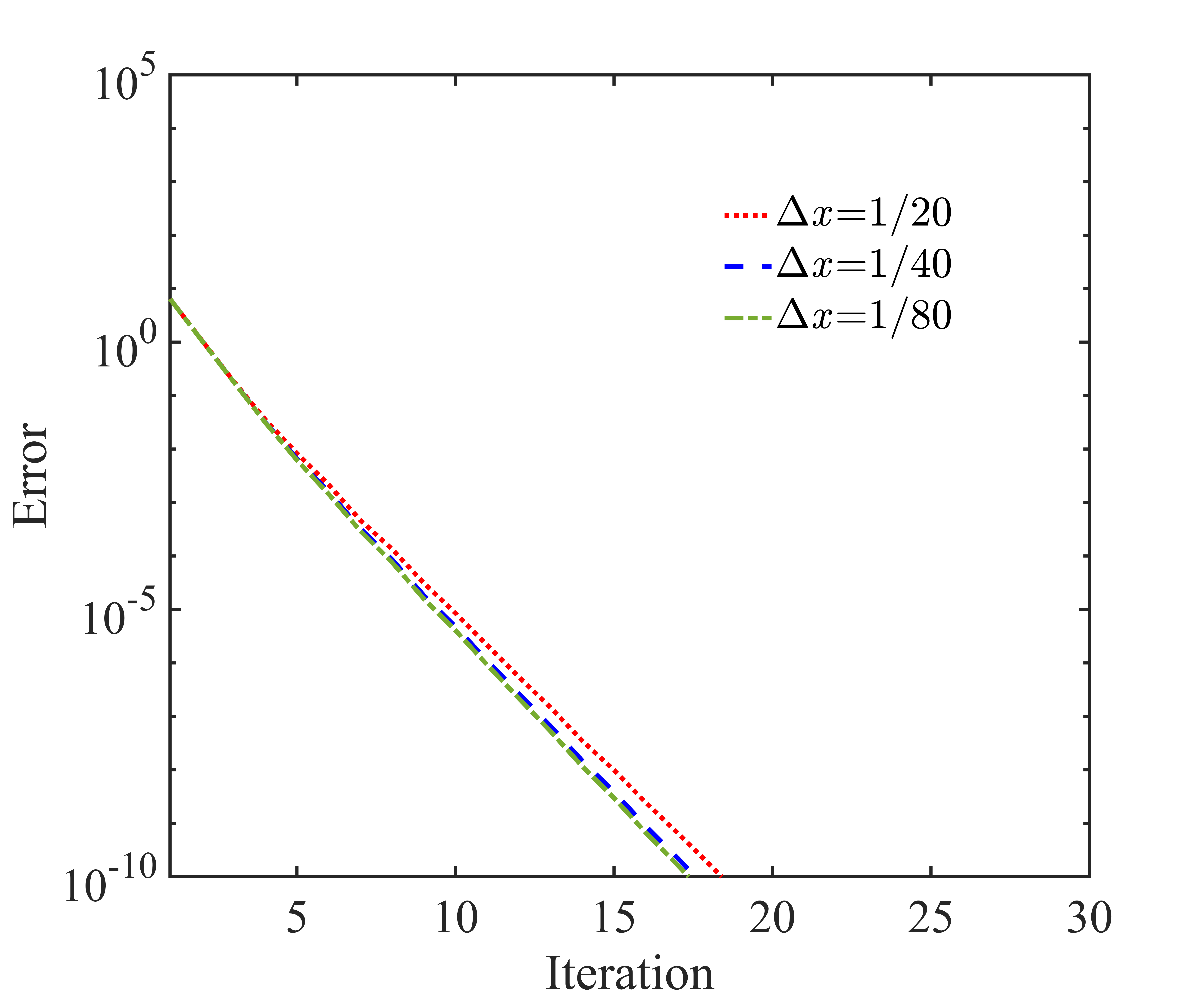}
\includegraphics[scale=0.15]{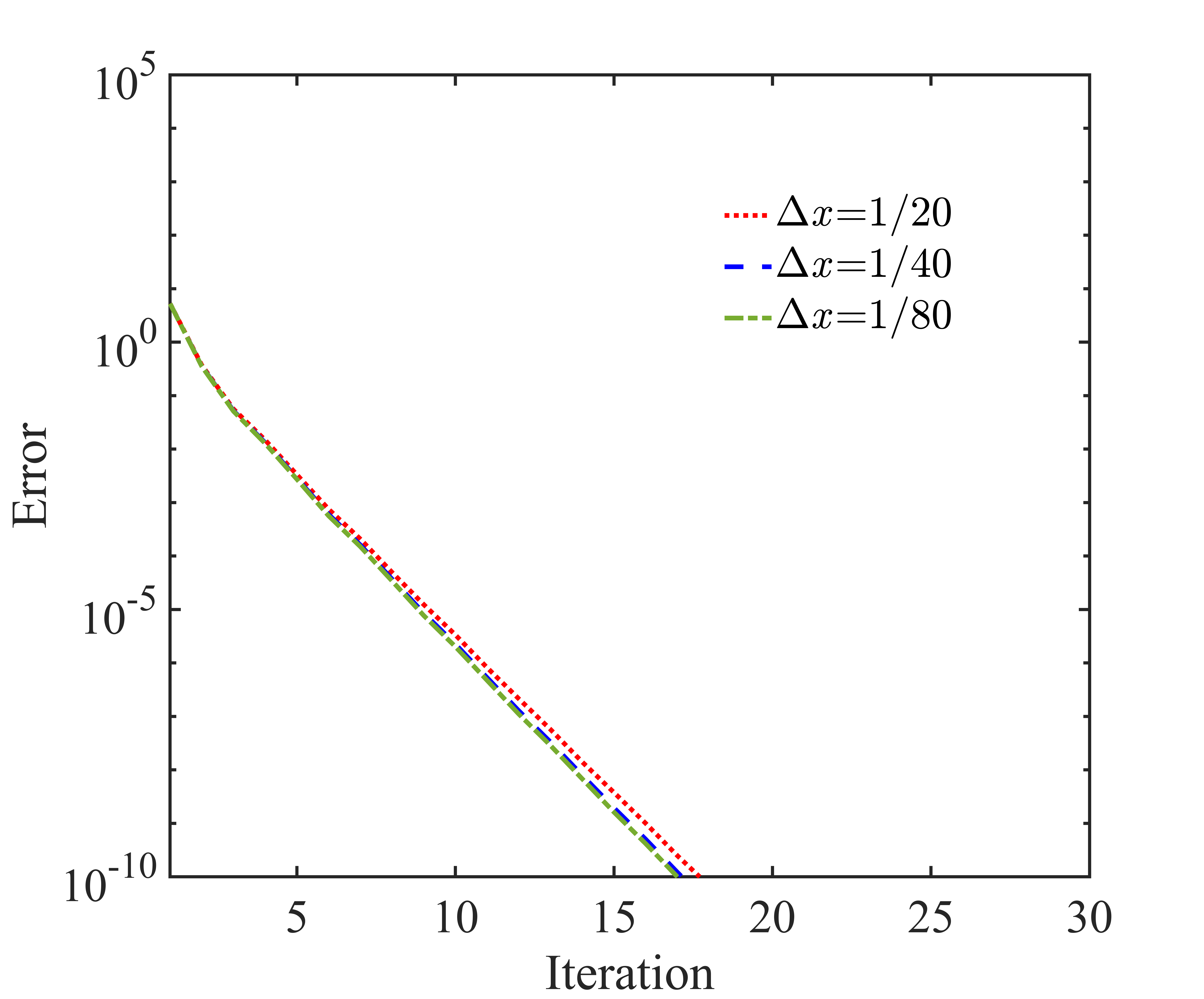} 
\includegraphics[scale=0.15]{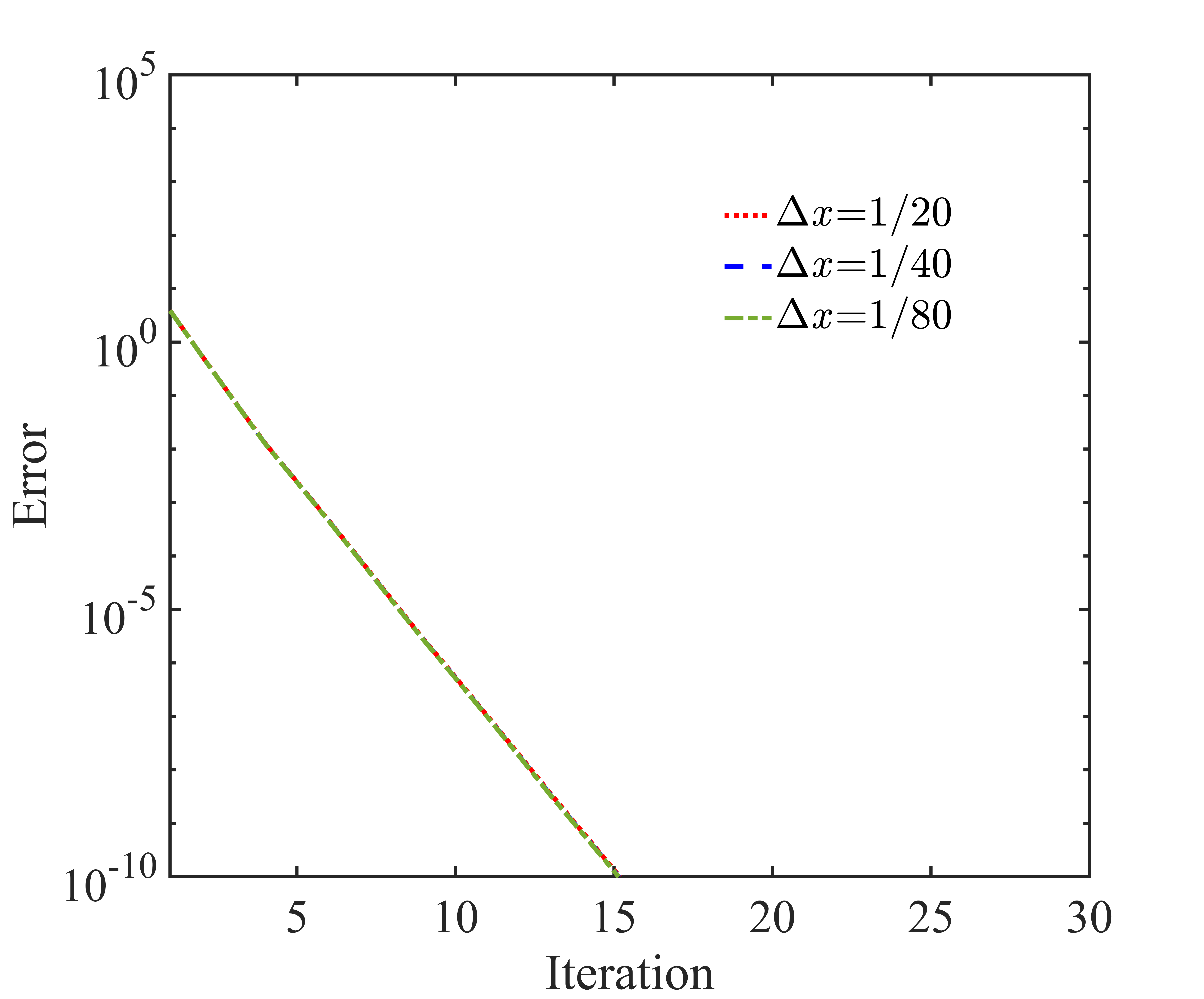}
\includegraphics[scale=0.15]{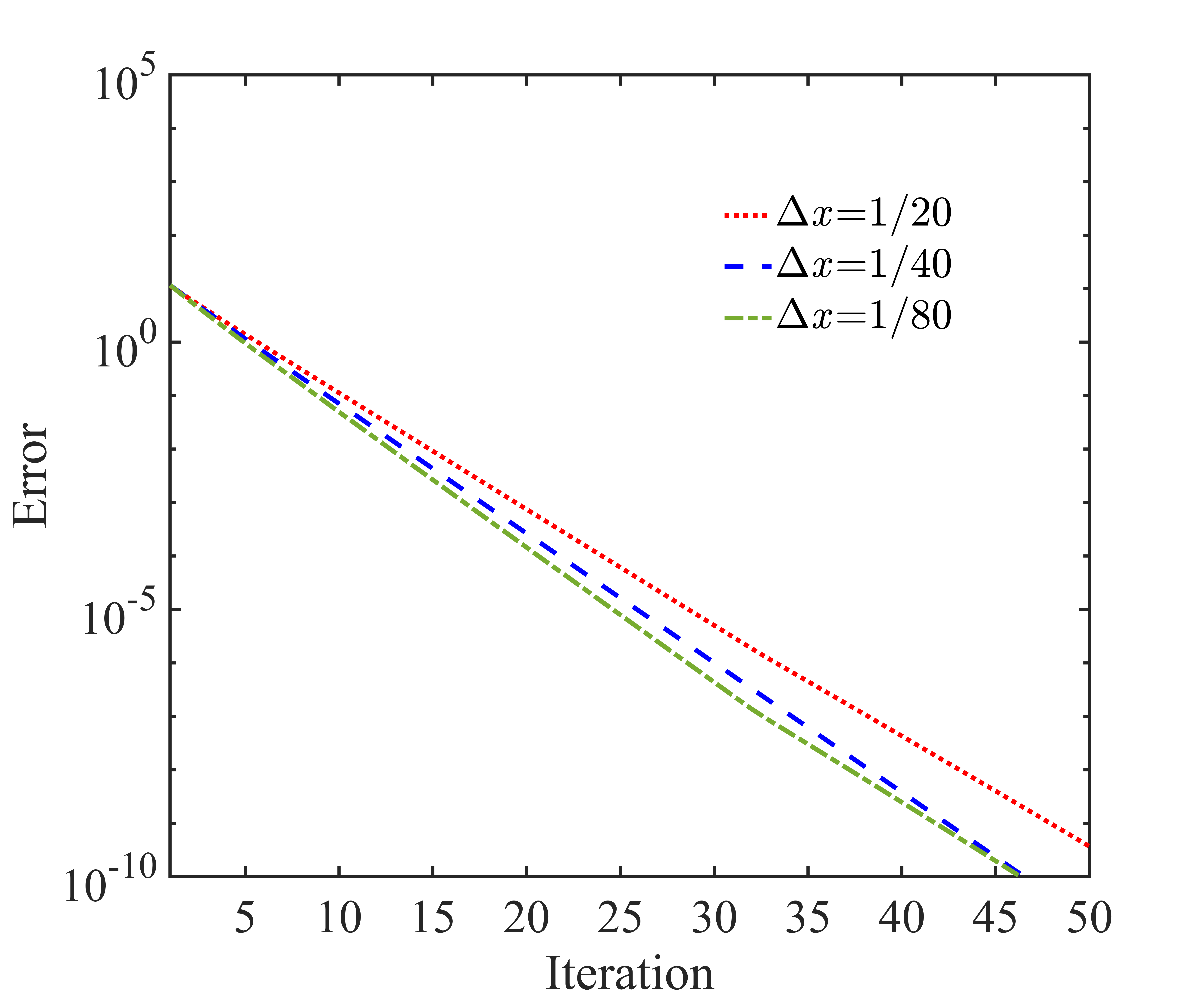}
\includegraphics[scale=0.15]{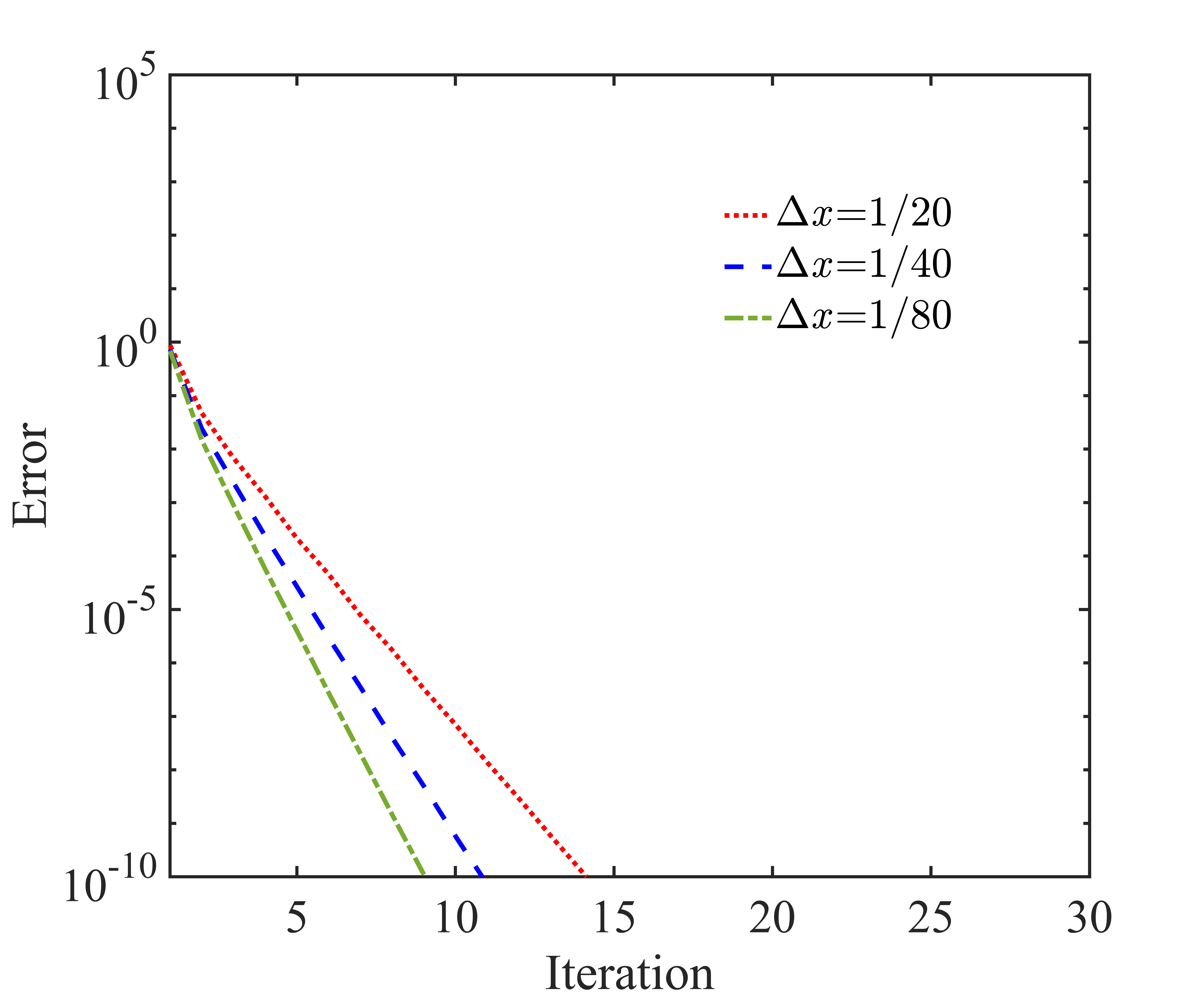} 
\includegraphics[scale=0.15]{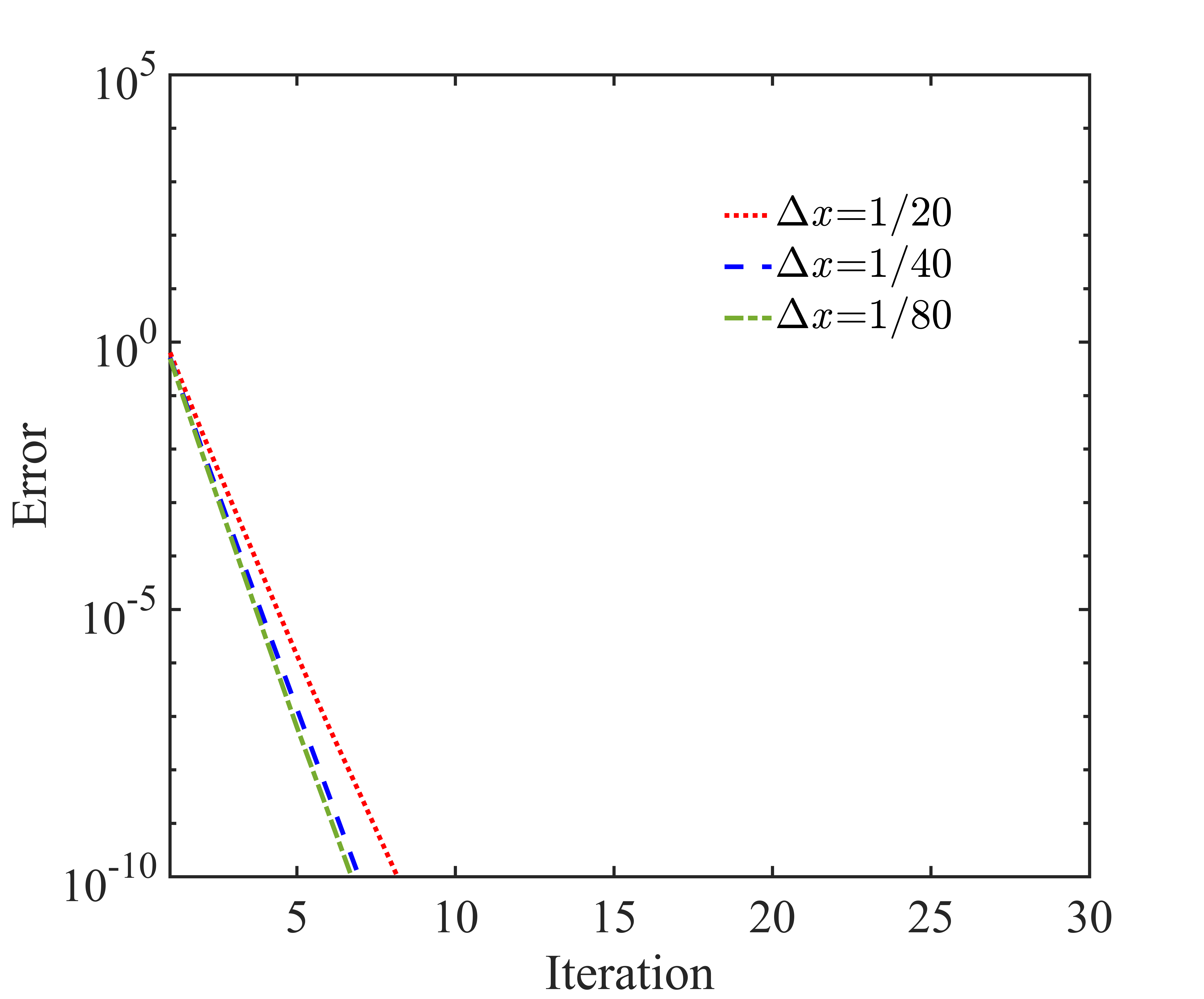} 
\end{center}
\caption{Convergence behavior of the three local transmission conditions with a given time step $\Delta t = \frac{1}{40}$ and three different mesh size $\Delta x = [\frac{1}{20}, \frac{1}{40}, \frac{1}{80}]$.  Top: $\frac{\nu_1}{\nu_2}=10$. Bottom: $\frac{\nu_1}{\nu_2}=10^3$. Left: Version I. Middle: Version II. Right: Version III. }
\label{fig:dx}
\end{figure}
Compared with the impact of the time steps, the impact of the mesh size for all three versions is relatively small, especially for the diffusion ratio $\frac{\nu_1}{\nu_2} = 10$ as shown in Figure~\ref{fig:dx} on the top. As for the ratio $\frac{\nu_1}{\nu_2} = 10^3$, we observe in Figure~\ref{fig:dx} at the bottom that the performance of all three versions is slightly improved for small mesh size in contrast to when $\Delta t$ becomes small; and once again, the convergence of Version III is more stable among all tested cases as shown in Figure~\ref{fig:dx} on the right.

\subsection{Application to thermal protection systems simulation}

To generalize our studies to practical applications, we now provide a numerical investigation of the thermal protection structure presented in Figure~\ref{fig:illustration} in a one-dimensional framework. Based on the three-layer structure of the materials, we consider a natural asymmetric decomposition with three subdomains,
\[\Omega_{1}=(0,\frac{1}{5}),
\quad 
\Omega_{2}=(\frac{1}{5},\frac{2}{5}), 
\quad
\Omega_{3}=(\frac{2}{5},1),\]
with $\Omega_{1}$ the metallic skin, $\Omega_{2}$ the strain isolation pad, and $\Omega_{3}$ the thermal insulation material. In order to imitate differences in the heat diffusion coefficient between different materials, the heat diffusion coefficients of these three subdomains are set to $1$, ${10}^{-2}$, and ${10}^{-3}$, respectively. In practice, the external temperature of the thermal insulation materials is high. Hence, to account for this, we take the Dirichlet boundary conditions $g_3=50$ at $x=1$ in $\Omega_{3}$ and $g_1=0$ at $x=0$ in $\Omega_{1}$. We set the mesh size $\Delta x = 1/100$, the time step $\Delta t=1/40$ and keep the same initial condition $u_0 = 20$.

The solution of the heat distribution is illustrated in Figure~\ref{fig:xt} on the left. 
\begin{figure}
\begin{center}
\includegraphics[scale=0.4]{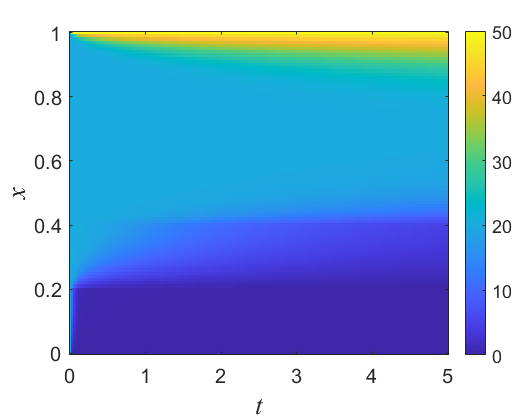} 
\includegraphics[scale=0.12]{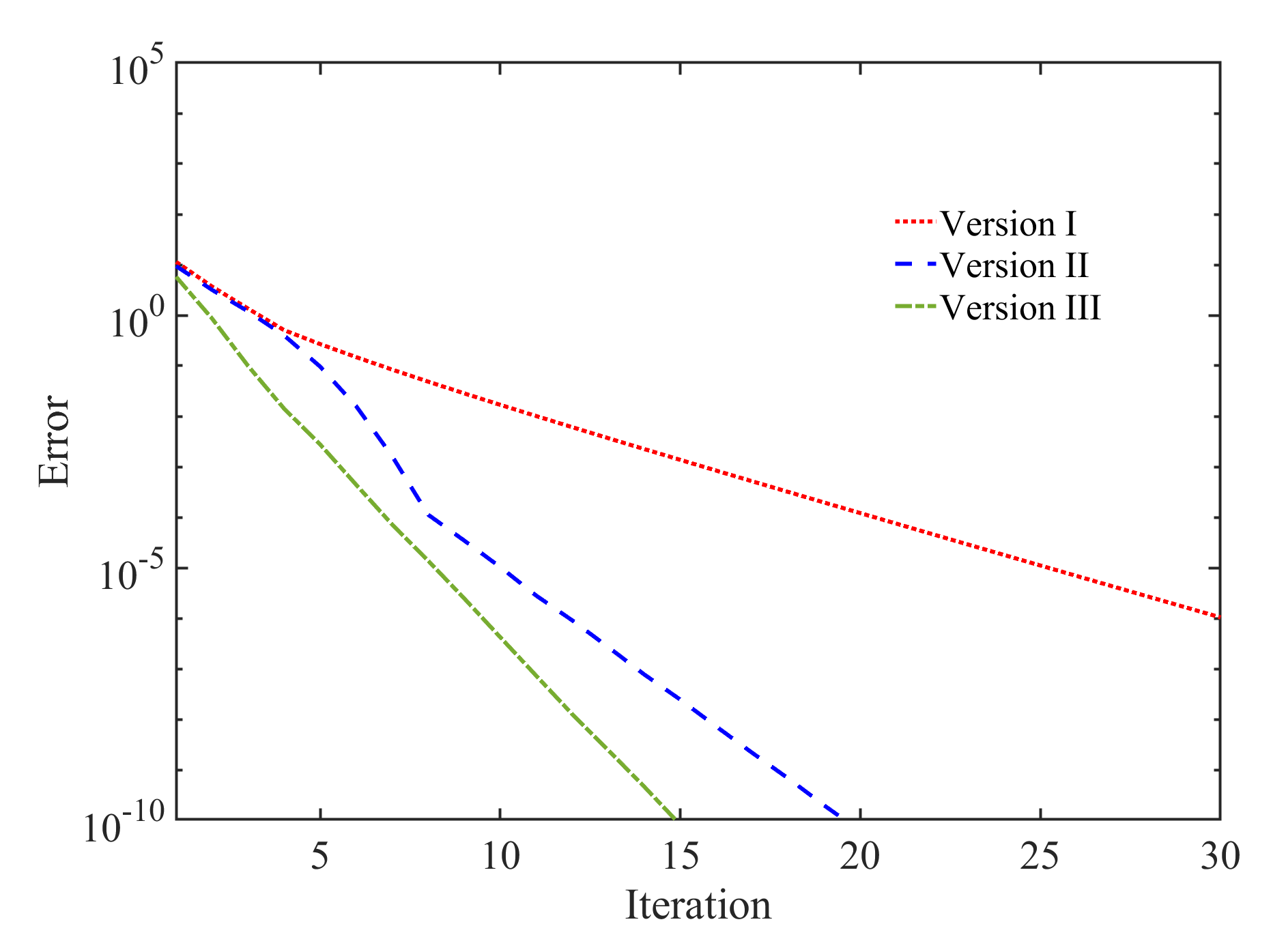}
\end{center}
\caption{Solution of the heat distribution within a thermal protection structure (Left) and convergence behavior of the three local transmission conditions with three asymmetric subdomains (Right). }
\label{fig:xt}
\end{figure}
Compared to the behavior in $\Omega_{2}$ and $\Omega_{3}$, we observe that the heat diffuses quite fast in $\Omega_{1}$ and goes rapidly to 0. However, since the heat diffusion coefficient is rather small in $\Omega_3$, it well prevents the high temperature at $x = 1$ from passing through the thermal insulation material. Furthermore, the convergence behavior of the three local transmission conditions is also presented in Figure~\ref{fig:xt} on the right. In this case with asymmetric subdomains, we observe that the convergence behavior of Versions II and III are much better than that of Version I, and Version III is the best among them. This is consistent with our previous numerical experiments, and shows that our analytical results for the two-subdomain case can provide appropriate local transmission conditions to accelerate the simulation of more general heat transfer problems within typical thermal protection structures.

\section{Conclusion}\label{sec:5}
We analyzed at the continuous level the optimized Schwarz method applied to heat transfer problems with discontinuous diffusion coefficients. We considered two nonoverlapping subdomains and optimized the transmission conditions to accelerate the convergence of the iteration. To obtain good local approximations of the transmission parameters, three local transmission parameters were studied. By solving the min-max problem associated with each transmission condition, we obtained analytical formulas for the optimized transmission parameters. These analyses can also be extended to higher dimension by using Fourier techniques, following techniques for the constant coefficient case in~\cite{Bennequin2016}. Numerical examples demonstrated that the optimized transmission conditions with an appropriate scaling are very effective and stable, and provide better convergence when the diffusion coefficient has a large discontinuity. However, the performance of all three local transmission conditions becomes rather similar when the discontinuity becomes small. In addition, we also observe in our numerical experiments that both the mesh size and the time step can influence the convergence, especially when the transmission parameters are not well scaled with respect to the diffusion coefficients. To better understand the dependency of the convergence on the mesh size and the time step, one needs to analyze the optimized Schwarz method of the discrete level in the time and space directions for such heat transfer problems. From a practical viewpoint, we showed that Version III can be used to obtain effective and robust transmission conditions to solve heat transfer problems with heterogeneous diffusion coefficients. Moreover, the numerical experiment with asymmetric decomposition and multiple subdomains also reveals the potential of the present method for realistic thermal protection structures.

\bibliographystyle{plain}
\bibliography{ref}
\end{document}